\documentclass[12pt,a4paper]{amsart}
\usepackage{amssymb,amscd}
\usepackage{amsfonts}
\usepackage[top=35mm, bottom=35mm, left=30mm, right=30mm]{geometry}
\usepackage[colorlinks=true,citecolor=blue]{hyperref}
\usepackage{mathptmx}
\usepackage{eucal}
\usepackage{graphicx}
\usepackage{mathrsfs}
\usepackage{amssymb}
\usepackage{amsmath}
\usepackage{amsthm}
\usepackage{xcolor}
\usepackage[pagewise]{lineno}\nolinenumbers

\newtheorem{theorem}{Theorem}[section]
\newtheorem{proposition}[theorem]{Proposition}
\newtheorem{lemma}[theorem]{Lemma}
\newtheorem{corollary}[theorem]{Corollary}

\newtheorem*{open question}{Open Question}

\newtheorem*{claim}{Claim}

\theoremstyle{definition}
\newtheorem{definition}[theorem]{Definition}

\numberwithin{equation}{section}

\makeatletter

\newcommand{\Rmnum}[1]{\expandafter\@slowromancap\romannumeral #1@}
\makeatother

\begin{document}
\title{Katok's intermediate entropy conjecture for amenable group actions}
\author{Yage Liu, Ercai Chen and Xiaoyao Zhou*}
\address
{1. School of Mathematical Sciences,\small Ministry of Education Key Laboratory of NSLSCS\\	\small  Nanjing Normal University, Nanjing 210023, Jiangsu, P. R. China}
\email{liuyage16@163.com }
\email{ecchen@njnu.edu.cn}
\email{zhouxiaoyaodeyouxian@126.com}
\renewcommand{\thefootnote}{}
\footnotetext{*Corresponding author: zhouxiaoyaodeyouxian@126.com}                 
\subjclass[2020]{37A35; 37B05; 37B65; 37B40; 37C40; 37C50}
\keywords{amenable group actions; intermediate entropy; entropy dense; specification property; asymptotically entropy expansive. }
\renewcommand{\thefootnote}{\arabic{footnote}}
\begin{abstract}
In this paper, we investigate a broad class of systems for
which Katok's intermediate entropy conjecture holds. In
particular, we show that a dynamical system with an amenable
group action satisfies Katok's intermediate entropy conjecture
provided that it has the specification property and is
asymptotically entropy expansive.
\end{abstract}
\maketitle
\section{Introduction}
Whether positive topological entropy implies a rich structure of the space of invariant measures is a long-standing problem. Parry asked if every strictly ergodic system must have zero entropy. $C^0$ counterexamples (e.g.~\cite{BCL07,GW94,HK67}) show it is not always true. However, one may expect a positive answer for smooth systems,
as conjectured by Herman, since in this setting positive
topological entropy implies the existence of nonzero Lyapunov
exponents, from which some hyperbolic structure may be obtained. Katok~\cite{K80} proved for $C^{1+\alpha}$ surface diffeomorphisms that positive entropy implies horseshoes, hence ergodic measures with all intermediate metric entropies, and conjectured the same for any sufficiently regular smooth system in any dimension. 

\textbf{Katok's intermediate entropy conjecture} (Katok's conjecture for short) asserts that for any $C^2$ diffeomorphism $T$ on a compact Riemannian manifold $X$, the collection of ergodic entropies
$$\mathcal{H}(X, T) := \{ h_\mu(T) : \mu \in \mathcal{M}_e(X,T) \}$$
contains $[0, h(T))$.

Over the years, Katok's conjecture has been verified in a diverse range of dynamical settings. Sun's works~\cite{S10a,S10b,S12} made progress on the problem for
certain skew-product systems and linear toral automorphisms. In
particular, Sun~\cite{S12} established density of ergodic entropies
for all linear toral automorphisms. A breakthrough came when Quas and Soo~\cite{QS16} proved that any asymptotically entropy expansive system enjoying the almost weak specification property together with the small boundary property is universal. The intermediate entropy property then follows directly.
Subsequent work by Burguet~\cite{B20} and Chandgotia and
Meyerovitch~\cite{CM21} further developed this universality
approach in broader settings, including systems satisfying
weaker specification-type assumptions. Working independently, Guan, Sun and Wu~\cite{GSW17} showed that certain homogeneous systems possess the almost weak specification property, thus confirming Katok’s conjecture within the universality framework.
It should be noted that the approximate product property alone is too weak to guarantee universality. The ideas of Quas and Soo also played an important role in the work of Huang, Xu and Xu~\cite{HXX21}, who established the
conjecture for affine transformations of nilmanifolds having a
periodic point and, in particular, for quasi-hyperbolic affine
transformations.

Alongside the developments discussed above, researchers have also taken different paths. Ures~\cite{U12} succeeded in proving Katok's conjecture for a specific class of partially hyperbolic diffeomorphisms whose center is one-dimensional. Yang and Zhang~\cite{YZ20} studied
a broad class of robustly non-hyperbolic transitive
diffeomorphisms and showed that every ergodic measure can be
approximated, both in the weak-$*$ topology and in entropy, by
ergodic measures supported on hyperbolic sets, thereby
establishing the conjecture. Konieczny, Kupsa and Kwietniak~\cite{KKK18} treated shift spaces equipped with a safe symbol (all hereditary shifts fall into this category), showing that the set of ergodic invariant measures becomes arcwise connected under the $d$-bar metric, which suffices to prove the conjecture. Li and Oprocha~\cite{LO18} proved Katok's conjecture for
topologically transitive systems with the shadowing property
and upper semi-continuous entropy maps. Their argument is based on approximations by ergodic measures
supported on odometers and on almost one-to-one extensions of
odometers. In a more recent contribution, Sun~\cite{S21} introduced a novel technique relying on the uniqueness of equilibrium states, thus proving the conjecture for a class of Mañé systems. In a later paper~\cite{S25}, Sun offered yet another confirmation of Katok's conjecture by revealing a refined structural property of the invariant measure space: ergodic measures of intermediate entropies and of intermediate pressures are generic in their respective subspaces. More recently, the authors~\cite{LCZ26} established the existence of ergodic
measures of intermediate entropies for $\mathbb Z^d$-actions.

A natural question is whether Katok's conjecture holds for
amenable group actions. To formulate our results, let $G$ be
an infinite countable discrete amenable group. A
$G$-topological dynamical system ($G$-tds for short) $(X,G)$
consists of a compact metric space $X$ (with metric $\rho$) and
a continuous action of $G$ on $X$, i.e., a map
$G\times X\to X$, $(g,x)\mapsto gx$, satisfying
$ex=x$ and $g(hx)=(gh)x$ for all $g,h\in G$ and $x\in X$,
where $e$ is the identity element of $G$. Let
$\mathcal M(X)$, $\mathcal M(X,G)$, and $\mathcal M_e(X,G)$
denote the sets of Borel probability measures on $X$,
$G$-invariant measures, and $G$-invariant ergodic measures,
respectively. Let $h(X,G)$ denote the topological entropy, and,
for $\mu\in\mathcal M(X,G)$, let $h_\mu(X,G)$ denote the
entropy of $\mu$.

A dynamical system is said to have the \textit{intermediate
entropy property} if it satisfies Katok's conjecture. In this
paper, we give an affirmative answer to the above question for
asymptotically entropy expansive amenable group actions with
the specification property. The main step in the proof is the
construction of compact invariant sets whose entropy is
controlled near a prescribed level. We carry out this
construction using zero-entropy exact tilings and employ
symbolic dynamics as a coding tool
(see Proposition~\ref{pro:3.1} and~\cite{DHZ19}).
Furthermore, by combining this construction with asymptotic
entropy expansiveness, we establish the intermediate entropy
property. Our main theorem reads as follows.
\begin{theorem}\label{thm:1.1}
Let $(X,G)$ be an asymptotically entropy expansive system with
the specification property. Then
\[
\mathcal H(X,G)
:=
\left\{
h_\mu(X,G):
\mu\in\mathcal M_e(X,G)
\right\}
=
[0,h(X,G)].
\]
\end{theorem}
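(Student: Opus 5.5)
The plan is to handle the endpoints first and then build, for each intermediate level $0<c<h(X,G)$, a compact invariant subset with entropy close to $c$ from above and below. Among its ergodic measures we then pick one via a semicontinuity/convexity argument.

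\textbf{Endpoints.} Asymptotic entropy expansiveness makes the entropy map $\mu\mapsto h_\mu(X,G)$ upper semicontinuous. Hence a measure of maximal entropy exists, and by ergodic decomposition it may be taken ergodic, so $h(X,G)\in\mathcal H(X,G)$. For $0\in\mathcal H(X,G)$, specification gives points whose orbits have prescribed patterns, and therefore a minimal subsystem. I would instead use a zero-entropy construction via specification along an exact tiling with zero entropy, which produces a subsystem of zero entropy; any ergodic measure on it has entropy $0$.

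\textbf{Construction of subsystems with controlled entropy.} Fix $0<c<h(X,G)$ and $\varepsilon>0$. Choose a F{\o}lner set $F$ and a $(F,\delta)$-separated set $E$ of cardinality about $e^{|F|c}$, taking a subset of a maximal separated set with $|F|^{-1}\log$ of its cardinality near $h(X,G)>c$. Use a zero-entropy exact tiling of $G$ (as in \cite{DHZ19}) whose tile shapes are large F{\o}lner sets, each close to translates of such $F$'s. For each tiling $\mathcal T$ in the tiling system and each assignment of an element of $E_S$ to every tile $S$, specification yields a point shadowing the assigned orbit segments on the tiles, up to gaps of bounded size. Coding by the tiles and the labels, as in Proposition~\ref{pro:3.1}, the closure $Y_{c,\varepsilon}$ of the $G$-orbits of these points is a compact invariant set.

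Its entropy is estimated in two directions. Separation of the labels gives $h(Y_{c,\varepsilon},G)\ge c-\varepsilon$. A symbolic factor map to a subshift over the labels plus the tiling gives an upper bound: the tiling system has zero entropy and the labels contribute at most $c$ per unit of density, and the shadowing error is absorbed by the small-scale entropy, which asymptotic h-expansiveness makes tend to $0$. Together these give $h(Y_{c,\varepsilon},G)\le c+\varepsilon$. This upper bound is the main obstacle. The gap regions of specification and the non-injectivity of shadowing must not create extra entropy, which is why the tiling must have zero entropy and why one needs the tail entropy estimate $h_\mu\le h_\mu(\mathcal P)+h^*$ for partitions of small diameter.

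\textbf{Conclusion.} Let $A_n$ be the set of ergodic measures with entropy in $[c-1/n,c+1/n]$. By the construction together with variational principle and ergodic decomposition on $Y_{c,1/n}$, each $A_n$ contains a measure with entropy at least $c-1/n$ supported on a set of topological entropy at most $c+1/n$, so $A_n$ is nonempty.

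To obtain entropy exactly $c$, I would nest the construction: build the sets $Y_n$ hierarchically so that $Y_{n+1}\subset Y_n$, refining labels within previously chosen blocks. Then $Y=\bigcap_n Y_n$ is compact, invariant and nonempty, with $h(Y,G)\le c$. A lower bound $h(Y,G)\ge c$ is preserved because each stage keeps enough separated branches. Upper semicontinuity then gives an ergodic measure of maximal entropy on $Y$, with entropy exactly $c$.

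If nesting proves hard, the alternative is to take ergodic $\mu_n$ with entropy in $[c,c+1/n]$ and ergodic $\nu_n$ with entropy in $[c-1/n,c]$. These are joined along a path of ergodic measures, constructed through specification so that the measures vary continuously in entropy. Intermediate value along the path then yields entropy exactly $c$.
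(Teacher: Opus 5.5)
Your construction of $Y_{c,\varepsilon}$ is, in outline, the paper's Proposition~\ref{pro:3.1} combined with Proposition~\ref{pro:4.1}: count tiling, label and bad codes at a fixed scale $\gamma$, then use $h(\Lambda,G)\le h(\Lambda,G,\gamma)+h_{\mathrm{loc}}(\gamma,G)$. Your treatment of the value $h(X,G)$ matches the paper.

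\textbf{The gap: passing from entropy in $[c-1/n,c+1/n]$ to entropy exactly $c$.} Nonemptiness of every $A_n$ says nothing about $\bigcap_n A_n$, and the nested family $Y_{n+1}\subset Y_n$ is asserted rather than built. It does not come for free.
\begin{itemize}
\item $Y_n$ has no specification property of its own. Points glued at stage $n+1$ are unconstrained on the gap regions, so they need not lie in $Y_n$.
\item If you nest by merely restricting the admissible label configurations on the stage-$n$ tiling, nesting does hold, but the shadowing precision never improves. The error term $h_{\mathrm{loc}}(\gamma_n,G)$ stays fixed, and nothing forces $h(Y_n)\to c$.
\item If such a family with $c\le h(Y_n)\to c$ were available, the lower bound on $\bigcap_n Y_n$ would follow from upper semicontinuity applied to limits of maximal-entropy measures, not from ``separated branches''.
\end{itemize}
The fallback through a path of ergodic measures is not an argument. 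Entropy is not weak$^*$-continuous, and specification supplies no path along which it is continuous.

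The endpoint $0$ has the same defect. A zero-entropy tiling does not make the shadowing subsystem have zero entropy, because shadowing points are not unique and the gap regions are free. The counting only bounds its entropy by a small positive quantity plus $h_{\mathrm{loc}}(\gamma,G)$. Minimal subsystems can also have positive entropy, so that idea does not give $0$ either.

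\textbf{The missing idea: weak$^*$ localization followed by a Baire category argument.} The paper builds its sets from an ergodic $\mu_0$, using labels whose empirical measures lie near $\mu_0$, so that $\mathcal M(\Lambda,G)\subset B(\mu_0,\eta_0)$. Combined with entropy-denseness (Proposition~\ref{pro:2.15}), this makes every invariant measure entropy-approximable. Then fix $\alpha\in[0,h(X,G))$.
\begin{itemize}
\item $\mathcal M^\alpha(X,G)$ is compact by upper semicontinuity, hence a Baire space.
\item Each $\mathcal M_e(\alpha,\alpha+1/k)$ is a relative $G_\delta$ in it.
\item Each $\mathcal M_e(\alpha,\alpha+1/k)$ is dense. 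Move $\mu\in\mathcal M^\alpha(X,G)$ slightly toward an ergodic measure of entropy $>\alpha$. Take $\Lambda$ close to the perturbed measure with $\alpha<h(\Lambda,G)<\alpha+1/k$, and pick an ergodic measure on $\Lambda$ with entropy in $[\alpha,\alpha+1/k)$.
\end{itemize}
Baire then makes $\mathcal M_e(X,G,\alpha)=\bigcap_k\mathcal M_e(\alpha,\alpha+1/k)$ residual, hence nonempty, including for $\alpha=0$ (Proposition~\ref{pro:4.2}). Your sets $Y_{c,\varepsilon}$ are built from topological entropy alone, and you never control where $\mathcal M(Y_{c,\varepsilon},G)$ lies. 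As they stand, they cannot feed this argument.
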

A subset $\Lambda\subseteq X$ is $G$-invariant if for each $g\in G$, $g\Lambda=\Lambda$. For compact $G$-invariant $\Lambda$, $(\Lambda,G)$ is a topological dynamical system, so $\mathcal{M}(\Lambda,G)$ makes sense. Adapting the ideas of \emph{almost entropy-approximable},
\emph{entropy-approximable} and \emph{entropy-generic}
from \cite[Definition 1.2]{S25}, we define their counterparts
for continuous \(G\)-actions.
\begin{definition}
Let  $(X,G)$ be a topological dynamical system.\\
(1) Given $\mu\in\mathcal{M}\left( X,G\right)$, we say that $\mu$ is \textit{almost entropy-approximable} (by compact invariant sets of intermediate entropies) if for every neighborhood $U$ of $\mu$, every $h\in(0,h_{\mu}(X, G))$ and every $\varepsilon,\beta>0,$ there exist a compact $G$-invariant set $\Lambda$ and $\gamma\in(0,\varepsilon)$ such that 
$$\mathcal{M}(\Lambda,G)\subset U,~~h(\Lambda,G)>h~~and~~h(\Lambda,G,\gamma)<h+\beta,$$
where $h(\Lambda,G,\gamma)$ is the Bowen topological entropy of the subsystem $(\Lambda,G)$ calculated at the scale $\gamma$.\\
(2) Given $\mu\in\mathcal{M}\left( X,G\right)$, we say that $\mu$ is \textit{entropy-approximable} (by compact invariant sets of intermediate entropies) if for every neighborhood $U$ of $\mu$, every $h\in(0,h_{\mu}(X, G))$ and every $\beta>0,$ there is a compact $G$-invariant set $\Lambda$ such that 
$$\mathcal{M}\left(\Lambda,G\right)\subset U~~and~~h<h(\Lambda,G)<h+\beta.$$
(3) We say that the system $(X,G)$ is \textit{entropy-generic} if for every $\alpha\in[0,h(X, G))$, the set 
$$\mathcal{M}_{e}\left( X,G,\alpha\right):=\left\lbrace \mu\in\mathcal{M}_{e}\left( X,G\right): h_{\mu}(X, G)=\alpha\right\rbrace $$
is residual in the subspace
$$\mathcal{M}^{\alpha}\left( X,G\right):=\left\lbrace \mu\in\mathcal{M}\left( X,G\right): h_{\mu}(X, G)\geq\alpha\right\rbrace.$$
\end{definition}
When $h_\mu(X,G)=0$, almost entropy-approximability and
entropy-approximability are vacuous under the preceding
definitions. To avoid this degeneracy, we adopt the following
non-vacuous convention: we say that $\mu$ is almost
entropy-approximable (respectively, entropy-approximable) if,
for every neighborhood $U$ of $\mu$, there exists a nonempty
compact $G$-invariant set $\Lambda\subset X$ such that
$\mathcal M(\Lambda,G)\subset U$.

The following is our key theorem that requires only the specification property.
\begin{theorem}\label{thm:1.3}
Let $(X,G)$ be a system with the specification property. Then every $\mu\in \mathcal{M}\left( X,G\right)$ is almost entropy-approximable. 	
\end{theorem}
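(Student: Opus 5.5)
We reduce to ergodic measures and then build $\Lambda$ as the orbit closure of a set of points that shadow well-chosen orbit pieces along a zero-entropy exact tiling of $G$. Since $\mu\mapsto h_\mu$ is affine, $\mathcal M(X,G)$ is convex, and (under specification) ergodic measures are entropy-dense, we may first replace $\mu$ by a nearby ergodic measure $\nu$. More precisely, we write $\mu$ approximately as a rational convex combination $\sum_i \lambda_i\nu_i$ of ergodic measures with $\sum_i\lambda_i h_{\nu_i}>h$. For each $\nu_i$ we pick a large F\o lner set $F$ and, by the Katok-type entropy formula for amenable actions (Ornstein--Weiss/Shannon--McMillan--Breiman), a $(F,\delta)$-separated family $E_i$ of $\nu_i$-generic orbit segments. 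Each point of $E_i$ has empirical measure $\frac1{|F|}\sum_{g\in F}\delta_{gx}$ close to $\nu_i$, and $|E_i|\approx e^{|F|h_{\nu_i}}$. Thinning the $E_i$ (keeping a subfamily) lets us prescribe the exponential growth rate $\frac{1}{|F|}\log|E|$ in the window $(h,h+\beta/2)$. We do this with the blocks for the $\nu_i$ concatenated in proportions $\lambda_i$, so that the averaged empirical measure stays in $U$.

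Next we invoke Proposition~\ref{pro:3.1} to obtain an exact tiling $\mathcal T$ of $G$ by translates of finitely many large F\o lner shapes $S_1,\dots,S_k$ with zero topological entropy. We may assume the shapes are $\epsilon$-invariant enough to carry the blocks above and to leave room for the specification gaps $K$ (the finite set given by specification at scale $\delta/4$). For every tile $S_jc$ of $\mathcal T$ we assign a choice of an orbit segment from the appropriate family, shifted by $c$. Specification then yields a point $y\in X$ that $\delta/4$-shadows all these assignments simultaneously; compactness handles infinitely many tiles via finite approximations. Let $Y$ be the set of all such shadowing points over all tilings in the orbit closure of $\mathcal T$ and all assignments, and set $\Lambda=\overline{\bigcup_{g}gY}$. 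This is a compact $G$-invariant set. Every invariant measure on $\Lambda$ is a limit of averages of empirical measures over large F\o lner sets. Such sets are almost entirely covered by tiles whose interiors are shadowed segments with empirical measure near $\nu_i$, in the right proportions. Hence $\mathcal M(\Lambda,G)\subset U$.

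For the entropy bounds we work at scale $\gamma=\delta/4<\varepsilon$. The lower bound $h(\Lambda,G)>h$ follows because distinct assignments on the tiles inside a F\o lner set produce $(F_n,\delta/2)$-separated points. Their number is at least $\prod_{\text{tiles}}|E|$, which gives a growth rate $>h$ after subtracting the negligible boundary and gap losses. For the upper bound $h(\Lambda,G,\gamma)<h+\beta$, we cover $\Lambda$ at scale $\gamma$ by counting three pieces of data: the tiling pattern restricted to $F_n$, which contributes $e^{o(|F_n|)}$ by the zero-entropy property of Proposition~\ref{pro:3.1}; the choice of block on each tile, which contributes at most $e^{|F_n|(h+\beta/2)}$; and the gap/boundary regions, which have density at most $\epsilon$ and contribute at most $e^{\epsilon|F_n|\log N(\gamma)}$, where $N(\gamma)$ is a $\gamma$-cover number of $X$. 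Choosing $\epsilon$ small relative to $\beta$ and $\log N(\gamma)$ gives the bound.

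The main obstacle is this upper estimate at the fixed scale $\gamma$. Points shadowing the same assignment are only $\delta/4$-close on tile interiors and are unconstrained on the gaps, so the gap density $\epsilon$ must be chosen after $\gamma$ and $N(\gamma)$. Yet $\epsilon$ must also be compatible with the specification constant $K$ at scale $\delta/4$, and with the requirement that $|E|$ still be prescribed within $(h,h+\beta/2)$. I would handle this by fixing $\delta$, hence $K$, $\gamma$ and $N(\gamma)$, first, and only then choosing the tile shapes so F\o lner-invariant that the $K$-boundaries have density below $\beta/(4\log N(\gamma))$. The zero-entropy condition on the tiling is exactly what keeps the tile arrangement from adding entropy. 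In the case $h_\mu=0$ the convention only requires some nonempty $\Lambda$ with $\mathcal M(\Lambda,G)\subset U$, and the same construction with $|E|=1$ on each tile gives this.
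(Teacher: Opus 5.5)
Your architecture matches the paper's:
\begin{itemize}
\item entropy-density (Proposition~\ref{pro:2.15}) to reduce to an ergodic $\mu_0$; the convex-combination detour is unnecessary;
\item a zero-entropy exact tiling, which is Lemma~\ref{lem:2.6}, not Proposition~\ref{pro:3.1} (the latter is already the ergodic case of the theorem);
\item specification plus compactness to produce shadowing points;
\item uniform F{\o}lner averages to control $\mathcal M(\Lambda,G)$;
\item a three-part code for the upper bound.
\end{itemize}
However, two steps fail as written.

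\emph{The placement of blocks.} Your separated family lives on a single F{\o}lner set $F$ fixed before the tiling. Lemma~\ref{lem:2.6} controls only the invariance of the shapes, not their form. So assigning one orbit segment to each tile leaves all but $|F|$ points of a large tile unconstrained. That ruins the measure control, the lower bound and the bad-position count at once. Each tile must instead be filled, up to a small proportion, by many blocks that are mutually separated in the sense required by specification.

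The paper does this as follows:
\begin{itemize}
\item it fixes $K_{n_{k,1}},\dots,K_{n_{k,t_k}}$ first and demands that the shapes be $(L_k,\xi_k)$-invariant;
\item it $\eta_k$-quasi-tiles each shape by these sets (Proposition~\ref{pro:quasi-tiling});
\item it trims the quasi-tiles to subtiles $J\subset K_{n_{k,i}}$ with $F_\Delta Jc\cap J'c'=\varnothing$ for distinct subtiles (Lemma~\ref{lem:quasi-tiling-refinement}).
\end{itemize}
Trimming is exactly where plain $(F,\delta)$-separation breaks down, because the only separating coordinate may be deleted. This is why the paper takes its sets from Proposition~\ref{pro:2.17}. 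Those sets are separated on a $\delta^*$-fraction of coordinates, so they stay $(J,\delta^*/2,\varepsilon^*)$-separated after losing fewer than $3\alpha_k|K|<\frac{\delta^*}{2}|K|$ coordinates. You could instead put one block on the specification-interior of each shape. But that needs a version of Proposition~\ref{pro:2.17} that is uniform over all sufficiently invariant finite sets, not just along $\{K_n\}$, and you would have to prove it.

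\emph{The scale of the upper bound.} Two points that each $\delta/4$-shadow the same assignment are $\delta/2$-close to one another, not $\delta/4$-close. The same doubling affects the gap code if it uses $\gamma$-balls. So your count of codes bounds $s(\cdot,K_n,\delta/2)$. Since $h(\Lambda,G,\cdot)$ can only grow as the scale shrinks, this gives no bound on $h(\Lambda,G,\delta/4)$. Without an expansiveness hypothesis, nothing controls the entropy between these two scales. Moreover, points of $\overline{\bigcup_g gY}\setminus\bigcup_g gY$ carry no code at all.

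The paper's bookkeeping handles both issues:
\begin{itemize}
\item shadow at precision $\Delta$ with $2\Delta<\alpha$;
\item code bad positions by an $\frac{\alpha}{4}$-spanning set;
\item count $(K_m,\alpha)$-separated subsets of $GZ^\#$;
\item pass to the closure at $\gamma=3\alpha$;
\item keep the block separation scale $\varepsilon^*>3\Delta$, so the lower bound still holds at scale $\Delta$.
\end{itemize}
Your device of letting $Y$ range over all tilings in $X_{\mathcal T}$ and all assignments would in fact make $Y$ itself compact and $G$-invariant, which removes the closure step. You would still need the counting scale to be at least twice the shadowing precision.
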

Under the additional assumption of asymptotic entropy expansiveness, we obtain a stronger conclusion that directly yields Theorem~\ref{thm:1.1}. Indeed, entropy-genericity is substantially stronger than the
intermediate entropy property. Moreover, asymptotic entropy
expansiveness guarantees the existence of at least one measure
of maximal entropy.
\begin{theorem}\label{thm:1.4}
	Let $(X,G)$ be an asymptotically entropy expansive system with the specification property. Then the following hold:
	\begin{enumerate}
		\item\label{theorem 4:1} Every invariant measure $\mu\in \mathcal{M}\left( X,G\right)$ is entropy-approximable.
		\item\label{theorem 4:2} The system $(X,G)$ is entropy-generic.
	\end{enumerate}
\end{theorem}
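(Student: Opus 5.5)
We derive both parts from Theorem~\ref{thm:1.3}, using the fact that asymptotic entropy expansiveness makes the scale-$\gamma$ entropy uniformly close to the true entropy. For part~(1), let $\mu\in\mathcal M(X,G)$. If $h_\mu(X,G)=0$, the non-vacuous convention is exactly the conclusion of Theorem~\ref{thm:1.3}. Otherwise fix a neighborhood $U$ of $\mu$, a level $h\in(0,h_\mu(X,G))$ and $\beta>0$.

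Asymptotic entropy expansiveness gives $\varepsilon>0$ such that the tail entropy $h^*(X,G,\varepsilon)$ is less than $\beta/2$. For every compact invariant $\Lambda$ and every $\gamma\in(0,\varepsilon)$ we would then have
\[
h(\Lambda,G)\le h(\Lambda,G,\gamma)+h^*(\Lambda,G,\gamma)\le h(\Lambda,G,\gamma)+\beta/2 .
\]
This is the amenable version of Misiurewicz's/Bowen's tail-entropy inequality. It holds for $G$-actions along a F\o lner sequence, and the tail entropy only decreases on subsystems. Now apply Theorem~\ref{thm:1.3} with $U$, $h$, $\varepsilon$ and $\beta/2$. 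This yields a compact invariant $\Lambda$ and $\gamma\in(0,\varepsilon)$ with $\mathcal M(\Lambda,G)\subset U$, $h(\Lambda,G)>h$ and $h(\Lambda,G,\gamma)<h+\beta/2$. Hence $h<h(\Lambda,G)<h+\beta$, which is entropy-approximability.

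For part~(2), fix $\alpha\in[0,h(X,G))$. Asymptotic entropy expansiveness makes $\nu\mapsto h_\nu$ upper semicontinuous, so $\mathcal M^\alpha(X,G)$ is closed in the compact metrizable space $\mathcal M(X,G)$ and is therefore a Baire space. It is nonempty, since a measure of maximal entropy exists. We would write
\[
\mathcal M_e(X,G,\alpha)=\mathcal M_e\cap\{h_\nu\le\alpha\}\cap\mathcal M^\alpha .
\]
The set $\{h_\nu<\alpha+1/n\}$ is open by upper semicontinuity, and $\mathcal M_e(X,G)$ is a $G_\delta$ in $\mathcal M(X,G)$, which is standard for countable amenable $G$. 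Hence $\mathcal M_e(X,G,\alpha)$ is a $G_\delta$ subset of $\mathcal M^\alpha$, and it suffices to show that it is dense there.

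For density, take $\nu\in\mathcal M^\alpha$ and a neighborhood $U$ of $\nu$.
\begin{itemize}
\item \emph{Case $h_\nu>\alpha$.} Apply part~(1) at $\nu$ with a level $h'$ slightly above $\alpha$ (or $h'\in(0,h_\nu)$ small when $\alpha=0$) and $\beta$ small. This gives $\Lambda$ with $\mathcal M(\Lambda,G)\subset U$ and $\alpha\le h'<h(\Lambda,G)<h'+\beta$. By the variational principle and upper semicontinuity on $\Lambda$, there is an ergodic measure $\omega$ on $\Lambda$ with $h_\omega=h(\Lambda,G)$. Then $\omega\in U\cap\mathcal M^\alpha$ is ergodic with entropy within $\beta$ of $\alpha$.
\item \emph{Case $h_\nu=\alpha$.} First perturb $\nu$ to $(1-t)\nu+t m$, where $m$ is a measure of maximal entropy and $t$ is small. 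This measure still lies in $U$ and has entropy greater than $\alpha$ because $\alpha<h(X,G)$, so we are back in the first case.
\end{itemize}
This shows that ergodic measures in $\mathcal M^\alpha$ with entropy in $[\alpha,\alpha+\beta)$ are dense for every $\beta>0$. Equivalently, for each $n$ the set
\[
\mathcal M_e\cap\{h_\nu<\alpha+1/n\}\cap\mathcal M^\alpha
\]
is dense in $\mathcal M^\alpha$; it is relatively $G_\delta$ by the openness and $G_\delta$ facts above. The intersection over $n$ is therefore residual by Baire, and this intersection is exactly $\mathcal M_e(X,G,\alpha)$.

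The main obstacle is the first displayed inequality: establishing the amenable tail-entropy bound $h(\Lambda,G)\le h(\Lambda,G,\gamma)+h^*(X,G,\gamma)$ uniformly over subsystems, in terms of the paper's scale-$\gamma$ Bowen entropy along F\o lner sets. I would prove it by covering $\Lambda$ with $(F_n,\gamma)$-spanning sets and refining each $(F_n,\gamma)$-Bowen ball at scale $\delta$. This step needs a quasi-tiling or Ornstein--Weiss subadditivity argument to control the refinement counts, in place of the $\mathbb Z$ concatenation trick. Since asymptotic entropy expansiveness for $G$-actions is presumably defined through this tail entropy, I expect the bound to be available or routine.
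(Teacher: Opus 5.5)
Your proposal is correct and is essentially the paper's proof. Part~(1) is Theorem~\ref{thm:1.3} plus the scale-entropy inequality, exactly as in Proposition~\ref{pro:4.1}. Part~(2) is the same Baire-category argument as Proposition~\ref{pro:4.2}: upper semicontinuity, the $G_\delta$ property of $\mathcal M_e(X,G)$, and density obtained by perturbing toward a higher-entropy measure and then applying entropy-approximability.

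The step you flag as the main obstacle needs no new work. The paper defines asymptotic entropy expansiveness through $h_{\mathrm{loc}}(\varepsilon,G)$, not tail entropy, and takes $h(\Lambda,G)\le h(\Lambda,G,\gamma)+h_{\mathrm{loc}}(\gamma,G)$ directly from Theorem~2 of \cite{RS16}, applied to the subsystem. You also do not need an ergodic measure of maximal entropy on $\Lambda$: the variational principle gives an ergodic $\omega$ on $\Lambda$ with $h_\omega>\alpha$, and any such $\omega$ automatically has $h_\omega\le h(\Lambda,G)$.
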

When $h(X,G)=0$, entropy-genericity holds vacuously.
Nevertheless, since the specification property implies
entropy-denseness, $\mathcal M_e(X,G)$ is dense in
$\mathcal M(X,G)$. Since $\mathcal M_e(X,G)$ is a
$G_\delta$ subset of $\mathcal M(X,G)$, it is residual in
$\mathcal M(X,G)$.

The following corollary is obtained by combining these findings with the description of the structure of $\mathcal{M}(X,G)$ provided in Theorem~\ref{thm:1.4}. 
\begin{corollary}\label{cor:1.5}
Let $(X,G)$ be an asymptotically entropy expansive system with the specification property. For $U\subset \mathcal{M}\left( X,G\right)$, denote
$$\mathcal{H}(X,G,U):=\left\lbrace h_{\nu}(X, G): \nu\in U\cap\mathcal{M}_{e}\left( X,G\right)\right\rbrace. $$ 
Then, for every $\mu\in\mathcal M(X,G)$ and every neighborhood
$U$ of $\mu$, we have
\begin{align*}
\begin{cases}
\mathcal{H}(X,G,U) \supset \big[0, h_\mu(X, G)\big], & \text{if } h_\mu(X, G)< h(X, G); \\
\mathcal{H}(X,G,U) \supset \big[0, h_\mu(X, G)\big), & \text{if } h_\mu(X, G) = h(X, G).
\end{cases}
\end{align*}
\end{corollary}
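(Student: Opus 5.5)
The plan is to derive Corollary~\ref{cor:1.5} from Theorem~\ref{thm:1.4}\eqref{theorem 4:2} (entropy-genericity) by a Baire category argument. For the endpoint value $h_\mu(X,G)$ itself, and for the degenerate cases, I will use Theorem~\ref{thm:1.4}\eqref{theorem 4:1} together with the remark following Theorem~\ref{thm:1.4}.

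\textbf{Setup.} Fix $\mu\in\mathcal M(X,G)$ and an open neighborhood $U$ of $\mu$ (shrinking $U$ only makes the claim harder). Take $\alpha\in[0,h(X,G))$ with $\alpha\le h_\mu(X,G)$. Then $\mu\in\mathcal M^{\alpha}(X,G)$, so $U\cap\mathcal M^\alpha(X,G)$ is a nonempty relatively open subset of $\mathcal M^\alpha(X,G)$.

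\textbf{Main case.} Asymptotic entropy expansiveness makes the entropy map $\nu\mapsto h_\nu(X,G)$ upper semicontinuous on $\mathcal M(X,G)$. Hence $\mathcal M^\alpha(X,G)$ is a closed subset of the compact metrizable space $\mathcal M(X,G)$, and therefore a Baire space. By entropy-genericity, $\mathcal M_e(X,G,\alpha)$ is residual in $\mathcal M^\alpha(X,G)$, so it is dense there. It therefore meets the nonempty open set $U\cap\mathcal M^\alpha(X,G)$, which gives an ergodic $\nu\in U$ with $h_\nu(X,G)=\alpha$. This proves $[0,h_\mu(X,G)]\cap[0,h(X,G))\subset\mathcal H(X,G,U)$. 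That already handles the case $h_\mu=h$, and for $h_\mu<h$ it covers the whole closed interval $[0,h_\mu]$. The value $\alpha=0$ is included as long as $h(X,G)>0$.

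\textbf{Degenerate case $h(X,G)=0$.} Here we need an ergodic measure in $U$, which necessarily has entropy $0$. The remark after Theorem~\ref{thm:1.4} shows that $\mathcal M_e(X,G)$ is dense in $\mathcal M(X,G)$, so such a measure exists. Alternatively, the non-vacuous convention in Theorem~\ref{thm:1.4}\eqref{theorem 4:1} gives a compact invariant $\Lambda$ with $\mathcal M(\Lambda,G)\subset U$, and any ergodic measure on $\Lambda$ works.

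\textbf{Main obstacle.} The only delicate point is checking that $\mathcal M^\alpha(X,G)$ is a Baire space, so that "residual" implies "dense". This relies on upper semicontinuity of entropy for asymptotically entropy expansive $G$-actions, which I would quote as a standard fact (it is also implicit in the paper's remark that such systems have a measure of maximal entropy). If that fact is not available, the fallback is to use entropy-approximability directly. Choose $\Lambda$ with $\mathcal M(\Lambda,G)\subset U$ and $h(\Lambda,G)$ in a prescribed small window, take ergodic measures on such sets via the variational principle, and pass to limits using the closedness of $\mathcal M(\Lambda,G)$. That route only gives entropies approximately equal to $\alpha$, so the Baire argument is preferable.
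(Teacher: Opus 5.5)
Your proof is correct. Its core is the same as the paper's: the corollary follows from entropy-genericity (Theorem~\ref{thm:1.4}(2)) via Proposition~\ref{pro:4.3}. The density of the residual set $\mathcal M_e(X,G,\alpha)$ in $\mathcal M^\alpha(X,G)$ comes from compactness of $\mathcal M^\alpha(X,G)$ under upper semi-continuity, which the paper records as Proposition~\ref{pro:2.11}. So your fallback is not needed.

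The one real difference is the endpoint. The paper runs the Baire argument only for $\alpha<h_\mu(X,G)$. When $h_\mu(X,G)<h(X,G)$, it recovers $\alpha=h_\mu(X,G)$ by passing to $\mu'=(1-\frac{\eta}{D^*})\mu+\frac{\eta}{D^*}\mu_0$. This measure still lies in $U$ and has strictly larger entropy, by the variational principle and affinity of the entropy map. You instead apply entropy-genericity directly at $\alpha=h_\mu(X,G)$. That is legitimate, because this level lies in $[0,h(X,G))$ and $\mu\in\mathcal M^{h_\mu(X,G)}(X,G)$. Your route is shorter and dispenses with the convex-combination step.

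Your separate treatment of $h(X,G)=0$ is harmless but unnecessary. In that case $h_\mu(X,G)=h(X,G)=0$, so the required inclusion is $\mathcal H(X,G,U)\supset[0,0)=\varnothing$.
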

\section{Preliminaries}\label{sec:Pre}
In this section, we present some notions and properties concerning amenable groups.

Let $\mathcal{F}(G)$ be the collection of finite subsets of $G$. A sequence $\{F_n\}_{n=1}^\infty \subset \mathcal{F}(G)$ is said to be a F{\o}lner sequence if for any $e \neq g \in G$, we have
$$\lim_{n \to \infty} \frac{|g F_n \triangle F_n|}{|F_n|} = 0,$$
where $gF=\left\lbrace gf:f\in F\right\rbrace $, and $|\cdot|$ denotes the cardinality of a set. We say $G$ is amenable if it admits a F{\o}lner sequence.

Let $K \in \mathcal{F}(G)$. The $K$ boundary of $F$ is defined by
\[
\partial_K F := \{c \in G : Kc \cap F \neq \emptyset, \; Kc \cap (G \setminus F) \neq \emptyset\}.
\]
We say the set $F$ is $(K, \delta)$-invariant if $\frac{|\partial_K F|}{|F|} < \delta$. Equivalently, $\{F_n\}_{n=1}^\infty$ is a F{\o}lner sequence if for every $K\in\mathcal{F}(G)$, $\delta>0$, there exists $N>0$ such that for every $n>N$, $F_n$ is $(K,\delta)$-invariant.

A F{\o}lner sequence $\{F_n\}_{n=1}^\infty$ is said to be \textit{tempered} if there exists a constant $L$ independent of $n$ such that $|\bigcup_{k<n}F_k^{-1}F_n|\leq L|F_n|.$
\subsection{Topological entropy and expansiveness}
In this subsection, we recall the definition of topological
entropy for $G$-topological dynamical systems and some related
notions. Throughout the paper, we use the convention $\log=\log_e$.
We fix a tempered F{\o}lner sequence $\{K_n\}_{n\ge1}$ and
compute all scale-dependent entropy quantities with respect to
this sequence.

Let $(X,\rho)$ be a compact metric space. For any
$\varepsilon>0$ and any $F\in\mathcal F(G)$, a set
$E\subset X$ is called $(F,\varepsilon)$-separated if, for any
distinct $x,y\in E$, there exists $g\in F$ such that
$\rho(gx,gy)>\varepsilon$.
For any subset $Y\subset X$, let $s(Y,F,\varepsilon)$ denote the
maximal cardinality of an $(F,\varepsilon)$-separated subset of
$Y$. Define
\[
h(Y,G,\varepsilon)
:=
\limsup_{n\to\infty}
\frac{1}{|K_n|}
\log s(Y,K_n,\varepsilon).
\]
For a compact $G$-invariant subset $Y\subset X$, its topological
entropy is defined by
\[
h(Y,G)
:=
\lim_{\varepsilon\to0}h(Y,G,\varepsilon).
\]
Let $A$ be a compact subset of $X$. For $F\in\mathcal F(G)$, define the Bowen metric $\rho_F$ by
$\rho_F(x,y):=\max_{g\in F}\rho(gx,gy)$. For $\varepsilon>0$, $x\in A$, and $n\ge1$, the set
\[
B_{K_n}(x,\varepsilon)
=
\left\{y\in X:\rho_{K_n}(x,y)<\varepsilon\right\}
\]
is called an $(n,\varepsilon)$-ball of $(X,G)$. For $\delta>0$, define
\[
s(K_n,\delta,\varepsilon,A,x)
:=
\max
\left\{
|E|:
E\subset A\cap B_{K_n}(x,\varepsilon)
\text{ is }(K_n,\delta)\text{-separated}
\right\}.
\]
Let
$s(K_n,\delta,\varepsilon,A)
:=
\sup_{x\in A}s(K_n,\delta,\varepsilon,A,x)$,
and define
\[
h(A,\varepsilon)
:=
\lim_{\delta\to0}
\limsup_{n\to\infty}
\frac{1}{|K_n|}
\log s(K_n,\delta,\varepsilon,A).
\]
For $\mu\in\mathcal M(X,G)$, define
\[
h_{\mu,\mathrm{loc}}(\varepsilon,G)
:=
\lim_{\sigma\to1^-}
\inf
\left\{
h(A,\varepsilon):
A\subset X \text{ is compact and } \mu(A)>\sigma
\right\},
\]
and
\[
h_{\mathrm{loc}}(\varepsilon,G)
:=
\sup_{\mu\in\mathcal M(X,G)}
h_{\mu,\mathrm{loc}}(\varepsilon,G).
\]
We denote by $h_{\mu,\mathrm{loc}}(\varepsilon,G)$ the $\varepsilon$-local entropy of $\mu$ and by $h_{\text{loc}}(\varepsilon,G)$ the $\varepsilon$-local entropy of $(X,G)$. Both depend on the selected F{\o}lner sequence.

We now recall the definition of asymptotic entropy expansiveness.
\begin{definition}\cite[Definition 3.1]{RS16}
	The system $(X,G)$ is said to be asymptotically entropy expansive
	if there exists a F{\o}lner sequence with respect to which
	\[
	\lim_{\varepsilon\to0}
	h_{\mathrm{loc}}(\varepsilon,G)=0.
	\]
\end{definition}
When $(X,G)$ is asymptotically entropy expansive, the fixed
tempered F{\o}lner sequence $\{K_n\}_{n\ge1}$ is chosen so that
$\lim_{\varepsilon\to0}
h_{\mathrm{loc}}(\varepsilon,G)=0$.
We shall use the following entropy estimate.
\begin{proposition}\cite[Theorem 2]{RS16}
	Let $(X,G)$ be a topological dynamical system. Then, for every
	$\varepsilon>0$,
	\[
	h(X,G)
	\le
	h(X,G,\varepsilon)
	+
	h_{\mathrm{loc}}(\varepsilon,G).
	\]
\end{proposition}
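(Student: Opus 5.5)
This proposition is cited from \cite[Theorem 2]{RS16}. I would prove it by the standard Misiurewicz--Bowen argument adapted to amenable groups. The route goes through the variational principle together with a local-entropy decomposition of measure-theoretic entropy. Fix $\varepsilon>0$. By the variational principle, $h(X,G)=\sup_{\mu\in\mathcal M_e(X,G)}h_\mu(X,G)$, so it suffices to show
\[
h_\mu(X,G)\le h(X,G,\varepsilon)+h_{\mu,\mathrm{loc}}(\varepsilon,G)
\]
for every ergodic $\mu$. The bound for invariant $\mu$ then follows by ergodic decomposition; in fact only ergodic measures are needed. Taking the supremum over $\mu$ and using $h_{\mu,\mathrm{loc}}\le h_{\mathrm{loc}}$ then gives the claim.

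For ergodic $\mu$, I would use a Katok-type formula for amenable groups along the tempered F{\o}lner sequence $\{K_n\}$. The key input is the Shannon--McMillan--Breiman theorem of Lindenstrauss, which is valid along tempered sequences. Fix a finite partition $\xi$ with $h_\mu(\xi)$ close to $h_\mu(X,G)$. For $\sigma$ close to $1$, there is a compact $A$ with $\mu(A)>\sigma$ on which the SMB convergence is uniform. Take such an $A$ also with $h(A,\varepsilon)$ nearly minimal.

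The counting step is as follows. Choose $\delta>0$ small, in particular smaller than a Lebesgue-number-type scale of $\xi$ after shrinking the boundaries of $\xi$ in measure. For large $n$, the set $A$ needs roughly $e^{|K_n|(h_\mu(\xi)-\eta)}$ atoms of $\xi^{K_n}$ to cover it, so a maximal $(K_n,\delta)$-separated subset of $A$ has at least comparable cardinality. The boundary effects are controlled by the usual trick of modifying $\xi$ so that $\mu(\partial\xi)$ is small and applying the ergodic theorem. Now cover $A$ by $(K_n,\varepsilon)$-Bowen balls centred at a maximal $(K_n,\varepsilon)$-separated set $E\subset A$, so $|E|\le s(X,K_n,\varepsilon)$. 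Each ball meets $A$ in a set containing at most $s(K_n,\delta,\varepsilon,A)$ points that are $(K_n,\delta)$-separated; this holds with $2\varepsilon$ if centres are taken in $A$, which is harmless after the usual adjustment. Hence
\[
s(A,K_n,\delta)\le s(X,K_n,\varepsilon)\cdot s(K_n,\delta,\varepsilon,A).
\]
Taking $\frac1{|K_n|}\log$, then $\limsup_n$ and $\delta\to0$, gives $h_\mu(\xi)-\eta\le h(X,G,\varepsilon)+h(A,\varepsilon)$. Finally, take the infimum over $A$, let $\sigma\to1$, and let $\eta\to0$.

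I expect the main obstacle to be the lower bound $s(A,K_n,\delta)\gtrsim e^{|K_n|(h_\mu-\eta)}$ for sets of large measure. This is where temperedness is essential, via the SMB theorem, and where the partition-boundary issue must be handled carefully. My first approach would be to use a partition whose boundary has small measure. I would then count via the pointwise ergodic theorem along tempered sequences, which shows that most orbit points in $K_n$ stay $\delta$-away from $\partial\xi$, so that $(K_n,\delta)$-close points share most of their $\xi$-names. A further combinatorial estimate bounds the number of names differing in a small fraction of $K_n$ by $e^{|K_n|\cdot o(1)}$.
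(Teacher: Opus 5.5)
\paragraph{Comparison with the paper.} The paper gives no argument for this statement; it imports the inequality from \cite{RS16}. Your route is a legitimate self-contained alternative. You reduce to ergodic $\mu$ by the variational principle, then combine a Katok-type lower bound for $s(A,K_n,\delta)$ with the covering estimate $s(A,K_n,\delta)\le s(X,K_n,\varepsilon)\,s(K_n,\delta,\varepsilon,A)$. This gives the stronger per-measure bound $h_\mu(X,G)\le h(X,G,\varepsilon)+h_{\mu,\mathrm{loc}}(\varepsilon,G)$, up to the correction below. It also needs less than you suggest:
the lower bound holds for every compact $A$ with $\mu(A)>0$;
convergence in measure in the Shannon--McMillan--Breiman theorem (Ornstein--Weiss, along any F{\o}lner sequence) plus the mean ergodic theorem suffices, since you may intersect $A$ with $n$-dependent good sets;
so neither uniformity on $A$ nor temperedness is essential.

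\paragraph{The step you call harmless.} This is exactly where the statement as written fails. Maximality of $E$ only gives $\rho_{K_n}(x,y)\le\varepsilon$, whereas $B_{K_n}(x,\varepsilon)$ is open. Doubling the radius is beside the point, because your centres already lie in $A$. What your argument actually proves is
$h(X,G)\le h(X,G,\varepsilon)+h_{\mathrm{loc}}(\varepsilon',G)$ for every $\varepsilon'>\varepsilon$.
Equivalently, it proves the exact inequality when closed Bowen balls are used in $s(K_n,\delta,\varepsilon,A,x)$.

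No adjustment can give the open-ball version at every $\varepsilon$. Take $G=\mathbb Z$, $K_n=\{0,\dots,n-1\}$, and the full shift $X=\{0,1\}^{\mathbb Z}$ with the ultrametric $\rho(x,y)=\varepsilon_0\,2^{-\min\{|i|:x_i\neq y_i\}}$ for $x\neq y$. Let $\varepsilon=\varepsilon_0=\operatorname{diam}X$.
No two points are $(K_n,\varepsilon_0)$-separated, so $h(X,G,\varepsilon_0)=0$.
The ball $B_{K_n}(x,\varepsilon_0)$ is the cylinder fixing $x_0,\dots,x_{n-1}$. If $\delta\ge\varepsilon_0 2^{-m}$, its $(K_n,\delta)$-separated subsets have at most $2^{2m}$ points, so $h_{\mathrm{loc}}(\varepsilon_0,G)=0$.
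Yet $h(X,G)=\log 2$.

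The weakened form is all that Proposition~\ref{pro:4.1} uses, applied with some $\varepsilon'\in(\gamma,\varepsilon)$. So you should state and prove that form rather than the literal one.

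\paragraph{Two smaller repairs in the counting step.} First, in a general compact metric space, $\rho(gx,gy)\le\delta$ together with "the distance from $gx$ to $\partial\xi$ exceeds $\delta$" does not force $gy\in\xi(gx)$. Use instead $U_\delta:=\{z:\rho(z,X\setminus\xi(z))\le\delta\}$, where $\xi(z)$ is the atom containing $z$. This set is Borel and decreases to a subset of $\partial\xi$ as $\delta\downarrow0$. Off $U_\delta$, the closed $\delta$-ball lies inside the atom.

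Second, the centres of your maximal separated set must themselves be good points. Choose it inside
$A_n:=A\cap\{\text{SMB-good points}\}\cap\{x:|\{g\in K_n:gx\in U_\delta\}|<\tau|K_n|\}$,
not inside $A$. Then:
each closed $(K_n,\delta)$-ball centred in $A_n$ meets at most $|\xi|^{\tau|K_n|}$ atoms of $\xi^{K_n}$, since the bad positions are fixed by the centre and no binomial factor is needed;
at least $\mu(A_n)e^{|K_n|(h_\mu(\xi)-\eta)}$ atoms meet $A_n$, with $\mu(A_n)\to\mu(A)$.
Together these give your lower bound.
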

\subsection{Some facts from symbolic dynamics}
In this subsection, we introduce some facts on symbolic dynamics.

Let $\mathcal{A}$ be a finite set equipped with the discrete topology. The group $G$ acts on $\mathcal{A}^G$ in a standard way, known as the shift action, which is defined by $(gx)(h) = x(hg)$ for all $g,h\in G$ and $x\in\mathcal{A}^G$. Endowed with the product topology and this shift action, $\mathcal{A}^G$ becomes a zero-dimensional dynamical system, called the full shift over $\mathcal{A}$. A symbolic dynamical system over $\mathcal{A}$ is defined as any closed and $G$-invariant subset $X$ of the full shift.

In the following we will need the concepts of a block and its associated cylinder set. Take a finite subset $E\subset G$. A block with domain $E$ is simply an element $B\in\mathcal{A}^E$. The cylinder set determined by $B$ is given by
\[[B] = \{x\in\mathcal{A}^G : x|_E = B\}.\]
For a symbolic system $X$, we denote by $X_E = \{B \in \mathcal{A}^E : X \cap [B] \neq \emptyset\}$ the collection of all blocks of domain $E$ that appear somewhere in $X$.

Given a symbolic system $X$, one can build a \textit{topological factor} in the form of another symbolic system $Y$ over a finite alphabet $\Upsilon$ by means of a \textit{sliding block code with finite horizon}. Such a factor is specified by a mapping $\Pi : X_E \to \Upsilon$, called the \textit{code}, where the finite set $E$ is referred to as the \textit{horizon}. The code $\Pi$ induces a map $\pi : X \to \Upsilon^G$ via $x \mapsto y$ with
\[y(g)=\Pi\bigl((gx)|_E\bigr).\]
Then $Y = \pi(X)$ is a closed, shift-invariant subset of $\Upsilon^G$. In practice, to verify that $Y$ is indeed a topological factor of $X$, one checks whether every $y\in Y$ is \textit{determined} term by term from some $x\in X$ by an algorithm with \textit{finite horizon}. That is, we must be able to decide the symbol $y(g)$ based solely on:
\begin{enumerate}
	\item $(gx)|_E$: the symbolic content of $x$ within the copy of
	$E$ shifted by $g$;
	\item a \textit{finite amount of additional information}: namely the mapping $\Pi$, which serves as a finite set of rules telling how to compute $\Pi(B)$ from any admissible block $B\in\mathcal{A}^E$.
\end{enumerate}
We will employ the concept of topological entropy for actions of amenable groups, mainly in the symbolic setting, where it works analogously to the case of $\mathbb{Z}$-actions. In \cite{DHZ19}, the authors use the convention $\log=\log_2$.
\begin{definition}\cite[Definition 2.11]{DHZ19}
	Let $\{F_n\}_{n=1}^\infty$ be a F{\o}lner sequence. The \textit{topological entropy} of a symbolic dynamical system $X$ is defined as the limit
	\[
	{h}(X) = \lim_{n \to \infty} \frac{1}{|F_n|} \log N(F_n),
	\]
	where $N(F_n)$ denotes the $F_n$-\textit{complexity} of $X$, i.e., $N(F_n) = |X_{F_n}|$, the number of distinct blocks of domain $F_n$ that occur in $X$.
\end{definition}
\subsection{Quasitilings and tilings of amenable groups}
In this subsection, we present several facts concerning quasitilings and tilings for amenable groups.

Quasi-tiling theory, developed by Ornstein and Weiss \cite{OW87}, provides a powerful framework for studying amenable group actions.

A finite family \( A_1, A_2, \cdots, A_k \in \mathcal{F}(G) \) is said to be $\delta$-disjoint if there exist subsets \(\{B_1, B_2, \cdots, B_k\} \subset \mathcal{F}(G)\) satisfying:
\begin{itemize}
	\item \( B_i \subset A_i \) for \( i = 1, 2, \cdots, k \),
	\item \( B_i \cap B_j = \emptyset \) for \( 1 \leq i < j \leq k \),
	\item \(\frac{|B_i|}{|A_i|} > 1 - \delta \quad\text{for } i = 1, 2, \cdots, k.\)
\end{itemize}
For \(\alpha \in (0, 1]\), the collection \(\{A_1, A_2, \cdots, A_k\}\) is said to $\alpha$-cover a set \(A \in \mathcal{F}(G)\) if
\[
\frac{\left|A \cap \left(\bigcup_{i=1}^k A_i\right)\right|}{|A|} \geq \alpha.
\]
A collection \(\{A_1, A_2, \cdots, A_k\} \subset \mathcal{F}(G)\) is called a $\delta$-quasi-tiling of \(F \in \mathcal{F}(G)\) if there exist sets \(\{C_1, C_2, \cdots, C_k\} \subset\mathcal{F}(G)\) such that:
\begin{itemize}
	\item \( A_i C_i \subset F \) and the family \(\{A_i c \mid c \in C_i\}\) is $\delta$-disjoint for each \(i = 1, 2, \cdots, k\),
	\item \( A_i C_i \cap A_j C_j = \emptyset \) for \( 1 \leq i \neq j \leq k \),
	\item The collection \(\{A_i C_i \mid i = 1, 2, \cdots, k\}\) forms a $(1 - \delta)$-cover of \(F\).
\end{itemize}
The sets \(C_1, C_2, \cdots, C_k\) are referred to as the tiling centers. 

The following direct consequence of
\cite[Lemma~9.4.14]{C15}, expressed in terms of
$(K,\delta)$-invariance, will be used below.
\begin{proposition}\label{pro:quasi-tiling}
Let $0<\eta\le\frac12$. Then there exists an integer
$s_0=s_0(\eta)\ge1$ such that, for every integer $s\ge s_0$,
the following holds.
Let $F_1,F_2,\dots,F_s,D\in\mathcal F(G)$ be nonempty finite sets.
Assume that
\[
F_j
\text{ is }
\left(F_i,\eta^{2s}\right)\text{-invariant}
\qquad
\text{for all }1\le i<j\le s,
\]
and that
\[
D
\text{ is }
\left(F_i,\eta^{2s}\right)\text{-invariant}
\qquad
\text{for every }i=1,\dots,s.
\]
Then $D$ can be $\eta$-quasi-tiled by
$F_1,F_2,\dots,F_s$.
\end{proposition}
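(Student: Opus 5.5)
The plan is to derive the statement from the Ornstein--Weiss quasi-tiling lemma in the form of \cite[Lemma~9.4.14]{C15}. Most of the work is translating between the notion of $(K,\delta)$-invariance used here, defined through $|\partial_K F|/|F|<\delta$, and the invariance hypothesis in the cited lemma. That hypothesis is typically phrased as $|KF\triangle F|<\delta|F|$, or via the $K$-interior $\{c\in F: Kc\subset F\}$ having at least $(1-\delta)|F|$ elements.

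\textbf{Step 1: comparison of invariance notions.} I will first check elementary comparison inequalities between the boundary $\partial_K F$ and the interior $\mathrm{int}_K F=\{c: Kc\subset F\}$. Note that $F\setminus \mathrm{int}_K F\subset \partial_K F$ whenever $K$ contains $e$, which we may assume after replacing $K$ by $K\cup\{e\}$. Also, $|KF\setminus F|\le |K|\,|\partial_K F|$, or a similar bound with the roles of $K$ and $K^{-1}$ adjusted according to the left/right convention in \cite{C15}. These inequalities show that $(F_i,\eta^{2s})$-invariance in our sense implies the invariance required in \cite[Lemma~9.4.14]{C15}, possibly with $\eta^{2s}$ replaced by a quantity that is still smaller than the threshold there once $s\ge s_0(\eta)$. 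I would enlarge $s_0$ if necessary to absorb these constants.

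\textbf{Step 2: greedy construction.} If the constants cannot be matched directly, I will instead reprove the statement by the greedy construction, and I expect this to be the only genuinely delicate point. Start with the most invariant shape $F_s$ and choose a maximal family of right translates $F_sc\subset D$ that is $\eta$-disjoint. Maximality together with the $(F_s,\eta^{2s})$-invariance of $D$ gives that these translates cover at least an $\eta$-fraction of $D$, up to an error of order $\eta^{2s}$. This comes from a double counting of pairs $(g,c)$ with $g\in F_sc$: each point of $\mathrm{int}_{F_s}D$ lies in some $F_s$-translate $F_sc\subset D$, and if the union covered too little, some such translate would meet the current union in less than an $\eta$-fraction, contradicting maximality.

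\textbf{Step 3: iteration over shapes.} Next pass to $F_{s-1}$, working on the uncovered part $D_{s-1}$. Its $F_{s-1}$-boundary is contained in $\partial_{F_{s-1}}D$ together with the $F_{s-1}$-boundaries of the chosen $F_s$-translates. The latter are small because $F_s$ is $(F_{s-1},\eta^{2s})$-invariant, and the number of translates is at most $|D|/((1-\eta)|F_s|)$. Hence either $D_{s-1}$ is already smaller than $\eta|D|$, or it is relatively $(F_{s-1},\eta^{s})$-invariant. Repeating this, after each stage the uncovered fraction is multiplied by at most $1-\eta/2$. Choosing $s_0$ with $(1-\eta/2)^{s_0}<\eta$ gives the required $(1-\eta)$-cover of $D$. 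The $\eta^{2s}$ in the hypotheses is what leaves room for the $s$ accumulated boundary errors.

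\textbf{Step 4: disjointness between shapes.} Finally I will ensure $A_iC_i\cap A_jC_j=\emptyset$ for $i\ne j$. The translates of $F_i$ are chosen to be $\eta$-disjoint from everything already placed. Either this is built into the quasi-tiling convention used in \cite{C15}, or one trims each translate by removing its overlap with earlier ones and uses the disjoint cores $B$ from the definition of $\eta$-disjointness. I expect this bookkeeping, rather than the covering estimate, to be the main obstacle in matching the definitions exactly.
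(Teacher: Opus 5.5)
Your main route is the paper's own: the paper records this statement purely as a direct consequence of \cite[Lemma~9.4.14]{C15}, restated in the present $(K,\delta)$-invariance convention, with no further argument. Make the translation lossless, since a factor like $|F_i|$ (as in $|KF\setminus F|\le|K|\,|\partial_KF|$) cannot be absorbed into $s_0(\eta)$: right-translate each $F_i$ so that $e\in F_i$ (this changes neither the hypotheses nor the conclusion, whereas adjoining $e$ would require a stronger hypothesis) and use $F\setminus\{c:F_ic\subset F\}\subset\partial_{F_i}F$; in your fallback, choose the stage-$i$ translates inside $D\setminus\bigcup_{j>i}F_jC_j$, which makes $F_iC_i\cap F_jC_j=\emptyset$ automatic, whereas trimming translates would not give a quasi-tiling by the $F_i$ themselves.
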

For later use, we record the following reformulation of the
geometric part of \cite[Lemma~5.2]{RTZ23} in the notation of
the present paper.
\begin{lemma}\label{lem:quasi-tiling-refinement}
Let $t\ge1$ be an integer, let
$0<\gamma<\frac{1}{12}$, and let
$F\in\mathcal F(G)$ satisfy
$e\in F$ and $F=F^{-1}$.
Let $A_1,\dots,A_t,S\in\mathcal F(G)$ be nonempty finite sets.

Assume that, for every $i=1,\dots,t$,
$A_i\text{ is }\left(F,\frac{\gamma}{|F|}\right)\text{-invariant}$,
and that $S$ admits a
$\frac{\gamma}{|F|}$-quasi-tiling by
$A_1,\dots,A_t$ with center sets $C_1,\dots,C_t$.
Then there exist subsets
\[
J_{i,c}\subset A_i
\quad\text{for } i=1,\dots,t
\text{ and } c\in C_i,
\]
such that, if we define
$\mathcal P_S:=
\left\{
J_{i,c}c:
i=1,\dots,t,\ c\in C_i
\right\}
~~
and
~~
\widetilde S
:=
\bigcup_{P\in\mathcal P_S}P$,
then the following properties hold:
\begin{enumerate}
    \item For any distinct $P,P'\in\mathcal P_S$,
    $FP\cap P'=\varnothing$.

    \item The elements of $\mathcal P_S$ are pairwise disjoint,
    and
    $|J_{i,c}|>(1-3\gamma)|A_i|$
    for every $i=1,\dots,t$ and every $c\in C_i$.

    \item
    $
    \widetilde S\subset S,
    ~~
    |\widetilde S|>(1-4\gamma)|S|,
    ~~
    F\widetilde S\subset S$.
\end{enumerate}
\end{lemma}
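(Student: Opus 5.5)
The plan is to build the sets $J_{i,c}$ directly from the disjointified subsets that the quasi-tiling hypothesis supplies. Each $J_{i,c}c$ will be the \emph{$F$-interior} of its disjointified tile. Put $\delta:=\gamma/|F|$. By the definition of a $\delta$-quasi-tiling, the translates $A_ic$ ($c\in C_i$) are $\delta$-disjoint for each fixed $i$, and the families $A_iC_i$ for different $i$ are disjoint. Hence we may choose sets $B_{i,c}\subset A_ic$ with the following properties: they are pairwise disjoint over all pairs $(i,c)$, and $|B_{i,c}|>(1-\delta)|A_i|$. For tiles of different types we simply take the $B$'s given for each type, since $A_iC_i\cap A_jC_j=\varnothing$. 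Then define
\[
J_{i,c}:=\{a\in A_i:\ F a c\subset B_{i,c}\},\qquad\text{so that}\qquad J_{i,c}c=\{p\in A_ic:\ Fp\subset B_{i,c}\}.
\]

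\emph{Property (1) and disjointness.} Since $e\in F$, we get $J_{i,c}c\subset B_{i,c}$, so the elements of $\mathcal P_S$ are pairwise disjoint. Now suppose $fp=p'$ with $f\in F$, $p\in P=J_{i,c}c$ and $p'\in P'=J_{j,d}d$, where $(i,c)\neq(j,d)$. Then $p'\in Fp\subset B_{i,c}$. But also $p'\in P'\subset B_{j,d}$, and $B_{i,c}\cap B_{j,d}=\varnothing$, a contradiction. Hence $FP\cap P'=\varnothing$. (Symmetry $F=F^{-1}$ is not even needed for this direction, only $e\in F$.)

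\emph{Size estimate in (2).} An element $a\in A_i\setminus J_{i,c}$ falls into one of two cases.
\begin{enumerate}
\item $Fa\not\subset A_i$. Since $e\in F$ we have $Fa\cap A_i\neq\varnothing$, so $a\in\partial_F A_i$. There are fewer than $\frac{\gamma}{|F|}|A_i|\le\gamma|A_i|$ such $a$.
\item $Fa\subset A_i$ but $Fac$ meets $A_ic\setminus B_{i,c}$. Then $a\in F^{-1}\bigl((A_ic\setminus B_{i,c})c^{-1}\bigr)$. There are at most $|F|\cdot\delta|A_i|=\gamma|A_i|$ such $a$.
\end{enumerate}
Therefore $|A_i\setminus J_{i,c}|<2\gamma|A_i|$, which is better than the required $3\gamma$. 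The main point of care throughout is keeping left multiplication by $F$ and right translation by centers $c$ separate. The boundary $\partial_F$ is defined through left translates $Kc$, which is exactly what matches right-translated tiles $A_ic$.

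\emph{Property (3).} If $p\in P=J_{i,c}c$, then $Fp\subset B_{i,c}\subset A_ic\subset S$. This gives $F\widetilde S\subset S$, and with $e\in F$ also $\widetilde S\subset S$. For the cardinality we count tile by tile. On one hand $|\widetilde S|=\sum_{i,c}|J_{i,c}|>(1-3\gamma)\sum_i|A_i||C_i|$. On the other hand the covering condition gives
\[
(1-\delta)|S|\le\Bigl|\bigcup_i A_iC_i\Bigr|\le\sum_i|A_i||C_i|.
\]
Combining, $|\widetilde S|>(1-3\gamma)(1-\delta)|S|\ge(1-3\gamma)(1-\gamma)|S|\ge(1-4\gamma)|S|$. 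The only remaining obstacle is bookkeeping: checking that choosing the $B_{i,c}$ consistently across types is legitimate, which is immediate from the disjointness of the $A_iC_i$.
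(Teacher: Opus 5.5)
Your proof is correct. The paper gives no argument for this lemma; it only records it as a reformulation of the geometric part of \cite[Lemma~5.2]{RTZ23}, so your write-up can serve as a self-contained replacement for that citation.

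Your construction is the standard one. You take the disjointified pieces $B_{i,c}\subset A_ic$, which come from $\delta$-disjointness within each type together with $A_iC_i\cap A_jC_j=\varnothing$ across types. You then shrink each piece to its $F$-interior $\{p\in A_ic: Fp\subset B_{i,c}\}$. Every step checks out:
\begin{itemize}
\item $e\in F$ gives $J_{i,c}c\subset B_{i,c}$, which yields both the disjointness and $FP\cap P'=\varnothing$.
\item The defect $A_i\setminus J_{i,c}$ lies in $\partial_F A_i$ (fewer than $\frac{\gamma}{|F|}|A_i|$ elements) together with $F^{-1}\bigl((A_ic\setminus B_{i,c})c^{-1}\bigr)$ (fewer than $|F|\delta|A_i|=\gamma|A_i|$ elements).
\item The final count needs only $\sum_i|A_i||C_i|\ge|\bigcup_iA_iC_i|\ge(1-\delta)|S|$ and $(1-3\gamma)(1-\gamma)\ge 1-4\gamma$.
\end{itemize}

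Since $e\in F$, you count the overlap loss and its $F$-neighbourhood in a single set. As a result you actually get $|J_{i,c}|>(1-2\gamma)|A_i|$, which is stronger than required. Your argument also shows that the hypothesis $F=F^{-1}$ plays no role in this geometric statement. The smallness of $\gamma$ enters only through $1-3\gamma>0$.
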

We also recall the notion of a quasitiling of the whole group
$G$.
\begin{definition}\cite[Definition 3.1]{DHZ19}
	A quasitiling $\mathcal T$ of $G$ is determined by the following
	two objects:
	\begin{enumerate}
		\item a finite collection
		$\mathcal S(\mathcal T)\subset\mathcal F(G)$
		of finite subsets of $G$ containing the identity element $e$,
		called the shapes;
		
		\item a finite family
		$\mathcal C(\mathcal T)
		=
		\{C(S):S\in\mathcal S(\mathcal T)\}$
		of pairwise disjoint subsets of $G$, called the center sets.
\end{enumerate}
	The quasitiling is the family
	$\mathcal T=\{(S,c):S\in\mathcal S(\mathcal T),\ c\in C(S)\}$.
	We require that the map
	$(S,c)\mapsto Sc$
	is injective. Depending on the context, a tile of $\mathcal T$
	means either the set $Sc$ or the pair $(S,c)$.
\end{definition}
 We next recall the notions of a tiling and a congruent sequence
of tilings.
\begin{definition}
\cite[Definitions~3.1 and~3.2]{DHZ19}
A quasitiling $\mathcal T$ is called an \emph{exact tiling},
or simply a \emph{tiling}, if it is a partition of $G$,
equivalently,
$G=\bigcup_{T\in\mathcal T}T$
and
\[
T\cap T'=\emptyset
~~
\text{for all distinct }T,T'\in\mathcal T.
\]
Let $\{\mathcal T_k\}_{k\ge1}$ be a sequence of tilings of $G$.
We say that $\{\mathcal T_k\}_{k\ge1}$ is \emph{congruent} if,
for every $k\ge1$, each tile of $\mathcal T_{k+1}$ is a union
of tiles of $\mathcal T_k$.
\end{definition}
The following definition gives the topological entropy of the system $(X_{\mathcal{T}}, G)$.
\begin{definition}\cite[Definition 3.5]{DHZ19}\label{def:2.2}
	Let $\mathcal{T}$ be a quasitiling of $G$ with shape set $\mathcal S(\mathcal{T})$ and tiling centers $C(S)$ for each $S\in \mathcal S(\mathcal{T})$. 
	We encode $\mathcal{T}$ as a configuration $x_{\mathcal{T}} \in \mathcal{A}^G$, where the alphabet is $\mathcal{A}= \mathcal S(\mathcal{T}) \cup \{0\}$, by setting
	\[
	x_{\mathcal{T}}(g) = 
	\begin{cases}
	S, & \text{if } g \in C(S) \text{ for some } S\in \mathcal S(\mathcal{T}),\\[4pt]
	0, & \text{otherwise}.
	\end{cases}
	\]
	The shift action of $G$ on $\mathcal{A}^G$ is given by $(g y)(h) = y(h g)$ for all $y\in\mathcal{A}^G$ and $g,h\in G$. 
	Consider the orbit of $x_{\mathcal{T}}$ under this action: $\{ g x_{\mathcal{T}} \mid g \in G \}$. 
	Take its closure in $\mathcal{A}^G$ with respect to the product topology; denote this closed, shift-invariant set by $X_{\mathcal{T}}$:
	$$X_{\mathcal{T}} = \overline{ \{\, g x_{\mathcal{T}} : g \in G \,\} }.$$
	The dynamical system $(X_{\mathcal{T}}, G)$ is called the \emph{dynamical quasitiling} generated by $\mathcal{T}$. 
	If $\mathcal{T}$ is a tiling, then $X_{\mathcal{T}}$ is called a \emph{dynamical tiling}. Following the convention used throughout this paper, the \emph{entropy} of $\mathcal{T}$, denoted $h(\mathcal{T})$, is defined as the topological entropy of the system $(X_{\mathcal{T}}, G)$.
\end{definition}
Using the boundary-invariance convention of
\cite[Lemma~2.4]{RTZ23}, Theorem~5.2 of \cite{DHZ19}
gives the following zero-entropy tiling lemma.
\begin{lemma}\label{lem:2.6}
	Let $\{\varepsilon_k\}_{k\geq 1}$ be a sequence of positive numbers with 
	$\varepsilon_k \to 0$, and let $\{K_k\}_{k\geq 1}$ be a sequence of finite subsets of $G$. 
	Then there exists a congruent sequence of tilings $\{\mathcal{T}_k\}_{k\geq 1}$ of $G$ 
	such that every shape appearing in $\mathcal{T}_k$ is $(K_k,\varepsilon_k)$-invariant and $h(\mathcal{T}_k)=0$ for each $k$.
\end{lemma}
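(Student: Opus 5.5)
The plan is to derive the statement directly from \cite[Theorem~5.2]{DHZ19}. The only real work is to translate between the invariance notion used there and the boundary notion $|\partial_K F|/|F|<\delta$ used in this paper. As I recall it, \cite[Theorem~5.2]{DHZ19} says the following: for any sequence of finite sets $K'_k\subset G$ and any $\varepsilon'_k\to0$, there is a congruent sequence of tilings $\{\mathcal T_k\}$ of $G$ with $h(\mathcal T_k)=0$ whose shapes $S$ satisfy the ``Følner-type'' invariance
\[
|K'_kS\,\triangle\, S|<\varepsilon'_k|S|.
\]
The conclusion of that theorem is insensitive to the choice of base of logarithm, since zero entropy is zero in either convention. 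So it suffices to choose the data $K'_k,\varepsilon'_k$ so that this invariance implies $(K_k,\varepsilon_k)$-invariance in the present sense. This is the content of the comparison in \cite[Lemma~2.4]{RTZ23}.

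The comparison estimate I would prove is as follows. Let $K\in\mathcal F(G)$ and $F\in\mathcal F(G)$, and put $K'=K\cup K^{-1}\cup\{e\}$. Take $c\in\partial_KF$, so that $Kc$ meets both $F$ and $G\setminus F$. There are two cases.
\begin{enumerate}
\item If $c\notin F$, then $c\in K^{-1}F\setminus F\subset K'F\setminus F$.
\item If $c\in F$, then some $k\in K$ has $kc\notin F$, so $kc\in KF\setminus F\subset K'F\setminus F$. The assignment $c\mapsto kc$ is at most $|K|$-to-one.
\end{enumerate}
Combining the two cases gives
\[
|\partial_KF|\le (1+|K|)\,|K'F\setminus F|\le(1+|K|)\,|K'F\triangle F|.
\]
Hence $|K'F\triangle F|<\frac{\varepsilon}{1+|K|}|F|$ implies that $F$ is $(K,\varepsilon)$-invariant.

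Now apply \cite[Theorem~5.2]{DHZ19} with
\[
K'_k=K_k\cup K_k^{-1}\cup\{e\},\qquad \varepsilon'_k=\frac{\varepsilon_k}{1+|K_k|}.
\]
Since $\varepsilon'_k\le\varepsilon_k\to0$, this is an admissible input. The resulting congruent tilings have zero entropy, and by the estimate above every shape of $\mathcal T_k$ is $(K_k,\varepsilon_k)$-invariant.

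The main obstacle I expect is bookkeeping rather than mathematics. I need to check the exact form of the hypotheses in \cite[Theorem~5.2]{DHZ19}: whether it requires $\varepsilon_k\to0$ or allows arbitrary positive $\varepsilon_k$, whether the $K_k$ must contain $e$ or be increasing, and whether its invariance is stated as left-invariance $|KS\triangle S|$ in the convention matching the shift $(gx)(h)=x(hg)$. If the theorem demands nested $K_k$, I would replace $K'_k$ by $\bigcup_{j\le k}K'_j$, which only strengthens the invariance obtained. If the side convention differs, I would run the same counting argument with $F K$ in place of $KF$.
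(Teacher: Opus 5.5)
Your proposal is correct and follows the paper's route: the paper obtains this lemma directly from \cite[Theorem~5.2]{DHZ19}, converting invariance notions via \cite[Lemma~2.4]{RTZ23} and noting that the base of the logarithm does not affect zero entropy. Your estimate $|\partial_K F|\le(1+|K|)\,|K'F\setminus F|$ with $K'=K\cup K^{-1}\cup\{e\}$, applied with $\varepsilon'_k=\varepsilon_k/(1+|K_k|)$, is a valid self-contained replacement for the cited comparison lemma.
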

Although the authors of \cite{DHZ19} use the convention $\log=\log_2$, changing the base of the logarithm only multiplies entropy by a positive constant. In particular, the property of having zero entropy is independent of the base of the logarithm. Hence Lemma~\ref{lem:2.6} applies under our convention $\log=\log_e$.
\begin{lemma}\cite[Lemma 7.1]{RTZ23}\label{lem:2.7}
	Let $K\in \mathcal{F}(G)$, $\theta>0$, and let 
	$\mathcal{T}$ be a tiling of $G$ whose set of shapes is 
	$\mathcal{S}=\{S_1,\ldots,S_\ell\}.$
	Assume that $K$ is 
	$
	\left(\bigcup_{S\in\mathcal{S}} S,\,
	\frac{\theta}{\left|\bigcup_{S\in\mathcal{S}} S\right|}\right)
	\text{-invariant}.
	$
	Define	$\tilde{K}=\bigcup \{\,T\in\mathcal{T} : T\subseteq K\,\}.$
	Then $|\tilde{K}|>(1-\theta)|K|$.
\end{lemma}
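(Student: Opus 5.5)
This is a cited lemma from RTZ23; I will sketch a direct argument. Set $F:=\bigcup_{S\in\mathcal S}S$ and note that $e\in S$ for every shape, so $e\in F$. Write $\tilde K$ for the union of the tiles of $\mathcal T$ contained in $K$. Every $g\in K\setminus\tilde K$ lies in a unique tile $T=Sc$ of $\mathcal T$, because $\mathcal T$ is a partition of $G$. Since $g\notin\tilde K$, this tile is not contained in $K$. The plan is to charge each such $g$ to a point of the boundary $\partial_F K$.

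The key step is to take the tile center $c$ as the charged point. Because $e\in S$, we have $c\in Sc\subset Fc$.

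\textbf{Case 1: $c\in K$.} Then $Fc\cap K\ni c$, and $Fc\not\subset K$ since $Sc\not\subset K$. Hence $c\in\partial_F K$.

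\textbf{Case 2: $c\notin K$.} Then $Fc\cap(G\setminus K)\ni c$, and $Fc\cap K\ni g$ because $g\in Sc\subset Fc$. Hence again $c\in\partial_F K$.

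In both cases the center $c$ lies in $\partial_F K$. The map $g\mapsto c$ has fibers contained in a single tile $Sc$, so each fiber has size at most $|S|\le|F|$. Therefore
\[
|K\setminus\tilde K|\le |F|\,|\partial_F K|<|F|\cdot\frac{\theta}{|F|}|K|=\theta|K|,
\]
where the strict inequality is exactly the $(F,\theta/|F|)$-invariance hypothesis. This gives $|\tilde K|>(1-\theta)|K|$.

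The only delicate point is the fiber bound. Distinct tiles have distinct pairs $(S,c)$, but the same center $c$ could in principle serve as center for several shapes. This cannot happen here: the center sets $C(S)$ are pairwise disjoint by the definition of a quasitiling. Consequently $c$ determines its tile, and each fiber of $g\mapsto c$ has at most $|F|$ elements. Beyond this, the remaining care is only in matching the boundary convention $Kc\cap F\ne\emptyset$ used in the paper, with the roles of $F$ and $K$ placed as above.
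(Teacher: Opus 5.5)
Your argument is correct. Every point of $K\setminus\tilde K$ lies in a tile $Sc$ that meets both $K$ and $G\setminus K$, so its center $c$ lies in $\partial_F K$, where $F=\bigcup_{S\in\mathcal S}S$. Charging at most $|F|$ points to each boundary element then gives $|K\setminus\tilde K|\le |F|\,|\partial_F K|<\theta|K|$. This is the standard counting argument behind the cited result; the paper itself only quotes \cite[Lemma 7.1]{RTZ23} and gives no proof of its own.

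Two of your steps can be dropped. The case split is unnecessary, because the point $g\in Sc\cap K$ together with any point of $Sc\setminus K$ already shows $c\in\partial_F K$. The appeal to pairwise disjoint center sets is also unnecessary, because the whole fiber over $c$ lies in $Fc$, which has exactly $|F|$ elements.
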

\subsection{Invariant measures and metric entropy}
In this subsection, we present the notions of metric entropy and the variational principle in the setting of $G$-topological dynamical systems.

 As $X$ is compact, both $\mathcal{M}\left( X\right)$ and $\mathcal{M}\left( X,G\right)$ are compact metrizable spaces under the weak-* topology \cite[Theorem 6.5 and Theorem 6.10]{W82}.
Denote by $C(X,\mathbb{R})$ the space of real-valued continuous functions on $X$ endowed with the supremum norm $\|\cdot\|_\infty$. Let $f\in C(X,\mathbb R)$ and $\mu\in \mathcal{M}(X)$. We denote $ \langle f, \mu \rangle = \int_X f d\mu$. There exists a countable and separating set of continuous functions $\left\lbrace f_1, f_2, \ldots\right\rbrace $ with $0 \leq f_k \leq 1$ such that
$$D(\mu,\nu):= \| \mu - \nu \|:=\sum_{i=1}^\infty\frac{|\langle f_i, \mu-\nu\rangle|}{2^i}$$
defines a compatible metric for the weak-* topology on $\mathcal{M}\left( X\right)$ \cite[Theorem 6.4]{W82}. By the definition, $D(\mu, \nu) \leq 1 $ for any $\mu,\nu \in \mathcal{M}\left( X\right)$. Since $\mathcal M(X)$ is compact, we denote its diameter with
respect to $D$ by
$D^*:=\max\left\{D(\mu,\nu):
\mu,\nu\in\mathcal M(X)
\right\}$.
\begin{proposition}\cite[Theorem 6.4]{W82}\label{Pro:2.4}
The metric $D$ defined above induces the weak-$*$ topology on
$\mathcal M(X)$ and satisfies
\[
D\left(
\sum_{k=1}^{n}a_k\mu_k,
\sum_{k=1}^{n}a_k\nu_k
\right)
\le
\sum_{k=1}^{n}a_kD(\mu_k,\nu_k).
\]
Here $n\in\mathbb N$,
$\mu_k,\nu_k\in\mathcal M(X)$, and $a_k>0$ for
$k=1,\dots,n$, with
$\sum_{k=1}^{n}a_k=1$.
\end{proposition}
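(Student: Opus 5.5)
The plan is to prove the two assertions of Proposition~\ref{Pro:2.4} separately: that $D$ is a metric inducing the weak-$*$ topology on $\mathcal M(X)$, and that $D$ satisfies the convexity inequality. First I fix the family $\{f_i\}$ concretely. Since $X$ is a compact metric space, $C(X,\mathbb R)$ is separable. I therefore take $\{f_1,f_2,\dots\}$ to be a countable dense subset of
\[
\{f\in C(X,\mathbb R):0\le f\le 1\}.
\]
Every $f\in C(X,\mathbb R)$ can be written as $f=a g+b$ with $0\le g\le1$ and constants $a,b$. Hence the linear span of $\{f_i\}\cup\{1\}$ is dense in $C(X,\mathbb R)$ for $\|\cdot\|_\infty$, and this is the precise sense of ``separating'' that I will use.

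For the metric axioms, symmetry is clear and the triangle inequality holds termwise. Each summand satisfies $|\langle f_i,\mu-\nu\rangle|\le 1$, so the series converges and $D\le 1$. For definiteness, suppose $D(\mu,\nu)=0$. Then $\langle f_i,\mu\rangle=\langle f_i,\nu\rangle$ for all $i$. By density of the span together with $\mu(X)=\nu(X)=1$, we get $\langle f,\mu\rangle=\langle f,\nu\rangle$ for all $f\in C(X,\mathbb R)$. The Riesz representation theorem then gives $\mu=\nu$.

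To identify the topologies, I avoid sequences and use compactness. Each map $\mu\mapsto|\langle f_i,\mu-\nu\rangle|$ is weak-$*$ continuous. The series defining $D(\cdot,\nu)$ converges uniformly by the Weierstrass $M$-test with weights $2^{-i}$. Hence the map $\mu\mapsto D(\mu,\nu)$ is weak-$*$ continuous, so every $D$-open ball is weak-$*$ open. Thus the identity map
\[
(\mathcal M(X),\text{weak-}*)\longrightarrow(\mathcal M(X),D)
\]
is a continuous bijection. By Banach--Alaoglu (as recalled from \cite{W82}), $\mathcal M(X)$ is weak-$*$ compact, and the metric space $(\mathcal M(X),D)$ is Hausdorff. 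A continuous bijection from a compact space onto a Hausdorff space is a homeomorphism, so the two topologies coincide.

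For the convexity inequality, I use linearity of the integral in the measure:
\[
\Big\langle f_i,\sum_k a_k\mu_k-\sum_k a_k\nu_k\Big\rangle=\sum_k a_k\langle f_i,\mu_k-\nu_k\rangle .
\]
The triangle inequality and $a_k>0$ give the bound $\sum_k a_k|\langle f_i,\mu_k-\nu_k\rangle|$. Multiplying by $2^{-i}$ and summing over $i$, I interchange the finite sum over $k$ with the series over $i$, which is allowed since all terms are nonnegative. This yields exactly $\sum_k a_kD(\mu_k,\nu_k)$. The only mildly delicate step in the whole argument is the topology identification, and the compact-to-Hausdorff argument disposes of it cleanly.
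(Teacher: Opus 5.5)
Your proof is correct, but for the topological part it takes a different route from the source the paper relies on; the paper gives no argument and only cites \cite[Theorem 6.4]{W82}.

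Walters compares neighbourhoods directly. Truncating the tail of the series shows that every $D$-ball about $\mu$ contains a weak-$*$ basic neighbourhood defined by finitely many of the test functions. Density of the test functions in $C(X)$ gives the converse: you approximate the finitely many functions defining a basic weak-$*$ neighbourhood by suitable $f_i$ and shrink the radius of the $D$-ball.

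You use density only to get definiteness of $D$. You obtain the easy inclusion from the weak-$*$ continuity of $\mu\mapsto D(\mu,\nu)$. The other inclusion then comes for free from compactness and the compact-to-Hausdorff lemma. This is shorter, avoids all $\varepsilon$-bookkeeping, and uses nothing about $\{f_i\}$ beyond the fact that $D$ is a metric.

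The price is that weak-$*$ compactness of $\mathcal M(X)$ becomes an independent input. It must come from Banach--Alaoglu, together with the remark that, via Riesz, $\mathcal M(X)$ is a weak-$*$ closed subset of the unit ball of $C(X)^*$. It should not come from \cite[Theorem 6.5]{W82}, which the paper quotes for compactness. Walters proves that theorem by sequential compactness, using the metrizability of Theorem 6.4, so invoking it here would be circular. The direct neighbourhood argument has no such dependence; it is what makes that sequential-compactness proof possible.

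The convexity inequality is the same one-line linearity computation in both approaches.
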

Denote by $\operatorname{ext}(K)$ the set of extreme points of
a convex set $K$. It is well known that
$\mathcal M_e(X,G)=\operatorname{ext}\bigl(\mathcal M(X,G)\bigr)$,
and $\mathcal M(X,G)$ is a Choquet simplex, that is, every
$\mu\in\mathcal M(X,G)$ is the barycenter of a unique probability
measure supported on
$\operatorname{ext}\bigl(\mathcal M(X,G)\bigr)$.
Moreover, $\mathcal{M}_{e}\left( X,G\right)$ is a $G_{\delta}$ subset of $\mathcal{M}(X,G)$. If $\mathcal{M}_{e}\left( X,G\right)$ is dense in $\mathcal{M}\left( X,G\right)$, then $\mathcal{M}_{e}\left( X,G\right)$ is a residual subset of $\mathcal{M}\left( X,G\right)$ and in this case $\mathcal{M}\left( X,G\right)$ is a Poulsen simplex if and only if it is not a singleton. The structure of the Poulsen simplex has been studied in \cite{LOS78}. Some important facts are listed below. Readers are referred to \cite{P01} for more details on Choquet simplices.
\begin{definition}\cite[Section 3.2]{RS16}
Let $\mu\in \mathcal M(X,G)$ and let $\alpha$ be a finite measurable partition of $X$.
For $F\in\mathcal F(G)$, set
$\alpha^F=\bigvee_{g\in F}g^{-1}\alpha$
and
$H_\mu(\alpha)=-\sum_{A\in\alpha}\mu(A)\log\mu(A)$.
The metric entropy of $\mu$ with respect to $\alpha$ is defined by
\[
h_\mu(\alpha,G)
=
\lim_{n\to\infty}\frac{1}{|K_n|}H_\mu(\alpha^{K_n}).
\]
The above limit exists and is independent of the choice of the F{\o}lner sequence.
The metric entropy of $(X,G)$ with respect to $\mu$ is defined by
\[
h_\mu(X,G)=\sup_{\alpha\in \mathcal P_X} h_\mu(\alpha,G),
\]
where $\mathcal P_X$ denotes the set of all finite measurable partitions of $X$.
\end{definition}
Throughout this article, by entropy map we mean the map $\mu\mapsto h_{\mu}(X, G)$ defined on $\mathcal{M}\left( X,G\right).$
\begin{proposition}\cite{O85,OP82}
For any $\mu,\nu\in\mathcal{M}\left( X,G\right)$ and $\lambda\in[0,1]$, we have
$$h_{\lambda\mu+(1-\lambda)\nu}(X, G)=\lambda h_{\mu}(X, G)+(1-\lambda)h_{\nu}(X, G).$$
\end{proposition}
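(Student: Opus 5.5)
The plan is to prove the identity first at the level of a fixed finite partition, and then take the supremum over partitions. Write $m=\lambda\mu+(1-\lambda)\nu$. We may assume $0<\lambda<1$, since the cases $\lambda\in\{0,1\}$ are trivial.

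\textbf{Step 1: a finite-partition estimate.} Fix a finite measurable partition $\beta$ of $X$ and set $\phi(t)=-t\log t$. Because $\phi$ is concave, for every $B\in\beta$ we have
\[
\phi(m(B))\ge \lambda\phi(\mu(B))+(1-\lambda)\phi(\nu(B)).
\]
Summing over $B\in\beta$ gives $H_m(\beta)\ge \lambda H_\mu(\beta)+(1-\lambda)H_\nu(\beta)$.

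For the reverse bound, write
\[
\phi(\lambda a+(1-\lambda)b)=-\lambda a\log(\lambda a+(1-\lambda)b)-(1-\lambda)b\log(\lambda a+(1-\lambda)b).
\]
Since $\lambda a+(1-\lambda)b\ge\lambda a$, the first term is at most $-\lambda a\log(\lambda a)$. Similarly, since $\lambda a+(1-\lambda)b\ge(1-\lambda)b$, the second term is at most $-(1-\lambda)b\log((1-\lambda)b)$. Summing over $B\in\beta$ yields
\[
H_m(\beta)\le \lambda H_\mu(\beta)+(1-\lambda)H_\nu(\beta)+\phi(\lambda)+\phi(1-\lambda)\le \lambda H_\mu(\beta)+(1-\lambda)H_\nu(\beta)+\log 2 .
\]

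\textbf{Step 2: pass to partition entropies.} Apply Step 1 with $\beta=\alpha^{K_n}$ for a fixed finite partition $\alpha$. Divide by $|K_n|$ and let $n\to\infty$; the limits exist by the definition of $h_\mu(\alpha,G)$. The additive constant $\log 2$ contributes $\log 2/|K_n|\to0$, because $|K_n|\to\infty$ for a F{\o}lner sequence in an infinite group. Hence
\[
h_m(\alpha,G)=\lambda h_\mu(\alpha,G)+(1-\lambda)h_\nu(\alpha,G)\quad\text{for every }\alpha\in\mathcal P_X.
\]

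\textbf{Step 3: take suprema.} For the upper bound, the identity of Step 2 gives $h_m(\alpha,G)\le \lambda h_\mu(X,G)+(1-\lambda)h_\nu(X,G)$ for every $\alpha$; taking the supremum over $\alpha$ gives $h_m(X,G)\le \lambda h_\mu(X,G)+(1-\lambda)h_\nu(X,G)$.

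For the lower bound, fix partitions $\alpha_1,\alpha_2$ and set $\alpha=\alpha_1\vee\alpha_2$. For any invariant measure, $h_\cdot(\alpha,G)\ge h_\cdot(\alpha_i,G)$, because $\alpha^{K_n}$ refines $\alpha_i^{K_n}$ and static entropy is monotone under refinement. Therefore
\[
h_m(X,G)\ge h_m(\alpha,G)\ge \lambda h_\mu(\alpha_1,G)+(1-\lambda)h_\nu(\alpha_2,G).
\]
Taking suprema over $\alpha_1$ and $\alpha_2$ independently gives the reverse inequality. This argument also covers infinite values, with the usual conventions.

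\textbf{Main obstacle.} The only delicate point is the upper bound in Step 1. It needs an error term independent of the partition, here $\log 2$, so that it vanishes after dividing by $|K_n|$. The common refinement in Step 3 is what allows the two separate suprema defining $h_\mu(X,G)$ and $h_\nu(X,G)$ to be realised along a single family of partitions.
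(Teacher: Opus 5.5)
Your proof is correct and complete: the two-sided estimate $0\le H_m(\beta)-\lambda H_\mu(\beta)-(1-\lambda)H_\nu(\beta)\le\log 2$ applied to $\alpha^{K_n}$ gives $h_m(\alpha,G)=\lambda h_\mu(\alpha,G)+(1-\lambda)h_\nu(\alpha,G)$ after dividing by $|K_n|\to\infty$, and the common refinement $\alpha_1\vee\alpha_2$ gives the lower bound; this is the standard affinity argument, the amenable analogue of the classical proof for $\mathbb Z$-actions. The paper gives no proof here and only cites Ollagnier and Ollagnier--Pinchon, and the two inputs you take from outside the computation are available in the paper's setting: existence of the limit defining $h_\mu(\alpha,G)$, and $|K_n|\to\infty$ for a F{\o}lner sequence in an infinite group.
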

\begin{proposition}[Variational Principle]\cite{O85,OP82}
For any system $(X,G)$, we have
$$h(X, G)=\sup\left\lbrace h_{\mu}(X, G):\mu\in\mathcal{M}\left( X,G\right)\right\rbrace =\sup\left\lbrace h_{\mu}(X, G):\mu\in\mathcal{M}_{e}\left( X,G\right)\right\rbrace.$$		
\end{proposition}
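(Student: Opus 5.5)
This is the variational principle of Ollagnier and Pinheiro, quoted from \cite{O85,OP82}, so it will not be reproved in full. I indicate the standard route I would follow, adapted to the present setting with the fixed tempered F{\o}lner sequence $\{K_n\}$.

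\textbf{Lower bound.} First I would show $h_\mu(X,G)\le h(X,G)$ for every $\mu\in\mathcal M(X,G)$. Fix a finite partition $\alpha=\{A_1,\dots,A_k\}$. By regularity choose compact sets $B_i\subset A_i$ with $\mu(A_i\setminus B_i)$ small, and form the partition $\beta=\{B_0,B_1,\dots,B_k\}$ with $B_0=X\setminus\bigcup B_i$. Then $H_\mu(\alpha\mid\beta)$ is small, so $h_\mu(\alpha,G)\le h_\mu(\beta,G)+H_\mu(\alpha\mid\beta)$. Each of $B_0\cup B_i$ is open, so every point of $X$ meets at most two atoms of $\beta$ within a fixed Lebesgue-type scale. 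Counting atoms of $\beta^{K_n}$ that meet a $(K_n,\varepsilon)$-spanning set then gives $H_\mu(\beta^{K_n})\le \log(\text{span}(K_n,\varepsilon))+|K_n|\log 2$. This yields $h_\mu(\beta,G)\le h(X,G)+\log 2$. To remove the $\log2$, I would apply the same bound to the $F$-power action, replacing $K_n$ by $FK_n$ for large $F$ and using the F{\o}lner property, which gives $h_\mu\le h(X,G)+\log 2/|F|$. Equivalently, one can pass to a product with a zero-entropy action.

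\textbf{Upper bound.} Next I would show $h(X,G,\varepsilon)\le \sup_\mu h_\mu$. Take maximal $(K_n,\varepsilon)$-separated sets $E_n$ and the empirical measures $\sigma_n=|E_n|^{-1}\sum_{x\in E_n}\delta_x$. Define $\mu_n=|K_n|^{-1}\sum_{g\in K_n}g_*\sigma_n$ and let $\mu$ be a weak-$*$ limit point along a subsequence realising the limsup. The F{\o}lner property makes $\mu$ invariant. Choose a partition $\xi$ of diameter less than $\varepsilon$ with $\mu$-null boundaries. Then $H_{\sigma_n}(\xi^{K_n})=\log|E_n|$.

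\textbf{Main obstacle.} The hardest step is the amenable replacement of Misiurewicz's block decomposition of $\{0,\dots,n-1\}$. For a fixed finite $F$, I would quasi-tile $K_n$ by translates $Fc$ using Proposition~\ref{pro:quasi-tiling}. Subadditivity of $H$ and concavity then give
\[
\log|E_n|\le \sum_{c}H_{c_*\sigma_n}(\xi^{F})+o(|K_n|)\lesssim \frac{|K_n|}{|F|}H_{\mu_n}(\xi^F)+\eta|K_n|\log|\xi|.
\]
In the alternative Ornstein--Weiss approach one averages over all translates, which avoids exact tiling. Letting $n\to\infty$ and using the null boundaries gives $h(X,G,\varepsilon)\le \frac1{|F|}H_\mu(\xi^F)+\eta\log|\xi|$. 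Finally, letting $F$ run through the F{\o}lner sequence and $\eta\to0$ gives $h(X,G,\varepsilon)\le h_\mu(X,G)$.

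\textbf{Ergodic supremum.} Equality of the supremum over $\mathcal M(X,G)$ with the supremum over $\mathcal M_e(X,G)$ follows from the ergodic decomposition together with the affinity of the entropy map (the preceding proposition, extended to integral decompositions, i.e. Jacobs' formula $h_\mu=\int h_{\mu_\omega}\,d\tau(\omega)$). Since the integrand is at most the supremum over ergodic measures, $h_\mu$ is bounded by that supremum.
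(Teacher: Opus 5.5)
The paper does not prove this statement; it only quotes it from \cite{O85,OP82}. Your outline therefore has to stand on its own. The overall Misiurewicz scheme is the right one, but two steps fail as written.

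\textbf{Removing the $\log 2$.} A general countable amenable group has no ``$F$-power action''. There is no analogue of $T^m$ or of $h_\mu(T^m)=m\,h_\mu(T)$, and finite-index subgroups need not exist. Replacing $K_n$ by $FK_n$ only changes the F{\o}lner sequence, so both entropies stay the same. Replacing $\beta$ by $\beta^F$ gains nothing either, since a small ball can meet up to $2^{|F|}$ atoms of $\beta^F$. A product with a zero-entropy action also leaves both sides of your inequality unchanged, so it is not equivalent and does not help.

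The standard repair is the $m$-fold self-product $X^m$ with the diagonal action. Since $(\alpha^{\times m})^{K_n}=(\alpha^{K_n})^{\times m}$, you get $h_{\mu^{\otimes m}}(\alpha^{\times m},G)=m\,h_\mu(\alpha,G)$. Since $s(X^m,K_n,\varepsilon)\le s(X,K_n,\varepsilon/2)^m$, you get $h(X^m,G)\le m\,h(X,G)$. Applying your bound on $X^m$ then gives $m\,h_\mu(X,G)\le m\,h(X,G)+\log 2$ for every $m$.

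\textbf{The key inequality.} Proposition~\ref{pro:quasi-tiling} does not give an $\eta$-quasi-tiling of $K_n$ by translates of a single $F$. It requires $s\ge s_0(\eta)$ shapes with nested invariance, and the single-shape Ornstein--Weiss lemma only guarantees coverage of a proportion of order $\eta$.

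More seriously, even for a genuine quasi-tiling with centre set $C$, concavity bounds $\sum_{c\in C}H_{c_*\sigma_n}(\xi^F)$ only by $|C|\,H_{\bar\sigma_n}(\xi^F)$, where $\bar\sigma_n=|C|^{-1}\sum_{c\in C}c_*\sigma_n$. This is not $\mu_n$. Limit points of $\bar\sigma_n$ need not be invariant and depend on $F$, so you cannot fix $\mu$ first and then let $F$ run through a F{\o}lner sequence. This is exactly why Misiurewicz averages over all offsets.

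Your alternative of using all translates is the right one, but plain subadditivity does not suffice there. The family $\{Fc\cap K_n:c\in F^{-1}K_n\}$ covers each point of $K_n$ exactly $|F|$ times, and in general it cannot be split into $|F|$ partitions of $K_n$. The missing tool is Shearer's inequality
\[
|F|\,H_{\sigma_n}\bigl(\xi^{K_n}\bigr)\le\sum_{c\in F^{-1}K_n}H_{\sigma_n}\bigl(\xi^{Fc\cap K_n}\bigr).
\]
With $e\in F$, the terms with $Fc\subset K_n$ equal $H_{c_*\sigma_n}(\xi^F)$ with $c\in K_n$. The remaining terms number at most $|\partial_FK_n|$, and each is bounded by $|F|\log|\xi|$. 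Concavity then gives $\log|E_n|\le\frac{|K_n|}{|F|}H_{\mu_n}(\xi^F)+|\partial_FK_n|\log|\xi|$ with the correct $\mu_n$. From there the rest of your argument goes through: null boundaries, passage to the limit, and the ergodic decomposition via Jacobs' formula.
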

\begin{proposition}\cite[Theorem 1]{RS16}\label{pro:2.11}
If $(X,G)$ is asymptotically entropy expansive, then the entropy map $\mu\mapsto h_{\mu}(X, G)$ is upper semi-continuous in $\mathcal{M}\left( X,G\right)$. As a corollary, there is $\mu_{M}\in\mathcal{M}_{e}\left( X,G\right)$, which is called a measure of maximal entropy, such that $h_{\mu_{M}}(X, G)=h(X, G)$.
\end{proposition}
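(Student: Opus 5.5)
The plan is to deduce upper semi-continuity from a \emph{uniform tail estimate}: a bound on the entropy that a finite partition of small diameter fails to capture, controlled by the local entropy $h_{\mathrm{loc}}(\varepsilon,G)$. Fix $\mu\in\mathcal M(X,G)$ and $\varepsilon>0$. Choose a finite Borel partition $\alpha$ of $X$ with $\operatorname{diam}(A)<\varepsilon$ for every $A\in\alpha$ and $\mu(\partial A)=0$ for every $A\in\alpha$; such partitions exist because only countably many spheres around a fixed point can carry positive $\mu$-mass. The target inequality, valid for every $\nu\in\mathcal M(X,G)$, is
\[
h_\nu(X,G)\le h_\nu(\alpha,G)+h_{\mathrm{loc}}(\varepsilon,G).
\]

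To prove this inequality, I would first take a refining partition $\beta$ and bound the conditional quantity $\frac{1}{|K_n|}H_\nu(\beta^{K_n}\mid\alpha^{K_n})$. Each atom of $\alpha^{K_n}$ lies in a Bowen ball $B_{K_n}(x,\varepsilon)$. By the Shannon--McMillan--Breiman theorem for tempered F{\o}lner sequences (Lindenstrauss), applied to an ergodic component, most of the mass sits on a compact set $A$ with $\nu(A)>\sigma$. On $A$, the number of $\beta^{K_n}$-atoms that a typical $\alpha^{K_n}$-atom meets with non-negligible mass is at most $s(K_n,\delta,\varepsilon,A)$, up to subexponential factors coming from the fine scale $\delta$ below the Lebesgue number of $\beta$. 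This gives $h_\nu(\beta,G)\le h_\nu(\alpha,G)+h_{\nu,\mathrm{loc}}(\varepsilon,G)$ for ergodic $\nu$. The estimate then passes to all invariant $\nu$ because the metric entropy is affine (ergodic decomposition), and local entropy is bounded by the supremum. Taking the supremum over $\beta$ yields the displayed inequality.

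Next I would show that $\nu\mapsto h_\nu(\alpha,G)$ is upper semi-continuous at $\mu$. By the Ornstein--Weiss subadditivity lemma, $h_\nu(\alpha,G)=\inf_{F}\frac{1}{|F|}H_\nu(\alpha^F)$, where the infimum runs over a suitable cofinal family of F{\o}lner sets. Each map $\nu\mapsto H_\nu(\alpha^F)$ is continuous at $\mu$ since the boundaries of the atoms of $\alpha^F$ are $\mu$-null. An infimum of functions continuous at $\mu$ is upper semi-continuous at $\mu$. Hence, for $\nu_k\to\mu$,
\[
\limsup_k h_{\nu_k}(X,G)\le h_\mu(\alpha,G)+h_{\mathrm{loc}}(\varepsilon,G)\le h_\mu(X,G)+h_{\mathrm{loc}}(\varepsilon,G),
\]
and letting $\varepsilon\to0$ gives upper semi-continuity by the asymptotic entropy expansiveness assumption.

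For the corollary, an upper semi-continuous function attains its supremum on the compact set $\mathcal M(X,G)$, and by the Variational Principle this supremum is $h(X,G)$. The set of maximizers is nonempty and closed, and because the entropy is affine it is a face of the Choquet simplex $\mathcal M(X,G)$. By Krein--Milman this face has an extreme point, and extreme points of a face are extreme in $\mathcal M(X,G)$, hence ergodic; this gives $\mu_M$. The main obstacle I expect is the tail estimate in the second paragraph: matching the counting definition of $h(A,\varepsilon)$ (with $\delta$-separation inside $\varepsilon$-balls restricted to a compact set) against conditional entropies uniformly in $\nu$. This is exactly where the tempered F{\o}lner sequence and the amenable Shannon--McMillan--Breiman theorem are needed, following the scheme of Downarowicz's $\mathbb Z$-argument.
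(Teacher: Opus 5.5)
The paper does not prove this statement; it quotes it from Theorem 1 of RS16. Your architecture is the standard Newhouse scheme for this kind of local entropy (the paper's $h_{\mathrm{loc}}$ is Newhouse's, transplanted to F{\o}lner sequences):
\begin{itemize}
\item a tail estimate $h_\nu(X,G)\le h_\nu(\alpha,G)+h_{\mathrm{loc}}(\varepsilon,G)$;
\item upper semi-continuity of $\nu\mapsto h_\nu(\alpha,G)$ at $\mu$ when $\mu(\partial\alpha)=0$;
\item then $\varepsilon\to0$.
\end{itemize}
The second ingredient is fine, and so is your face/Krein--Milman argument for an ergodic maximal measure. For the face property you need $h(X,G)<\infty$, which holds because the tail estimate gives $h_\nu(X,G)\le\log|\alpha|+h_{\mathrm{loc}}(\varepsilon,G)$.

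The gap is in the step that carries all the content of the tail estimate. A partition has no Lebesgue number. No choice of the fine scale $\delta$ by itself makes a $(K_n,\delta)$-Bowen ball meet only subexponentially many atoms of $\beta^{K_n}$: balls whose orbit segments repeatedly pass near $\partial\beta$ can meet exponentially many. So, as written, nothing converts the separated-set count $s(K_n,\delta,\varepsilon,A)$ into a count of $\beta^{K_n}$-atoms.

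The missing idea is boundary control with respect to the ergodic $\nu$ at hand, combined with a visit count.
\begin{itemize}
\item Take $\beta'$ of small diameter with $\nu(\partial\beta')=0$ and set $\beta:=\alpha\vee\beta'$. You cannot ask for $\nu(\partial\alpha)=0$, since $\alpha$ is fixed by $\mu$.
\item Let $U_\delta:=\{w:\operatorname{dist}(w,X\setminus\beta'(w))\le\delta\}$, where $\beta'(w)$ is the atom containing $w$. Then $\nu(U_\delta)\to0$ as $\delta\to0$.
\item Given $\kappa>0$, pick $\delta$ with $\nu(U_\delta)<\kappa$. By SMB for $\alpha$ and $\beta$ and the ergodic theorem along $\{K_n\}$, for large $n$ there is $E_n\subset A$ with $\nu(E_n)>\sigma/2$ on which $\nu(\alpha^{K_n}(x))\ge e^{-|K_n|(h_\nu(\alpha,G)+\gamma)}$, $\nu(\beta^{K_n}(x))\le e^{-|K_n|(h_\nu(\beta,G)-\gamma)}$, and $gx\in U_\delta$ for at most $\kappa|K_n|$ elements $g\in K_n$.
\item In each $\alpha^{K_n}$-atom $P$ meeting $E_n$, a maximal $(K_n,\delta)$-separated subset of $P\cap E_n$ has at most $s(K_n,\delta,\varepsilon,A)$ points and is $\delta$-spanning.
\item If $z\in P\cap E_n$ is $\delta$-close to such a centre $y$, the $\beta^{K_n}$-names of $z$ and $y$ can differ only at the at most $\kappa|K_n|$ elements $g$ with $gz\in U_\delta$. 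So each centre accounts for at most $e^{|K_n|c(\kappa)}$ atoms of $\beta^{K_n}$, where $c(\kappa)\to0$ as $\kappa\to0$.
\end{itemize}
Hence
\[
\frac{\sigma}{2}\le e^{|K_n|(h_\nu(\alpha,G)+\gamma)}\,s(K_n,\delta,\varepsilon,A)\,e^{|K_n|c(\kappa)}\,e^{-|K_n|(h_\nu(\beta,G)-\gamma)},
\]
which yields $h_\nu(\beta,G)\le h_\nu(\alpha,G)+h(A,\varepsilon)+2\gamma+c(\kappa)$. The extra factor is exponential, not subexponential, and disappears only as $\kappa\to0$. This is harmless because $h(A,\varepsilon)$ dominates every fixed $\delta$. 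Equivalently, you may invoke Katok's entropy formula for ergodic actions along tempered F{\o}lner sequences, which packages this argument.

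Two smaller repairs:
\begin{itemize}
\item The passage to non-ergodic $\nu$ needs the integral identities over the ergodic decomposition for both $h_\nu(X,G)$ and $h_\nu(\alpha,G)$, not only the finite affinity quoted in the paper.
\item The identity $h_\nu(\alpha,G)=\inf_{F}\frac{1}{|F|}H_\nu(\alpha^F)$ is the Downarowicz--Frej--Romagnoli infimum rule (or follows from a quasi-tiling bound uniform in $\nu$). It is not a consequence of the Ornstein--Weiss lemma alone.
\end{itemize}
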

\subsection{Specification property}
In this subsection, we recall the specification property for general group actions, as presented in \cite[Section 6]{CL15}.
\begin{definition}
	Let $(X,G)$ be a topological dynamical system, and let $\rho$ be a metric on $X$. 
	The system $(X,G)$ is said to have the specification property if, for every 
	$\varepsilon>0$, there exists a nonempty finite subset $K\subset G$ such that for any 
	finite subsets $F_1,F_2,\ldots,F_m\subset G$ with
	\begin{center}
		$K F_i\cap F_j=\varnothing,\quad 1\leq i\neq j\leq m,$
	\end{center}
	and for any points $x_1,x_2,\ldots,x_m\in X$, one can find $y\in X$ satisfying
	\begin{center}
		$\rho(sx_i,sy)\leq \varepsilon,\quad s\in F_i,\ 1\leq i\leq m.$
	\end{center}
\end{definition}
Ren, Tian and Zhou proved in \cite{RTZ23} that the specification property implies entropy denseness. We now recall the definition of entropy-dense.
\begin{definition}\cite[Definition 2.7]{PS05}
Let $\mu\in\mathcal M(X,G)$. We say that $\mu$ is
entropy-approachable by ergodic measures if, for every
$\eta>0$ and every $h<h_\mu(X,G)$, there exists
$\nu\in\mathcal M_e(X,G)$ such that
\[
D(\mu,\nu)<\eta
\quad\text{and}\quad
h_\nu(X,G)>h.
\]
The system $(X,G)$ is entropy-dense if every $\mu\in\mathcal{M}\left( X,G\right)$ is entropy-approachable by ergodic measures.
\end{definition}
\begin{proposition}\cite[Theorem 2.8]{RTZ23}\label{pro:2.15}
If $(X,G)$ has the specification property, then $(X,G)$ is entropy-dense.
\end{proposition}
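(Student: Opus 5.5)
The statement to prove is Proposition~\ref{pro:2.15}: specification implies entropy-denseness. The plan follows the standard Pfister--Sullivan scheme adapted to amenable groups via quasi-tilings.

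Fix $\mu\in\mathcal M(X,G)$, $\eta>0$ and $h<h_\mu(X,G)$.

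\textbf{Step 1: reduce to a finite ergodic combination.} By the ergodic decomposition and affinity of the entropy map, approximate $\mu$ by a finite convex combination $\sum_{j=1}^{m}a_j\mu_j$ with the following properties:
\begin{itemize}
\item each $\mu_j$ is ergodic;
\item each $a_j$ is rational;
\item $D(\mu,\sum a_j\mu_j)<\eta/3$ and $\sum a_jh_{\mu_j}(X,G)>h$.
\end{itemize}
To get this, partition $\mathcal M_e$ into finitely many pieces of small diameter and on which entropy varies little (after truncating where necessary). Proposition~\ref{Pro:2.4} controls the distance of the mixtures.

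\textbf{Step 2: Katok-type separated sets for each ergodic piece.} For each $j$, use the Katok entropy formula for amenable groups along the tempered F{\o}lner sequence $\{K_n\}$, together with the pointwise ergodic theorem of Lindenstrauss. This gives, for large $n$, a scale $\varepsilon>0$ and a $(K_n,8\varepsilon)$-separated set $E_j\subset X$ with two properties:
\begin{itemize}
\item $|E_j|\ge e^{|K_n|(h_{\mu_j}-\delta)}$;
\item the empirical measure $\frac1{|K_n|}\sum_{g\in K_n}\delta_{gx}$ lies within $\eta/6$ of $\mu_j$ for every $x\in E_j$.
\end{itemize}

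\textbf{Step 3: glue with specification along a tiling.} Let $K$ be the specification set for $\varepsilon$. Using Proposition~\ref{pro:quasi-tiling} (or an exact tiling from Lemma~\ref{lem:2.6}), cover a large Følner set $F$ by translates $K_nc$, nearly disjoint, with proportions of tiles assigned to index $j$ approximately $a_j$.

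Shrink the tiles via Lemma~\ref{lem:quasi-tiling-refinement}, taking the lemma's set to be a symmetrised $K$. This makes the shrunken tiles $K$-separated, so specification applies. It costs only a small fraction of each tile, and this loss is absorbed into $\varepsilon$-separation and empirical-measure estimates by a uniform-continuity argument.

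For every choice of points from the $E_j$'s on the respective tiles, specification yields a shadowing point. Distinct choices give $(F,6\varepsilon)$-separated points, and every such point has empirical measure on $F$ within $2\eta/3$ of $\mu$.

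\textbf{Step 4: build the closed invariant set and find the measure.} Repeat this periodically or hierarchically over a congruent sequence of tilings to obtain a closed $G$-invariant set $\Lambda$ with two properties:
\begin{itemize}
\item every invariant measure on $\Lambda$ lies within $\eta$ of $\mu$;
\item $h(\Lambda,G)\ge\sum a_j(h_{\mu_j}-\delta)-o(1)>h$.
\end{itemize}
The variational principle on $\Lambda$ then gives an ergodic $\nu$ supported on $\Lambda$ with $h_\nu>h$ and $D(\mu,\nu)<\eta$.

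\textbf{Main obstacle.} The hardest part is Step 4. The orbit closure must not pick up measures far from $\mu$: a Følner set positioned arbitrarily relative to the tiles sees mixed statistics. To handle this, use the congruent zero-entropy tilings of Lemma~\ref{lem:2.6}. Lemma~\ref{lem:2.7} shows that any sufficiently invariant set is almost exactly a union of tiles, each carrying the correct statistics. The zero entropy of the tiling ensures that the tiling's own combinatorics does not spoil the entropy count.
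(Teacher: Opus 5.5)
The paper gives no proof of this statement; it quotes Theorem~2.8 of~\cite{RTZ23}. Your outline is the Pfister--Sullivan scheme whose amenable-group ingredients appear in this paper as Proposition~\ref{pro:2.17}, Lemma~\ref{lem:quasi-tiling-refinement}, Lemma~\ref{lem:2.7} and the construction of Subsection~\ref{con:3.1}, which carries it out for a single ergodic measure. That scheme is: reduce to a finite ergodic combination, take Katok-type separated sets, glue them by specification on refined subtiles of an exact tiling, take $\Lambda=\overline{GZ^\#}$, control all invariant measures on $\Lambda$ via complete tiles, and finish with the variational principle. The architecture is right, but one step fails as written.

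\textbf{The gap.} In Step~2 you only produce $(K_n,8\varepsilon)$-separated sets, so two points of $E_j$ are guaranteed to be $8\varepsilon$-apart at just \emph{one} element of $K_n$. In Step~3 you then shrink $K_nc$ to the refined subtile $Jc$ given by Lemma~\ref{lem:quasi-tiling-refinement}, and claim the loss is absorbed into the separation estimate by uniform continuity. That does not work. Two points of $E_j$ may be separated only at elements of $K_n\setminus J$, while the specification point is only required to follow the chosen orbit on $Jc$. Distinct choices may then yield shadowing points that are nowhere $6\varepsilon$-apart on $F$; indeed one point may shadow both choices. So the claimed $(F,6\varepsilon)$-separation, and with it the bound $h(\Lambda,G)>h$, is unjustified. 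Uniform continuity controls how closeness propagates, but it cannot produce separation on $J$ from separation at a deleted element at a scale fixed in advance; the deleted region depends on the specification set, which itself depends on the scale.

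The fix is to require separation at a positive proportion of times. Proposition~\ref{pro:2.17} gives $(K_n,\delta,\gamma)$-separated sets with the empirical-measure property. Once $\delta$ is fixed, choose the refinement parameter so that the deleted fraction $3\alpha$ is below $\delta/2$. Every pair then stays separated at no fewer than $\frac{\delta}{2}|J|$ elements of $J$, exactly as in the Claim of Subsection~\ref{con:3.1}. Alternatively, rerun the covering argument directly on $J$, paying a factor $r^{|K_n\setminus J|}$, where $r$ bounds the number of $\varepsilon$-balls covering $X$; this is harmless once $3\alpha\log r$ is small. The empirical-measure part of the loss is fine, though it follows from the trivial bound $2|K_n\setminus J|/|K_n|$, not from uniform continuity.

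\textbf{Smaller points.} In Step~1, finite affinity of the entropy map is not enough to pick, in each small piece of $\mathcal M_e(X,G)$, an ergodic $\mu_j$ with large entropy. You need the ergodic-decomposition formula $h_\mu=\int h_\nu\,d\tau(\nu)$, which does hold for amenable actions.

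In Step~4, neither zero entropy nor congruence of the tilings plays any role. You only need a lower bound on $h(\Lambda,G)$, which tiling combinatorics can only increase. A single exact tiling with sufficiently invariant shapes suffices, as in Subsection~\ref{con:3.1}. With such shapes each subtile is a tiny fraction of a tile, so the weights $a_j$ can be matched by cardinality and need not be rational. In this paper, $h(\mathcal T_k)=0$ is used only for the upper bound $h(\Lambda,G,\gamma)<h_0+\beta_0$.
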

\subsection{Empirical measures}
In this subsection, we present several basic facts concerning empirical measures.
For $x\in X$ and a nonempty finite set $F\subset G$, define
\[
\mathcal E_F(x)
:=
\frac{1}{|F|}
\sum_{g\in F}\delta_{gx},
\]
where $\delta_{x}$ is the Dirac mass at $x$.
In particular, for a F{\o}lner sequence
$\{F_n\}_{n\ge1}$, we write
$\mathcal E_{F_n}(x)
=
\frac{1}{|F_n|}
\sum_{g\in F_n}\delta_{gx}$.
For a set $U\subset\mathcal M(X)$ and a nonempty finite set
$F\subset G$, define
\[
X_{F,U}
:=
\left\{
x\in X:\mathcal E_F(x)\in U
\right\}.\]
Let $\mu\in\mathcal{M}\left( X,G\right)$ and $\eta>0$. Denote
$$B(\mu, \eta)=\left\lbrace \nu\in\mathcal{M}\left( X,G\right):D(\mu,\nu)<\eta \right\rbrace.$$
\begin{definition}\cite[Definition 2.3]{PS05}
	Let $\mu\in \mathcal{M}(X)$. An $f$-neighborhood of $\mu$ is a set of the form
	$$\mathcal{F}^{(\alpha)}
	:=\mathcal{F}^{(\alpha)}(k)=
	\left\{
	\nu\in \mathcal{M}(X):
	\left|\langle f_i,\mu\rangle-\langle f_i,\nu\rangle\right|
	\le \alpha~\varepsilon_i,~~ i=1,\dots,k
	\right\},$$
	where $
	\alpha>0, \varepsilon_i>0, f_i\in C(X,\mathbb R), \|f_i\|_\infty\le 1$
	for each $i=1,\dots,k$. Here
	$
	\langle f_i,\nu\rangle
	:=
	\int_X f_i\,d\nu
	$
	and $\|f_i\|_\infty	:=\sup_{x\in X}|f_i(x)|.$
	
The weak$^*$ topology on $\mathcal{M}(X)$ admits the family of all $f$-neighborhoods as a neighborhood basis, and this topology is adopted throughout the paper.
\end{definition}
For a nonempty finite set $F\subset G$ and $\delta,\gamma>0$,
a set $E\subset X$ is called $(F,\delta,\gamma)$-separated if,
for every distinct $x,y\in E$,
$\left|\left\{g\in F:\rho(gx,gy)>\gamma\right\}\right|\geq \delta |F|$.
The following proposition, proved in
\cite[Proposition~3.7]{RTZ23}, will be used in our entropy
estimates for amenable group actions.
\begin{proposition}\label{pro:2.17}
	Let $(X,G)$ be a topological dynamical system, and let
	$\{K_n\}_{n=1}^{\infty}$ be a F{\o}lner sequence.
	Suppose that
	$\mu\in\mathcal{M}_e(X,G)
	~~\text{and}~~h<h_\mu(X,G)$.
	Then there exist $\delta>0$ and $\gamma>0$ such that, for every
	weak-$*$ neighborhood $C$ of $\mu$ in $\mathcal{M}(X)$, there exists
	$n_C^*\in\mathbb{N}$ such that, for every $n\geq n_C^*$, there is a
	finite set
	$I_n\subset X_{K_n,C}$
	which is $(K_n,\delta,\gamma)$-separated and satisfies
	$|I_n|\geq e^{|K_n|h}$.
\end{proposition}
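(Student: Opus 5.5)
The plan is a Katok/Pfister–Sullivan-type counting argument. Combine an ergodic theorem and a Shannon–McMillan–Breiman type estimate for $\mu$ with a Hamming-distance greedy selection. Throughout, $\delta$ and $\gamma$ are fixed from $\mu$ and $h$ alone, and the neighborhood $C$ only affects how large $n$ must be.

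\textbf{Choice of constants.} Fix $\tau>0$ with $h+4\tau<h_\mu(X,G)$, and pick a finite measurable partition $\alpha=\{A_1,\dots,A_r\}$ with $h_\mu(\alpha,G)>h+4\tau$. Next choose $\delta>0$ so small that the number of words in $\{1,\dots,r\}^{K_n}$ within normalized Hamming distance $2\delta$ of a given word is at most $e^{\tau|K_n|}$. This is possible by the estimate $\exp\big(|K_n|(H(2\delta)+2\delta\log r)\big)$. Then take $\zeta<\delta/4$, also small enough that choosing a subset of $K_n$ of size at most $2\zeta|K_n|$, together with arbitrary symbols on it, costs at most $e^{\tau|K_n|}$. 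By regularity choose compact $L_i\subset A_i$ with $\mu\big(\bigcup_i L_i\big)>1-\zeta^2$, and set
\[
\gamma:=\tfrac12\min_{i\neq j}\operatorname{dist}(L_i,L_j)>0 .
\]

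\textbf{Good set.} Fix a weak-$*$ neighborhood $C$. For large $n$, let $\Omega_n$ be the set of $x$ satisfying three conditions:
\begin{enumerate}
\item $\mathcal E_{K_n}(x)\in C$;
\item $|\{g\in K_n: gx\notin \bigcup_iL_i\}|\le \zeta|K_n|$;
\item the atom $\alpha^{K_n}(x)$ has $\mu$-measure at most $e^{-|K_n|(h+3\tau)}$.
\end{enumerate}
Conditions (1) and (2) hold with probability tending to $1$ by the mean ergodic theorem for Følner sequences. Condition (2) also uses Markov's inequality applied to the visit frequency of the complement, whose mean is $<\zeta^2$. For (3) I would invoke the Ornstein–Weiss Shannon–McMillan theorem, which gives convergence in measure of $-\frac{1}{|K_n|}\log\mu(\alpha^{K_n}(x))$ to $h_\mu(\alpha,G)$ by ergodicity. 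Alternatively, one can use the lower bound $H_\mu(\alpha^{K_n})\ge|K_n|h_\mu(\alpha,G)$ together with a standard truncation argument to get a set of measure $\ge 1/2$ where (3) holds along the given sequence.

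This step is the main obstacle. It is the only place where the generality of the Følner sequence matters, and I would first try to cite the in-measure SMB theorem of Ornstein–Weiss, which is valid for the given $\{K_n\}$. This yields $\mu(\Omega_n)\ge 1/2$ for all $n\ge n_C^*$. Consequently $\Omega_n$ meets at least $\tfrac12 e^{|K_n|(h+3\tau)}$ distinct $\alpha^{K_n}$-names.

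\textbf{Selection and separation.} Select $I_n\subset\Omega_n$ greedily. Repeatedly pick a point whose name is not yet excluded, then exclude every name that agrees with it in $\delta|K_n|$-Hamming distance after modification on at most $2\zeta|K_n|$ positions. Each step excludes at most $e^{2\tau|K_n|}$ names, so $|I_n|\ge \tfrac12 e^{|K_n|(h+\tau)}\ge e^{|K_n|h}$ for $n$ large. Now take distinct $x,y\in I_n$. Their names differ on at least $\delta|K_n|$ positions $g\in K_n$. Discard the at most $2\zeta|K_n|<\delta|K_n|/2$ positions where $gx$ or $gy$ lies outside $\bigcup_iL_i$. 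At each of the remaining $\ge\frac{\delta}{2}|K_n|$ positions we have $gx\in L_i$ and $gy\in L_j$ with $i\ne j$, hence $\rho(gx,gy)\ge 2\gamma>\gamma$. Thus $I_n$ is $(K_n,\delta/2,\gamma)$-separated. Renaming $\delta/2$ as $\delta$ gives the proposition, and $I_n\subset X_{K_n,C}$ by condition (1).
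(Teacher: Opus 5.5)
Your proof is correct. The paper gives no argument of its own here: it imports the proposition from \cite[Proposition~3.7]{RTZ23}, the amenable-group version of the Pfister--Sullivan lemma. What you have written is a self-contained form of the Katok/Pfister--Sullivan counting argument that such statements rest on. Its ingredients are a partition $\alpha$ with $h_\mu(\alpha,G)>h+4\tau$, compact pieces $L_i\subset A_i$ that turn differing symbols into distance $\ge 2\gamma$, a lower bound on the number of $\alpha^{K_n}$-names met by a good set, and a greedy Hamming-ball selection. You correctly arrange that $\delta,\gamma$ depend only on $(\mu,h)$ and that $C$ enters only through the mean ergodic theorem. Both that theorem and the in-measure Shannon--McMillan theorem hold along arbitrary (left) F{\o}lner sequences, matching the conventions $\alpha^F=\bigvee_{g\in F}g^{-1}\alpha$ and $\mathcal E_F(x)=|F|^{-1}\sum_{g\in F}\delta_{gx}$ used here, so temperedness is not needed. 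The citation buys the paper brevity; your version makes explicit which ingredients are actually used.

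Two corrections. First, the ``modification on at most $2\zeta|K_n|$ positions'' in the greedy step is unnecessary. It suffices that the selected names be pairwise at Hamming distance $>\delta|K_n|$, because the at most $2\zeta|K_n|<\frac{\delta}{2}|K_n|$ positions where $gx$ or $gy$ leaves $\bigcup_iL_i$ are discarded afterwards anyway.

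Second, and more substantively, your fallback to the Shannon--McMillan theorem does not give measure $\ge\frac12$. For large $n$ you have $H_\mu(\alpha^{K_n})\ge|K_n|(h+\frac{7}{2}\tau)$, directly from the definition of $h_\mu(\alpha,G)$ (no infimum rule is needed). Atoms of measure $<e^{-|K_n|(\log r+1)}$ contribute at most $(\log r+1)|K_n|e^{-|K_n|}$ to $H_\mu(\alpha^{K_n})$. Truncation then yields only $\mu\{x:\mu(\alpha^{K_n}(x))\le e^{-|K_n|(h+3\tau)}\}\ge c:=\tau/(4(\log r+1))$ for large $n$. This is still enough: take $\zeta<c/4$, which is legitimate since $c$ depends only on $\tau$ and $r$, and count at least $\frac{c}{2}e^{|K_n|(h+3\tau)}$ names. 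With this repair the proof needs no Shannon--McMillan theorem at all, only the definition of $h_\mu(\alpha,G)$, Markov's inequality and the mean ergodic theorem.
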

\section{Almost entropy-approximability}
In this section, we prove Theorem~\ref{thm:1.3}.
One could prove the theorem directly for an arbitrary invariant
measure by approximating it with finite convex combinations of
ergodic measures. Instead, we use Proposition~\ref{pro:2.15}
to simplify the exposition. It therefore suffices to establish
the desired approximation property for ergodic measures.

The proof of Proposition~\ref{pro:3.1} adapts the construction
used in the proof of \cite[Proposition~7.2]{RTZ23}. In addition, we introduce a coding argument to establish the
entropy upper bound in part~\textup{(2)}.
\begin{proposition}\label{pro:3.1}
Let $G$ be an infinite countable amenable group, and let
$\{K_m\}_{m=1}^\infty$ denote the fixed tempered F{\o}lner
sequence chosen above. Let \((X, G)\) be a dynamical system with the specification property. Suppose \(\mu_0 \in \mathcal{M}_e(X, G)\), \(h_0 \in (0, h_{\mu_0}(X,G))\), and \(\eta_0, \beta_0, \varepsilon_0 > 0\). Then there exist \(\gamma \in (0, \varepsilon_0)\) and a compact \(G\)-invariant subset \(\Lambda = \Lambda(\mu_0, h_0, \eta_0, \beta_0, \gamma)\) such that:
\begin{enumerate}
	\item\label{pro 3.1(1)} \(\mathcal{M}(\Lambda, G)\subset B(\mu_0,\eta_0)\);
	\item \label{pro 3.1(2)}\(h(\Lambda, G) > h_0\) and \(h(\Lambda, G, \gamma) < h_0 + \beta_0\).
\end{enumerate}
\end{proposition}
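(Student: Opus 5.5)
We will build $\Lambda$ as the orbit closure of points that shadow, through the specification property, concatenations of "good" orbit segments chosen from a separated set produced by Proposition~\ref{pro:2.17}. The choice of segments is organized by a zero-entropy tiling from Lemma~\ref{lem:2.6}.

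\textbf{Choice of parameters.} Fix $h_1$ with $h_0<h_1<\min\{h_{\mu_0}(X,G),\,h_0+\beta_0/4\}$. Apply Proposition~\ref{pro:2.17} to $\mu_0$ and $h_1$, which gives $\delta,\gamma'>0$. Take $\gamma<\min\{\varepsilon_0,\gamma'/4\}$ small, and let $K$ be the specification set for scale $\gamma$. Choose a weak-$*$ neighborhood $C$ of $\mu_0$ with $C\subset B(\mu_0,\eta_0/4)$. Then choose a tiling $\mathcal T$, one tile-level of a congruent zero-entropy sequence from Lemma~\ref{lem:2.6}, whose shapes $S_1,\dots,S_\ell$ are highly $(K,\cdot)$-invariant. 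The invariance should be strong enough that Lemma~\ref{lem:quasi-tiling-refinement} (with $F=K\cup K^{-1}\cup\{e\}$) shrinks each shape $S_j$ to a core $\widetilde S_j$ with $K\widetilde S_j\subset S_j$ and $|\widetilde S_j|>(1-4\gamma)|S_j|$. Inside each $S_j$ we quasi-tile by large $K_n$'s (Proposition~\ref{pro:quasi-tiling}) with $n\ge n^*_C$. For each sub-tile $K_nc$ we take the separated set $I_n$, with $|I_n|\ge e^{|K_n|h_1}$. We then thin it: we keep only a subset of cardinality about $e^{|K_n|h_2}$ with $h_0<h_2<h_1$, so that the count per tile is exactly tuned. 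This thinning is what gives the upper bound.

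\textbf{Construction.} For each tile $S_jc\in\mathcal T$, choose for every sub-tile a point of the thinned set. The pieces are $K$-separated because of the cores, and since specification applies only to finitely many pieces we use compactness over finite windows. This yields points $y\in X$ that $\gamma$-shadow all chosen pieces. Let $Y$ be the closure of all such $y$ over all choices, and set $\Lambda=\overline{\bigcup_{g}gY}$.

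For (1), any $z\in\Lambda$ has, for a large F{\o}lner set $K_m$, empirical measure $\mathcal E_{K_m}(z)$ that is mostly an average of empirical measures of shadowed pieces. These lie in $C$ up to $\gamma$-continuity of the $f_i$. The uncovered fraction, which comes from cores, quasi-tiling defect and tile boundaries, is small; by Lemma~\ref{lem:2.7} and Proposition~\ref{Pro:2.4} this gives $D<\eta_0$ for every invariant measure on $\Lambda$ via generic points. The lower bound $h(\Lambda,G)>h_0$ follows because distinct choices are $(\cdot,\delta,\gamma')$-separated on a positive fraction. Since $\gamma<\gamma'/4$, shadowing points are $(K_m,\gamma'/2)$-separated, giving at least $e^{|K_m|(1-O(\gamma))h_2}>e^{|K_m|h_0}$ points.

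\textbf{Main obstacle: the upper bound $h(\Lambda,G,\gamma)<h_0+\beta_0$.} We will code: any $(K_m,\gamma)$-separated set in $\Lambda$ injects into the data consisting of (i) the tiling pattern of $\mathcal T$ restricted to $K_m$, together with the sub-quasi-tiling inside each tile, and (ii) the choice of point in each sub-tile. We take the scale to be $\gamma$ rather than finer, and the cost is that we must bound the number of separated points sharing the same code. Two points with the same code are within $2\gamma$ on the covered part, and on the small uncovered part they are unconstrained. To handle this we actually compute at scale $\gamma$ while shadowing at scale $\gamma/4$; then same-code points are $\gamma/2$-close on the covered part, so separation must occur on the uncovered fraction $\le 5\gamma$-ish. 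This costs at most $e^{|K_m|\cdot 5\gamma\log N(\gamma/2)}$, with $N$ a covering number, and that can be made $<\beta_0/4$ provided the uncovered fraction is chosen after $\gamma$ (possible since the invariance parameters are chosen after $\gamma$). The tiling pattern contributes $h(\mathcal T)=0$ growth; this is the essential use of the zero-entropy tiling, via a sliding block code. The inner quasi-tilings are fixed per shape, so they contribute nothing. The choices contribute $\le e^{|K_m|h_2}$. Altogether $h(\Lambda,G,\gamma)\le h_2+\beta_0/4+o(1)<h_0+\beta_0$.
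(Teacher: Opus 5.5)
Your proposal is correct and follows essentially the same route as the paper. Both use a zero-entropy tiling whose shapes are quasi-tiled by the $K_n$'s and refined via Lemma~\ref{lem:quasi-tiling-refinement}, with thinned separated sets from Proposition~\ref{pro:2.17} as labels, shadowing by specification plus compactness, and an upper bound by coding with a zero-entropy phase code, the label choices, and a spanning-set code on the bad part whose density is chosen after the scale (the paper codes $GZ^\#$ at scale $\alpha$ and passes to the closure at scale $3\alpha$, which is exactly the role of your slack between $\gamma/4$ and $\gamma$). Just keep the refinement/uncovered-fraction parameter notationally separate from the scale $\gamma$: as you yourself note, it must be chosen after $\gamma$, so your ``$1-4\gamma$'' and ``$1-O(\gamma)$'' should refer to that independent parameter.
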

When $h_{\mu_0}(X,G)=0$, we have the following:
\begin{proposition}\label{pro:3.1-zero}
	Let $G$ be an infinite countable amenable group, and let
	$\{K_m\}_{m=1}^\infty$ denote the fixed tempered F{\o}lner
	sequence chosen above. 
	Let $(X,G)$ be a dynamical system with the specification property.
	Suppose that
	$\mu_0\in\mathcal M_e(X,G),~~h_{\mu_0}(X,G)=0,
	$
	and let $\eta_0>0$. Then there exists a nonempty compact $G$-invariant
	subset $\Lambda\subset X$ such that
	$\mathcal M(\Lambda,G)\subset B(\mu_0,\eta_0)$.
\end{proposition}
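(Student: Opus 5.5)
The plan is to build $\Lambda$ by concatenating long orbit segments that are generic for $\mu_0$, using the specification property, and then to control every invariant measure on $\Lambda$ through the empirical measures along tiles. Since no entropy estimate is required, we only need a single generic orbit segment per tile, glued together along a zero-entropy tiling; even a single point suffices.

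First fix the data. Choose $\varepsilon>0$ so small that any two measures $\nu,\nu'$ with $\rho(gx,gy)\le\varepsilon$ along a set $F$ give $D(\mathcal E_F(x),\mathcal E_F(y))<\eta_0/8$; this is possible by uniform continuity of finitely many $f_i$, with the tail of $D$ controlled. Let $K$ be the specification set for $\varepsilon$, and enlarge it so that $K=K^{-1}\ni e$. Fix a point $x_0$ generic for $\mu_0$ along the tempered F{\o}lner sequence; this exists by the pointwise ergodic theorem of Lindenstrauss. By Lemma~\ref{lem:2.6} (a single level suffices), take a tiling $\mathcal T$ of $G$ whose finitely many shapes $S$ are so invariant that two conditions hold. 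First, $D(\mathcal E_{S'}(x_S),\mu_0)<\eta_0/8$ for suitable points $x_S$. Here, instead of demanding genericity along arbitrary shapes, I would use Proposition~\ref{pro:2.17}'s underlying fact that $X_{F,C}\neq\emptyset$ for sufficiently invariant $F$; this is the mean ergodic theorem applied to ergodic $\mu_0$, which gives $\mu_0(X_{F,C})\to1$ along any F{\o}lner sequence of sufficiently invariant sets. Second, after shrinking each tile $Sc$ to $S'c$, where $S'=\{s\in S: Ks\subset S\}$ has $|S'|>(1-\eta_0/8)|S|$, the shrunk tiles satisfy $KS'c\cap S''c'=\varnothing$ for distinct tiles.

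Next comes the gluing. For each finite collection of tiles, specification gives a point $y$ with $\rho(gc^{-1}\cdots)$ small; precisely, we shadow $c^{-1}$-translates so that $\rho(sc\,y, s\,x_S)\le\varepsilon$ for $s\in S'$. A compactness argument, taking limits over exhausting finite families of tiles, produces $y\in X$ satisfying this for every tile of $\mathcal T$. Then let $\Lambda$ be the orbit closure of $y$. It is nonempty, compact and $G$-invariant.

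Finally we verify $\mathcal M(\Lambda,G)\subset B(\mu_0,\eta_0)$. Any $\nu\in\mathcal M(\Lambda,G)$ is a limit of convex combinations of empirical measures $\mathcal E_{K_n}(z)$ with $z\in\Lambda$; for ergodic $\nu$ this is direct, and the general case follows by convexity of the ball. For $z=gy$, the set $K_ng$ is almost covered by whole tiles by Lemma~\ref{lem:2.7}, provided $K_n$ is invariant enough relative to the shapes. On each shrunk tile the empirical measure is within $\eta_0/4$ of $\mu_0$, and the boundary contributes at most $D^*$ times a small fraction. Proposition~\ref{Pro:2.4} then yields $D(\mathcal E_{K_n}(z),\mu_0)<\eta_0/2$ for large $n$, uniformly in $z$ in the orbit, and hence by density and continuity for all $z\in\Lambda$. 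Therefore $D(\nu,\mu_0)\le\eta_0/2<\eta_0$.

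The main obstacle is the uniformity in this last step, namely passing the tile-wise estimate to limit points of the orbit. The plan is to handle it by noting that the estimate is a closed condition on finite windows, so that it transfers to the orbit closure.
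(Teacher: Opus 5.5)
Your argument is correct, but its construction step takes a different and lighter route than the paper's. The paper (see the parenthetical remark after \eqref{3.4}) reuses the machinery of Proposition~\ref{pro:3.1}. It fixes a single point $x^*$ generic for $\mu_0$ along $\{K_m\}$, so it only has good empirical averages on the F{\o}lner sets $K_n$. To transfer them to the tiles of the zero-entropy tiling, it $\eta_k$-quasi-tiles every shape by $K_{n_{k,1}},\dots,K_{n_{k,t_k}}$ (Proposition~\ref{pro:quasi-tiling}), refines these into $F_\Delta$-separated subtiles $J_{k,S,i,c}c$ (Lemma~\ref{lem:quasi-tiling-refinement}), and traces $x^*$ on every subtile.

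You bypass all of this in two ways. First, you choose a separate point $x_S$ for each of the finitely many shapes, using the mean ergodic theorem uniformly over all sufficiently invariant finite sets. This is legitimate: the averages $\frac{1}{|F|}\sum_{g\in F}f(gx)$ of coboundaries are controlled by $|hF\triangle F|/|F|$, and $|hF\triangle F|\le|\partial_{\{e,h^{-1}\}}F|$, so the invariance furnished by Lemma~\ref{lem:2.6} is of the right kind. Second, you get the specification gaps simply by trimming each tile to $S'=\{s\in S:Ks\subset S\}$. Zero entropy of the tiling plays no role in your argument.

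Your verification step is essentially the paper's. The ``closed condition on finite windows'' is exactly the set $Y_{m,k}$, and Lemma~\ref{lem:2.7}, together with the fact that right translation preserves $(L,\delta)$-invariance, gives the uniformity in $g$. For the last step you can drop Lindenstrauss's theorem and the convexity argument; the latter needs the \emph{closed} ball $\{D(\cdot,\mu_0)\le\eta_0/2\}$ and Krein--Milman. Instead use the paper's identity $\langle f,\nu\rangle=\int\langle f,\mathcal E_{K_m}(y)\rangle\,d\nu(y)$ for $G$-invariant $\nu$, which gives $D(\nu,\mu_0)\le\sup_{y\in\Lambda}D(\mathcal E_{K_m}(y),\mu_0)$ at once.

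What your route buys is a short, self-contained proof of the zero-entropy statement. What the paper's route buys is a single construction shared with Proposition~\ref{pro:3.1}. There the separated families of Proposition~\ref{pro:2.17} live on the sets $K_n$, and the subtile structure and zero-entropy tiling are genuinely needed for the coding upper bound.

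Two minor slips: the generic point $x_0$ you fix at the start is never used, and in the choice of $\varepsilon$ you mean two points $x,y$, not two measures.
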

We shall prove Proposition~\ref{pro:3.1} in the following subsections.
The proof of Proposition~\ref{pro:3.1-zero} is analogous to that of
Proposition~\ref{pro:3.1} and is therefore omitted (the modification
required for the zero-entropy case is described after \eqref{3.4}, and
no entropy estimate is needed).
\subsection{Construction}\label{con:3.1}
In this subsection, we construct the auxiliary set $Z^\#$. 

Choose an $f$-neighborhood $\mathcal F^{(1)}$ of $\mu_0$ such that
$\mathcal F^{(1)}\cap \mathcal M(X,G)\subset B(\mu_0,\eta_0)$.
Fix $\{f_j,\varepsilon_j:1\le j\le p\}$ for $\mathcal F^{(1)}$ and denote
$\varepsilon_{\min}:=\min\{\varepsilon_j:1\le j\le p\}$.

Let
$$
0<\beta<
\frac{1}{20}
\min\{\beta_0,\ h_{\mu_0}(X,G)-h_0,\ h_0\},
\qquad
h_1:=h_0+10\beta.
$$
Then
\begin{align}\label{3.1}
h_1+\beta<h_{\mu_0}(X,G), \qquad h_1+3\beta=h_0+13\beta<h_0+\beta_0.
\end{align}
Choose $h^*$ such that
$h_1+\beta<h^*<h_{\mu_0}(X,G)$ and choose $N_0\in\mathbb N$ such that
\[
e^{(h_1+\beta)N}>e^{h_1N}+1
\qquad
\text{for every }N\ge N_0.
\]
By Proposition~\ref{pro:2.17}, applied to $h^*$ for $\mu_0$
and the neighborhood $\mathcal F^{(1/20)}$, there exist constants
$\delta^*>0,~~\varepsilon^*>0$ and $n^*_{\mathcal F^{(1/20)}}\in\mathbb N$ such that, for every
$n\ge n^*_{\mathcal F^{(1/20)}}$, there exists a finite set
$I_n\subset X_{K_n,\mathcal F^{(1/20)}}$
which is $(K_n,\delta^*,\varepsilon^*)$-separated and satisfies
$|I_n|\ge e^{h^*|K_n|}$.
Set $n^*:=n^*_{\mathcal F^{(1/20)}}$. 

Let $0<\alpha<\frac{\varepsilon_0}{3}$. Choose $\Delta>0$ such that
\[
2\Delta<\alpha,\qquad
3\Delta<\varepsilon^*,
\]
and
\[
\rho(x,y)\le\Delta
\quad\Longrightarrow\quad
|f_j(x)-f_j(y)|<\frac{\varepsilon_j}{5},
\qquad j=1,\dots,p.
\]
Let $F_\Delta\subset G$ be the finite set given by the
specification property corresponding to the tracing accuracy $\Delta$.
After replacing $F_\Delta$ by
$F_\Delta\cup F_\Delta^{-1}\cup\{e\}$, we may assume that
$e\in F_\Delta
~~\text{and}~~
F_\Delta=F_\Delta^{-1}$.

Let $Q=\{q_1,\dots,q_{r_Q}\}\subset X$ be a finite
$\frac{\alpha}{4}$-spanning set with $r_Q\ge2$.
Let $\{\alpha_k\}_{k\ge1}$ be a sequence of positive numbers strictly decreasing to $0$ such that
\begin{align}\label{3.2}
\alpha_1
<
\min\left\{
\frac{10\beta}{3h_1},\
\frac{\delta^*}{6},\
\frac{1}{12},\
\frac{\varepsilon_{\min}}{50},\
\frac{\beta}{8\log r_Q}
\right\}.
\end{align}
For each $k\ge1$, set
$\eta_k:=\frac{\alpha_k}{|F_\Delta|}$.
Since $F_\Delta\neq\emptyset$ and
$\alpha_k\le\alpha_1<\frac{1}{12}$, we have
$0<\eta_k<\frac12$.
Let
$t_k:=s_0(\eta_k)$,
where $s_0(\eta_k)$ is provided by
Proposition~\ref{pro:quasi-tiling}.

Using the F{\o}lner property and the fact that
$|K_n|\to\infty$, we recursively choose integers
\[
n_k^*<n_{k,1}<n_{k,2}<\cdots<n_{k,t_k}<n_{k+1}^*
\qquad (k\ge1)
\]
with the following properties:
$n_k^*\ge n^*$,
and, for every $n\ge n_k^*$,
$K_n
\text{ is }
\left(F_\Delta,\eta_k\right)\text{-invariant}$
and
$(1-3\alpha_k)|K_n|\ge N_0$.
Moreover,
\[
K_{n_{k,j}}
\text{ is }
\left(
K_{n_{k,i}},\eta_k^{2t_k}
\right)\text{-invariant}
\qquad
\text{for all }1\le i<j\le t_k.
\]
Set
\[
L_k:=\bigcup_{j=1}^{t_k}K_{n_{k,j}}
\qquad\text{and}\qquad
\xi_k:=\eta_k^{2t_k}.
\]
Since $t_k\ge1$ and $\eta_k\to0$, we have
$0<\xi_k\le\eta_k^2\rightarrow 0$.
Applying Lemma~\ref{lem:2.6} to the sequences
$\{L_k\}_{k\ge1}$ and $\{\xi_k\}_{k\ge1}$, we obtain a
congruent sequence of tilings
$\{\mathcal T_k\}_{k\ge1}$ of $G$ such that
$h(\mathcal T_k)=0~~\text{for every }k\ge1$.
Denote by
$
\mathcal S_k:=\mathcal S(\mathcal T_k)
$
the finite shape family of $\mathcal T_k$. Then every
$S\in\mathcal S_k$ is $(L_k,\xi_k)$-invariant.

For every $j=1,\dots,t_k$, since
$K_{n_{k,j}}\subset L_k$,
we have
$\partial_{K_{n_{k,j}}}S
\subseteq
\partial_{L_k}S$.
Hence every $S\in\mathcal S_k$ is
$\left(
K_{n_{k,j}},\eta_k^{2t_k}
\right)\text{-invariant}
~~
\text{for every }j=1,\dots,t_k.
$
Together with the choice of
$K_{n_{k,1}},\dots,K_{n_{k,t_k}}$, all the assumptions of
Proposition~\ref{pro:quasi-tiling} are satisfied. Therefore,
every $S\in\mathcal S_k$ can be $\eta_k$-quasi-tiled by
$K_{n_{k,1}},K_{n_{k,2}},\dots,K_{n_{k,t_k}}$.
Take $\theta>0$ such that
\begin{align}\label{3.3}
\theta<
\min\left\{
\frac{\varepsilon_{\min}}{20},\
\frac{\beta}{2\log r_Q},\
1-\frac{h_0}{h_1}
\right\}.
\end{align}
Choose $k^\#$ sufficiently large such that, for every
$k\ge k^\#$,
\[
8\alpha_k<\frac{\varepsilon_{\min}}{5},\qquad
(\theta+4\alpha_k)\log r_Q<\beta,\qquad
h_1(1-\theta)(1-4\alpha_k)>h_0.
\]

From now on, fix $k\ge k^\#$. For every
$S\in\mathcal S_k$, fix an $\eta_k$-quasi-tiling of $S$ by
$K_{n_{k,1}},K_{n_{k,2}},\dots,K_{n_{k,t_k}}$,
and denote the corresponding center sets by
$C_{k,S,1},C_{k,S,2},\dots,C_{k,S,t_k}$.

Since
$\eta_k=\frac{\alpha_k}{|F_\Delta|}$
and, for every $i=1,\dots,t_k$,
$K_{n_{k,i}}
\text{ is }
\left(
F_\Delta,
\frac{\alpha_k}{|F_\Delta|}
\right)\text{-invariant}$,
Lemma~\ref{lem:quasi-tiling-refinement} applies. Hence, for every
$i=1,\dots,t_k$ and every $c\in C_{k,S,i}$, there exists a subset
$J_{k,S,i,c}\subset K_{n_{k,i}}$.

Set
\[
\mathcal P_{k,S}
:=
\left\{
J_{k,S,i,c}c:
c\in C_{k,S,i},\ i=1,\dots,t_k
\right\}
\quad\text{and}
\quad
\widetilde S_{k,S}
:=
\bigcup_{P\in\mathcal P_{k,S}}P.
\]
Then the following properties hold:
\begin{enumerate}
    \item If $P,P'\in\mathcal P_{k,S}$ and $P\neq P'$, then
    $F_\Delta P\cap P'=\varnothing$.

    \item The elements of $\mathcal P_{k,S}$ are pairwise
    disjoint, and
    $
    |J_{k,S,i,c}|
    >
    (1-3\alpha_k)|K_{n_{k,i}}|
    $
    for every $i=1,\dots,t_k$ and every
    $c\in C_{k,S,i}$.

    \item The refined tiles satisfy
    $
    \widetilde S_{k,S}\subset S,
    ~~
    |\widetilde S_{k,S}|
    >
    (1-4\alpha_k)|S|
    ~~\text{and}~~
    F_\Delta\widetilde S_{k,S}\subset S.$
\end{enumerate}
For each $i=1,\dots,t_k$, since
$n_{k,i}>n_k^*\ge n^*$,
choose once and for all, by
Proposition~\ref{pro:2.17}, a finite set
$I_{k,i}\subset
X_{K_{n_{k,i}},\mathcal F^{(1/20)}}$
which is
$\left(
K_{n_{k,i}},
\delta^*,
\varepsilon^*
\right)\text{-separated}$
and satisfies
$|I_{k,i}|
\ge
e^{h^*|K_{n_{k,i}}|}$.
\begin{claim}
For every $S\in\mathcal S_k$, every $i=1,\dots,t_k$, and
every $c\in C_{k,S,i}$, the set $I_{k,i}$ is contained in
$X_{J_{k,S,i,c},\mathcal F^{(1/5)}}$,
is
$\left(
J_{k,S,i,c},
\frac{\delta^*}{2},
\varepsilon^*
\right)\text{-separated},$
and satisfies
$|I_{k,i}|
\ge
e^{h^*|J_{k,S,i,c}|}$.
\end{claim}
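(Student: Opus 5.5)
The proof compares everything on $J:=J_{k,S,i,c}$ with the corresponding quantities on $K:=K_{n_{k,i}}$, using only two facts from the construction: $J\subset K$, and $|K\setminus J|<3\alpha_k|K|$ (property (2) of the refined quasi-tiling). Note that the claim concerns $J$ itself, not the translate $Jc$. So neither the center $c$ nor the shape $S$ plays any role, and the three assertions are pure counting estimates. I would check them in turn.

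\emph{Empirical measures.} Take $x\in I_{k,i}$. Then $\mathcal E_K(x)\in\mathcal F^{(1/20)}$, that is, $|\langle f_j,\mathcal E_K(x)\rangle-\langle f_j,\mu_0\rangle|\le\varepsilon_j/20$ for $j=1,\dots,p$. Split the average over $K$ as
\[
\mathcal E_K(x)=\tfrac{|J|}{|K|}\mathcal E_J(x)+\tfrac{|K\setminus J|}{|K|}\mathcal E_{K\setminus J}(x).
\]
This gives $\mathcal E_J(x)-\mathcal E_K(x)=\tfrac{|K\setminus J|}{|K|}\bigl(\mathcal E_J(x)-\mathcal E_{K\setminus J}(x)\bigr)$. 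If $K\setminus J=\varnothing$ the difference is $0$. Otherwise, since $\|f_j\|_\infty\le1$,
\[
|\langle f_j,\mathcal E_J(x)\rangle-\langle f_j,\mathcal E_K(x)\rangle|\le 2\cdot 3\alpha_k=6\alpha_k .
\]
By \eqref{3.2}, $\alpha_k\le\alpha_1<\varepsilon_{\min}/50$, so $6\alpha_k<\tfrac{6}{50}\varepsilon_j<\tfrac{3}{20}\varepsilon_j$. The triangle inequality then gives $|\langle f_j,\mathcal E_J(x)\rangle-\langle f_j,\mu_0\rangle|\le(\tfrac1{20}+\tfrac3{20})\varepsilon_j=\tfrac15\varepsilon_j$. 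Hence $I_{k,i}\subset X_{J,\mathcal F^{(1/5)}}$.

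\emph{Separation.} Take distinct $x,y\in I_{k,i}$. By $(K,\delta^*,\varepsilon^*)$-separation, the set $\{g\in K:\rho(gx,gy)>\varepsilon^*\}$ has at least $\delta^*|K|$ elements. Removing $K\setminus J$ discards fewer than $3\alpha_k|K|$ of them, so at least $(\delta^*-3\alpha_k)|K|$ remain in $J$. Since $\alpha_k<\delta^*/6$ by \eqref{3.2}, this is at least $\tfrac{\delta^*}{2}|K|\ge\tfrac{\delta^*}{2}|J|$. So $I_{k,i}$ is $(J,\tfrac{\delta^*}{2},\varepsilon^*)$-separated.

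\emph{Cardinality.} Since $h^*>0$ and $|J|\le|K|$, we get $|I_{k,i}|\ge e^{h^*|K|}\ge e^{h^*|J|}$.

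There is no genuine obstacle. The only step needing care is the empirical-measure comparison: one must keep track of the constants so that the loss $6\alpha_k$ fits within the gap between $\tfrac1{20}$ and $\tfrac15$. This is exactly what the condition $\alpha_1<\varepsilon_{\min}/50$ in \eqref{3.2} was chosen to guarantee.
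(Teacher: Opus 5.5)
Your proposal is correct and follows the paper's proof essentially step for step. You use the same decomposition of $\mathcal E_K(x)$ into the $J$ and $K\setminus J$ parts to get the $6\alpha_k$ loss, the same constant check via $\alpha_1<\varepsilon_{\min}/50$, the same count $(\delta^*-3\alpha_k)|K|>\frac{\delta^*}{2}|K|\ge\frac{\delta^*}{2}|J|$ for separation, and the same monotonicity $e^{h^*|K|}\ge e^{h^*|J|}$ for the cardinality bound.
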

\begin{proof}
Put
\[
K:=K_{n_{k,i}},\qquad
J:=J_{k,S,i,c},\qquad
R:=K\setminus J.
\]
Then
$\frac{|R|}{|K|}<3\alpha_k$.
For every $x\in I_{k,i}$ and every $j=1,\dots,p$, if
$R\neq\emptyset$, then
\[
\mathcal E_K(x)
=
\frac{|J|}{|K|}\mathcal E_J(x)
+
\frac{|R|}{|K|}\mathcal E_R(x).
\]
Since $\|f_j\|_\infty\le1$, it follows that
\[
\begin{aligned}
\left|
\left\langle f_j,\mathcal E_J(x)\right\rangle
-
\left\langle f_j,\mathcal E_K(x)\right\rangle
\right|
&=
\frac{|R|}{|K|}
\left|
\left\langle f_j,\mathcal E_J(x)\right\rangle
-
\left\langle f_j,\mathcal E_R(x)\right\rangle
\right|  \\
&\le
2\frac{|R|}{|K|}
<
6\alpha_k.
\end{aligned}
\]
When $R=\emptyset$, the desired estimate is trivial since $J=K$. Hence, by the choice of $\alpha_1$ in \eqref{3.2} and the definition
of $I_{k,i}$, we obtain
\[
\begin{aligned}
\left|
\left\langle f_j,\mathcal E_J(x)\right\rangle
-
\left\langle f_j,\mu_0\right\rangle
\right|
&\le
\left|
\left\langle f_j,\mathcal E_J(x)\right\rangle
-
\left\langle f_j,\mathcal E_K(x)\right\rangle
\right|  \\
&\quad+
\left|
\left\langle f_j,\mathcal E_K(x)\right\rangle
-
\left\langle f_j,\mu_0\right\rangle
\right| \\
&<
6\alpha_k+\frac{\varepsilon_j}{20} \\
&<
\left(\frac{6}{50}+\frac{1}{20}\right)\varepsilon_j
<
\frac{\varepsilon_j}{5}.
\end{aligned}
\]
Therefore
$I_{k,i}\subset
X_{J_{k,S,i,c},\mathcal F^{(1/5)}}$.

Next, let $x,y\in I_{k,i}$ be distinct. Since $I_{k,i}$ is
$(K,\delta^*,\varepsilon^*)$-separated and 
$\alpha_k<\frac{\delta^*}{6}$ by \eqref{3.2}, we have
\[
\begin{aligned}
\left|
\left\{
g\in J:\rho(gx,gy)>\varepsilon^*
\right\}
\right|
&\ge
\delta^*|K|-|K\setminus J| \\
&>
(\delta^*-3\alpha_k)|K| \\
&>
\frac{\delta^*}{2}|K| \ge
\frac{\delta^*}{2}|J|.
\end{aligned}
\]
Thus $I_{k,i}$ is
$\left(
J_{k,S,i,c},
\frac{\delta^*}{2},
\varepsilon^*
\right)\text{-separated}$ and
$|I_{k,i}|\ge e^{h^*|K|}\ge e^{h^*|J|}$.
\end{proof}
Therefore, setting
$\Gamma_{k,S,i,c}^*:=I_{k,i}$,
we obtain a
$\left(
J_{k,S,i,c},
\frac{\delta^*}{2},
\varepsilon^*
\right)\text{-separated}$
set satisfying
$\Gamma_{k,S,i,c}^*
\subset
X_{J_{k,S,i,c},\mathcal F^{(1/5)}}$
and
$
|\Gamma_{k,S,i,c}^*|
\ge
e^{h^*|J_{k,S,i,c}|}$.

By property~\textup{(2)}~above and the choice of $n_k^*$, we have
\[
|J_{k,S,i,c}|
>
(1-3\alpha_k)|K_{n_{k,i}}|
\ge N_0.
\]
Hence, by the choice of $N_0$,
\[
\left\lceil e^{h_1|J_{k,S,i,c}|}\right\rceil
\le
e^{h_1|J_{k,S,i,c}|}+1
<
e^{(h_1+\beta)|J_{k,S,i,c}|}.
\]
Moreover, since $h^*>h_1+\beta$ and
$|\Gamma_{k,S,i,c}^*|
\ge e^{h^*|J_{k,S,i,c}|}$,
we obtain
\[
e^{h_1|J_{k,S,i,c}|}
\le
\left\lceil e^{h_1|J_{k,S,i,c}|}\right\rceil
<
e^{(h_1+\beta)|J_{k,S,i,c}|}
<
|\Gamma_{k,S,i,c}^*|.
\]
Thus, we may choose
$
\Gamma_{k,S,i,c}\subset\Gamma_{k,S,i,c}^*
$
with
$
|\Gamma_{k,S,i,c}|
=
\left\lceil e^{h_1|J_{k,S,i,c}|}\right\rceil,
$
and hence
\begin{align}\label{3.4}
e^{h_1|J_{k,S,i,c}|}
\le
|\Gamma_{k,S,i,c}|
<
e^{(h_1+\beta)|J_{k,S,i,c}|}.
\end{align}
(When $h_{\mu_0}(X,G)=0$, to prove
Proposition~\ref{pro:3.1-zero}, choose a point $x^*$ that is
generic for $\mu_0$ along the fixed tempered F{\o}lner sequence
$\{K_m\}_{m=1}^{\infty}$. By taking $n^*$ sufficiently large, we have
\[
x^*\in X_{K_n,\mathcal F^{(1/20)}}
\qquad\text{for every } n\ge n^*.
\]
The same empirical-measure estimate as above then gives
$x^*\in X_{J_{k,S,i,c},\mathcal F^{(1/5)}}$
for every $S\in\mathcal S_k$, $i=1,\dots,t_k$, and
$c\in C_{k,S,i}$. We therefore take
$\Gamma_{k,S,i,c}
=\Gamma_{k,S,i,c}^*
:=\{x^*\}$.)

Let
\[
\mathscr I_k
:=
\{(S,d,i,c):Sd\in\mathcal T_k,\ i=1,\dots,t_k,\ c\in C_{k,S,i}\}.
\]
Here $Sd\in\mathcal T_k$ means that $S\in\mathcal S_k$ and
$d$ is a center of a tile of shape $S$.
Define
\[
\Omega_k
:=
\prod_{(S,d,i,c)\in\mathscr I_k}\Gamma_{k,S,i,c}.
\]
For $\omega\in\Omega_k$, write its coordinate as
$\omega_{S,d,i,c}\in\Gamma_{k,S,i,c}$.
Let $Z_k^\#(\omega)$ be the set of all $z\in X$ such that
\begin{align}\label{3.5}
\rho_{J_{k,S,i,c}}
\bigl(cdz,\omega_{S,d,i,c}\bigr)
\le \Delta
\end{align}
for every $(S,d,i,c)\in\mathscr I_k$.
For any two distinct global good subtiles
\[
J_{k,S,i,c}cd
\quad\text{and}\quad
J_{k,S',i',c'}c'd',
\]
we have
$
F_\Delta J_{k,S,i,c}cd
\cap
J_{k,S',i',c'}c'd'
=
\emptyset.
$
Indeed, this follows from property~\textup{(1)} when the two
subtiles belong to the same tile, and from property~\textup{(3)}
together with the disjointness of the tiles otherwise.

Fix $\omega\in\Omega_k$. We prove that
$Z_k^\#(\omega)\neq\emptyset$. Let $\mathscr J\subset\mathscr I_k$ be finite and set
\[
x_{S,d,i,c}
:=
d^{-1}c^{-1}\omega_{S,d,i,c},
\qquad
(S,d,i,c)\in\mathscr J.
\] 
By the separation property established above, the specification
property yields $z_{\mathscr J}\in X$ such that
$
\rho\bigl(gz_{\mathscr J},gx_{S,d,i,c}\bigr)
\le\Delta
$
for every $(S,d,i,c)\in\mathscr J$ and every
$g\in J_{k,S,i,c}cd$. Since, for $g=jcd$,
$gx_{S,d,i,c}=j\omega_{S,d,i,c},$
it follows that
$
\rho_{J_{k,S,i,c}}
\bigl(cdz_{\mathscr J},\omega_{S,d,i,c}\bigr)
\le\Delta
$
for all $(S,d,i,c)\in\mathscr J$. Therefore, every finite
subcollection of the conditions defining $Z_k^\#(\omega)$ has
a common solution.

For each $(S,d,i,c)\in\mathscr I_k$, the corresponding tracing
condition defines a closed subset of $X$. By the preceding
argument, these closed sets have the finite intersection
property. Hence, by compactness of $X$,
$Z_k^\#(\omega)\neq\emptyset.$

Endow each finite set $\Gamma_{k,S,i,c}$ with the discrete
topology and $\Omega_k$ with the product topology. Since every
$\Gamma_{k,S,i,c}$ is nonempty and compact, $\Omega_k$ is nonempty and compact by
Tychonoff's theorem.

Define
\[
\mathcal R
:=
\left\{
(z,\omega)\in X\times\Omega_k:
\rho_{J_{k,S,i,c}}
\bigl(cdz,\omega_{S,d,i,c}\bigr)
\le\Delta
\text{ for all }(S,d,i,c)\in\mathscr I_k
\right\}.
\]
For each $(S,d,i,c)\in\mathscr I_k$, the map
$(z,\omega)
\mapsto
\rho_{J_{k,S,i,c}}
\bigl(cdz,\omega_{S,d,i,c}\bigr)$
is continuous. Hence $\mathcal R$ is closed in
$X\times\Omega_k$ and therefore compact. Moreover, since every
$Z_k^\#(\omega)$ is nonempty, $\mathcal R$ is nonempty.

Define
\[
Z^\#:=\pi_X(\mathcal R).
\]
Thus $Z^\#$ is nonempty and compact. For each $z\in Z^\#$, fix one admissible label assignment
$\omega(z)\in\Omega_k$
such that
$(z,\omega(z))\in\mathcal R$.
For $(S,d,i,c)\in\mathscr I_k$, define
$$
a_{k,S,d,i,c}(z):=\omega(z)_{S,d,i,c}.
$$
Then
$$a_{k,S,d,i,c}(z)\in\Gamma_{k,S,i,c}~~\text{and}~~
\rho_{J_{k,S,i,c}}
\bigl(cdz,a_{k,S,d,i,c}(z)\bigr)
\le\Delta.$$
This choice is fixed once and for all for later use. Pick $m_k$ sufficiently large such that, for every $m\ge m_k$, the set $K_m$ is $\left(
\bigcup\mathcal S_k,\,
\frac{\theta}{\left|\bigcup \mathcal S_k\right|}
\right)\text{-invariant}.
$
For every $m\ge m_k$, define
\[
Y_{m,k}
:=\bigcap_{s\in G}
\left\{
x\in X:
\mathcal E_{K_m s}(x)\in\mathcal F^{(4/5)}
\right\}.
\]
Since $\mathcal F^{(4/5)}$ is closed and each map
$x\mapsto\mathcal E_{K_ms}(x)$ is continuous, every set in the
intersection is closed. Hence $Y_{m,k}$ is closed. In addition, $Y_{m,k}$ is $G$-invariant.
\begin{lemma}\label{Lemma:3.3}
For every $m\ge m_k$,
	\[
	Z^\#\subset Y_{m,k}.
	\]
\end{lemma}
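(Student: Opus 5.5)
Fix $m\ge m_k$, $z\in Z^\#$ and $s\in G$; since $Y_{m,k}$ is an intersection over all translates, it suffices to show $\mathcal E_{K_ms}(z)\in\mathcal F^{(4/5)}$. Since $\mathcal E_{K_ms}(z)=\mathcal E_{K_m}(sz)$, this amounts to showing $|\langle f_j,\mathcal E_{K_ms}(z)\rangle-\langle f_j,\mu_0\rangle|\le \frac45\varepsilon_j$ for $j=1,\dots,p$.

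\textbf{Step 1: decompose $K_ms$ by the tiling.} The tiling $\mathcal T_k s^{-1}$, i.e.\ the right translate by $s^{-1}$, has the same shapes $\mathcal S_k$. Equivalently, we consider tiles $Sd\in\mathcal T_k$ with $Sd\subset K_ms$. Right translation preserves $(\cdot,\cdot)$-invariance, since $\partial_{A}(Fs)=(\partial_A F)s$. So $K_ms$ is $\bigl(\bigcup\mathcal S_k,\theta/|\bigcup\mathcal S_k|\bigr)$-invariant. Lemma~\ref{lem:2.7} then shows that the union $W$ of tiles of $\mathcal T_k$ contained in $K_ms$ satisfies $|W|>(1-\theta)|K_ms|$. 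Inside each such tile $Sd$, property~(3) gives $|\widetilde S_{k,S}d|>(1-4\alpha_k)|Sd|$. Hence the union $V$ of the good subtiles $J_{k,S,i,c}cd$ lying in tiles contained in $K_ms$ satisfies
\[
|V|>(1-\theta)(1-4\alpha_k)|K_ms|\ge(1-\theta-4\alpha_k)|K_m|.
\]
These subtiles are pairwise disjoint.

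\textbf{Step 2: estimate on each good subtile.} For $g=jcd$ with $j\in J:=J_{k,S,i,c}$ we have $gz=j(cdz)$ and $\rho(j\,cdz,\,j\,a_{k,S,d,i,c}(z))\le\Delta$. By the choice of $\Delta$, this gives $|f_l(gz)-f_l(j\,a(z))|<\varepsilon_l/5$. Averaging over $J$,
\[
|\langle f_l,\mathcal E_{Jcd}(z)\rangle-\langle f_l,\mathcal E_J(a(z))\rangle|<\varepsilon_l/5.
\]
Since $a(z)\in\Gamma_{k,S,i,c}\subset X_{J,\mathcal F^{(1/5)}}$ by the Claim, we obtain
\[
|\langle f_l,\mathcal E_{Jcd}(z)\rangle-\langle f_l,\mu_0\rangle|<\tfrac25\varepsilon_l.
\]
In the zero-entropy case we use $x^*$ instead, with the same estimate.

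\textbf{Step 3: convex combination.} Write $\mathcal E_{K_ms}(z)=\frac{|V|}{|K_m|}\mathcal E_V(z)+\frac{|K_ms\setminus V|}{|K_m|}\mathcal E_{K_ms\setminus V}(z)$. Here $\mathcal E_V(z)$ is a convex combination of the subtile averages, with weights proportional to their sizes, so it is within $\frac25\varepsilon_l$ of $\mu_0$ in $f_l$. The remainder has weight less than $\theta+4\alpha_k$ and changes $\langle f_l,\cdot\rangle$ by at most $2(\theta+4\alpha_k)$. Using $\theta<\varepsilon_{\min}/20$ from \eqref{3.3} and $8\alpha_k<\varepsilon_{\min}/5$ from the choice of $k^\#$, we get $2\theta+8\alpha_k<\varepsilon_l/10+\varepsilon_l/5$. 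The total error is therefore below $\frac25\varepsilon_l+\frac{3}{10}\varepsilon_l<\frac45\varepsilon_l$.

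The main obstacle is Step~1. One must handle the right translate $K_ms$ correctly. This means checking that Lemma~\ref{lem:2.7} applies to $K_ms$, either by translating invariance or by applying the lemma to the translated tiling $\mathcal T_ks^{-1}$, which is still a tiling with the same shapes. One must also check that the tracing estimates from \eqref{3.5} are correctly indexed as $g=jcd\in K_ms$. The remaining steps are routine bookkeeping of the constants chosen earlier.
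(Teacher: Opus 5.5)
Your proof is correct and follows the paper's argument: you use right-translation invariance to apply Lemma~\ref{lem:2.7} to $K_ms$, control each good subtile with the tracing estimate together with $\Gamma_{k,S,i,c}\subset X_{J_{k,S,i,c},\mathcal F^{(1/5)}}$, and then average. The only difference is bookkeeping: you merge the incomplete-tile remainder and the non-good parts of complete tiles into one set of relative size less than $\theta+4\alpha_k$, giving the bound $\tfrac{7}{10}\varepsilon_j$, whereas the paper averages in two stages (first within each complete tile, error $<\tfrac35\varepsilon_j$, then over $\widetilde{K_ms}$ and its complement, error $<\tfrac45\varepsilon_j$).
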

\begin{proof}
Fix $m\ge m_k$, $x\in Z^\#$, and $s\in G$. Let
\[
I_{k,m}(s):=
\{Sd\in\mathcal T_k:Sd\subset K_m s\},
\qquad
\widetilde{K_m s}
:=
\bigcup_{Sd\in I_{k,m}(s)}Sd.
\]
Right translation preserves $(L,\delta)$-invariance. Hence
$K_ms$ satisfies the same invariance condition as $K_m$, and
Lemma~\ref{lem:2.7} gives
\begin{equation}\label{3.6}
|\widetilde{K_m s}|>(1-\theta)|K_m|.
\end{equation}
We first estimate the empirical measure on a complete tile $Sd\in I_{k,m}(s)$. 
For such a tile, define the union of its good subtiles by
\[
\mathcal G_{S,d}
:=
\bigcup_{i=1}^{t_k}
\bigcup_{c\in C_{k,S,i}}
J_{k,S,i,c}cd.
\]
Since
$
\mathcal G_{S,d}
=
\widetilde S_{k,S}d,
$
and right translation by $d$ preserves cardinality, property
~\textup{(3)} gives
\[
|\mathcal G_{S,d}|
=
|\widetilde S_{k,S}|
>
(1-4\alpha_k)|S|.
\]
Hence
\begin{equation}\label{3.7}
|Sd\setminus \mathcal G_{S,d}|<4\alpha_k |S|.
\end{equation}
Fix $1\le i\le t_k$ and $c\in C_{k,S,i}$. Since $x\in Z^\#$, the fixed admissible label assignment
associated with $x$ satisfies
\[
a_{k,S,d,i,c}(x)\in\Gamma_{k,S,i,c}
~~
\text{and}
~~
\rho_{J_{k,S,i,c}}
\bigl(cdx,a_{k,S,d,i,c}(x)\bigr)
\le\Delta.
\]
By the choice of $\Delta$, the following estimate holds for
every $j=1,\dots,p$:
\[
\left|
f_j(ucdx)-f_j(u a_{k,S,d,i,c}(x))
\right|
<
\frac{\varepsilon_j}{5},
\qquad
u\in J_{k,S,i,c}.
\]
Averaging over $u\in J_{k,S,i,c}$ gives
\begin{equation}\label{3.8}
\left|
\left\langle f_j,\mathcal E_{J_{k,S,i,c}cd}(x)\right\rangle
-
\left\langle f_j,\mathcal E_{J_{k,S,i,c}}
\bigl(a_{k,S,d,i,c}(x)\bigr)\right\rangle
\right|
<
\frac{\varepsilon_j}{5}.
\end{equation}
Moreover,
\[
a_{k,S,d,i,c}(x)\in
\Gamma_{k,S,i,c}
\subset
X_{J_{k,S,i,c},\mathcal F^{(1/5)}}.
\]
Therefore
\[
\mathcal E_{J_{k,S,i,c}}
\bigl(a_{k,S,d,i,c}(x)\bigr)
\in
\mathcal F^{(1/5)}.
\]
By the definition of $\mathcal F^{(1/5)}$,
\begin{equation}\label{3.9}
\left|
\left\langle f_j,\mathcal E_{J_{k,S,i,c}}
\bigl(a_{k,S,d,i,c}(x)\bigr)\right\rangle
-
\left\langle f_j,\mu_0\right\rangle
\right|
\le
\frac{\varepsilon_j}{5}.
\end{equation}
Combining \eqref{3.8} and \eqref{3.9}, we obtain
\begin{equation}\label{3.10}
\left|
\left\langle f_j,\mathcal E_{J_{k,S,i,c}cd}(x)\right\rangle
-
\left\langle f_j,\mu_0\right\rangle
\right|
<
\frac{2\varepsilon_j}{5}.
\end{equation}
Since the good subtiles inside $Sd$ are pairwise disjoint, the empirical measure on
$\mathcal G_{S,d}$ is a convex combination of the empirical measures on the sets
$J_{k,S,i,c}cd$. Hence \eqref{3.10} implies
\begin{equation}\label{3.11}
\left|
\left\langle f_j,\mathcal E_{\mathcal G_{S,d}}(x)\right\rangle
-
\left\langle f_j,\mu_0\right\rangle
\right|
<
\frac{2\varepsilon_j}{5}.
\end{equation}
Let
\[
\mathcal B_{S,d}:=Sd\setminus \mathcal G_{S,d}.
\]
By convention, any term associated with an empty set in an
empirical-measure decomposition is omitted.
\[
\mathcal E_{Sd}(x)
=
\frac{|\mathcal G_{S,d}|}{|S|}
\mathcal E_{\mathcal G_{S,d}}(x)
+
\frac{|\mathcal B_{S,d}|}{|S|}
\mathcal E_{\mathcal B_{S,d}}(x).
\]
If $\mathcal B_{S,d}\neq\emptyset$, then, since
$\|f_j\|_\infty\le1$, we have
\[
\left|
\left\langle f_j,\mathcal E_{\mathcal B_{S,d}}(x)\right\rangle
-
\left\langle f_j,\mu_0\right\rangle
\right|
\le 2.
\]
If $\mathcal B_{S,d}=\emptyset$, the corresponding term is
omitted. Therefore, in either case, the contribution of
$\mathcal B_{S,d}$ to the above decomposition is bounded by
$\frac{2|\mathcal B_{S,d}|}{|S|}$.
Using \eqref{3.7}, \eqref{3.11}, and
$8\alpha_k<\frac{\varepsilon_{\min}}{5}\le \frac{\varepsilon_j}{5}$,
we get
\[
\begin{aligned}
\left|
\left\langle f_j,\mathcal E_{Sd}(x)\right\rangle
-
\left\langle f_j,\mu_0\right\rangle
\right|
&\le
\frac{|\mathcal G_{S,d}|}{|S|}
\frac{2\varepsilon_j}{5}
+
\frac{2|\mathcal B_{S,d}|}{|S|}  <
\frac{2\varepsilon_j}{5}
+
8\alpha_k <
\frac{3\varepsilon_j}{5}.
\end{aligned}
\]
Therefore, for every complete tile $Sd\in I_{k,m}(s)$ and
every $j=1,\dots,p$,
\begin{equation}\label{3.12}
\left|
\left\langle f_j,\mathcal E_{Sd}(x)\right\rangle
-
\left\langle f_j,\mu_0\right\rangle
\right|
<
\frac{3\varepsilon_j}{5}.
\end{equation}
Since $\widetilde{K_m s}$ is the disjoint union of the complete tiles
$Sd\in I_{k,m}(s)$, the empirical measure
$\mathcal E_{\widetilde{K_m s}}(x)$ is a convex combination of the measures
$\mathcal E_{Sd}(x)$. Hence \eqref{3.12} gives
\begin{equation}\label{3.13}
\left|
\left\langle f_j,\mathcal E_{\widetilde{K_m s}}(x)\right\rangle
-
\left\langle f_j,\mu_0\right\rangle
\right|
<
\frac{3\varepsilon_j}{5}.
\end{equation}
Finally, we compare the averages over $K_m s$ and over $\widetilde{K_m s}$. Put
\[
R_{m,s}:=K_m s\setminus \widetilde{K_m s}.
\]
As above, if $R_{m,s}=\emptyset$, the corresponding term in
the following decomposition is omitted.
By \eqref{3.6},
\[
|R_{m,s}|<\theta |K_m|.
\]
Also,
\[
\mathcal E_{K_m s}(x)
=
\frac{|\widetilde{K_m s}|}{|K_m|}
\mathcal E_{\widetilde{K_m s}}(x)
+
\frac{|R_{m,s}|}{|K_m|}
\mathcal E_{R_{m,s}}(x).
\]
 Since $\|f_j\|_\infty\le1$, we have
\[
\left|
\left\langle f_j,\mathcal E_{K_m s}(x)\right\rangle
-
\left\langle f_j,\mathcal E_{\widetilde{K_m s}}(x)\right\rangle
\right|
\le
2\frac{|R_{m,s}|}{|K_m|}
<
2\theta.
\]
By the choice of $\theta$,
$2\theta<\frac{\varepsilon_j}{5}.$
Combining this with \eqref{3.13}, we obtain
\[
\begin{aligned}
\left|
\left\langle f_j,\mathcal E_{K_m s}(x)\right\rangle
-
\left\langle f_j,\mu_0\right\rangle
\right|
&\le
\left|
\left\langle f_j,\mathcal E_{K_m s}(x)\right\rangle
-
\left\langle f_j,\mathcal E_{\widetilde{K_m s}}(x)\right\rangle
\right|+
\left|
\left\langle f_j,\mathcal E_{\widetilde{K_m s}}(x)\right\rangle
-
\left\langle f_j,\mu_0\right\rangle
\right| \\
&<
\frac{\varepsilon_j}{5}
+
\frac{3\varepsilon_j}{5}
=
\frac{4\varepsilon_j}{5}.
\end{aligned}
\]
This holds for every $j=1,\dots,p$. Therefore
\[
\mathcal E_{K_m s}(x)\in \mathcal F^{(4/5)}.
\]
Since $s\in G$ was arbitrary, $x\in Y_{m,k}$. Thus
\[
Z^\#\subset Y_{m,k}.
\]
\end{proof}
Define
\[
Y:=\bigcap_{m\ge m_k}Y_{m,k}.
\]
Then $Y$ is closed and $G$-invariant, and Lemma \ref{Lemma:3.3} gives $
Z^\#\subset Y$.
\begin{proposition}\label{pro:measure-control}
Let
\[
\Lambda:=\overline{GZ^\#}.
\]
Then $\Lambda$ is a nonempty compact $G$-invariant subset of
$X$, and
$\mathcal M(\Lambda,G)
\subset
B(\mu_0,\eta_0)$.
\end{proposition}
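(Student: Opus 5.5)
Since $Z^\#$ is nonempty and compact (it is the projection of the compact set $\mathcal R$), $\Lambda=\overline{GZ^\#}$ is a nonempty closed subset of $X$. It is therefore compact. It is also $G$-invariant, because $GZ^\#$ is $G$-invariant and each $g$ acts as a homeomorphism, so $g\overline{GZ^\#}=\overline{gGZ^\#}=\overline{GZ^\#}$. Next, by Lemma~\ref{Lemma:3.3}, $Z^\#\subset Y=\bigcap_{m\ge m_k}Y_{m,k}$, and $Y$ is closed and $G$-invariant. Hence $GZ^\#\subset Y$, and taking closures gives $\Lambda\subset Y$. Thus every $x\in\Lambda$ satisfies $\mathcal E_{K_ms}(x)\in\mathcal F^{(4/5)}$ for all $m\ge m_k$ and all $s\in G$; only $s=e$ will be needed.

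To control $\mathcal M(\Lambda,G)$, take first an ergodic $\nu\in\mathcal M_e(\Lambda,G)$. We use the pointwise ergodic theorem along the tempered F{\o}lner sequence $\{K_m\}$ (Lindenstrauss). It gives a point $x\in\Lambda$, $\nu$-generic, with
\[
\langle f_j,\mathcal E_{K_m}(x)\rangle\to\langle f_j,\nu\rangle,\qquad j=1,\dots,p.
\]
Each $\mathcal E_{K_m}(x)$ lies in the closed set $\mathcal F^{(4/5)}$, so letting $m\to\infty$ gives $|\langle f_j,\nu\rangle-\langle f_j,\mu_0\rangle|\le\frac45\varepsilon_j$ for every $j$, i.e.\ $\nu\in\mathcal F^{(4/5)}$.

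For general $\nu\in\mathcal M(\Lambda,G)$, we pass through the ergodic decomposition. The measure $\nu$ is the barycenter of a probability measure supported on $\mathcal M_e(\Lambda,G)$, and $\nu\mapsto\langle f_j,\nu\rangle$ is affine and continuous. Integrating the ergodic bound over the decomposition gives $|\langle f_j,\nu\rangle-\langle f_j,\mu_0\rangle|\le\frac45\varepsilon_j$, so $\mathcal M(\Lambda,G)\subset\mathcal F^{(4/5)}\cap\mathcal M(X,G)$. Since $\mathcal F^{(4/5)}\subset\mathcal F^{(1)}$ and $\mathcal F^{(1)}\cap\mathcal M(X,G)\subset B(\mu_0,\eta_0)$, the claim follows.

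Everything above is bookkeeping except the passage from uniform empirical-measure bounds to invariant measures, which is where a theorem is genuinely needed. If I want to avoid the pointwise ergodic theorem, I would instead work directly with a general $\nu\in\mathcal M(\Lambda,G)$. By invariance, $\langle f_j,\nu\rangle=\int\langle f_j,\mathcal E_{K_m}(x)\rangle\,d\nu(x)$ for every $m$. Since $\mathcal E_{K_m}(x)\in\mathcal F^{(4/5)}$ for every $x\in\Lambda$, integrating the pointwise bound gives the estimate directly, with no ergodic decomposition. This Fubini argument is simpler, so I would use it as the main proof.
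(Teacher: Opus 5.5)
Your proposal is correct, and the argument you settle on as the main proof is exactly the paper's proof: show $\Lambda\subset Y$, then integrate the pointwise bound $\mathcal E_{K_m}(x)\in\mathcal F^{(4/5)}$ (with $s=e$) against $\nu$, using $\langle f_j,\nu\rangle=\int\langle f_j,\mathcal E_{K_m}(x)\rangle\,d\nu(x)$. Your alternative through Lindenstrauss's pointwise ergodic theorem and the ergodic decomposition is also valid, since the fixed F{\o}lner sequence is tempered, but as you note it is unnecessary.
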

\begin{proof}
Since $Z^\#\subset Y$ and $Y$ is closed and $G$-invariant,
$\Lambda=\overline{GZ^\#}\subset Y$. Since $Z^\#$ is nonempty, it follows immediately that
$\Lambda$ is nonempty, compact, and $G$-invariant.

Let $\nu\in\mathcal M(\Lambda,G)$ and fix $m\ge m_k$.
By the $G$-invariance of $\nu$, for every $f\in C(X)$,
\[
\begin{aligned}
\int_\Lambda
\left\langle f,\mathcal E_{K_m}(y)\right\rangle
\,d\nu(y)
&=
\frac{1}{|K_m|}
\sum_{u\in K_m}
\int_\Lambda f(uy)\,d\nu(y)\\
&=
\frac{1}{|K_m|}
\sum_{u\in K_m}
\langle f,u_*\nu\rangle\\
&=
\langle f,\nu\rangle.
\end{aligned}
\]
For every $y\in\Lambda\subset Y$, taking $s=e$ in the
definition of $Y_{m,k}$ gives
$
\mathcal E_{K_m}(y)\in\mathcal F^{(4/5)}.
$
Hence, for every $j=1,\dots,p$,
\[
\left|
\left\langle f_j,\mathcal E_{K_m}(y)\right\rangle
-
\left\langle f_j,\mu_0\right\rangle
\right|
\le
\frac{4\varepsilon_j}{5}.
\]
Therefore,
\[
\begin{aligned}
\left|
\langle f_j,\nu\rangle
-
\langle f_j,\mu_0\rangle
\right|
&=
\left|
\int_\Lambda
\left(
\left\langle f_j,\mathcal E_{K_m}(y)\right\rangle
-
\left\langle f_j,\mu_0\right\rangle
\right)
\,d\nu(y)
\right|\\
&\le
\int_\Lambda
\left|
\left\langle f_j,\mathcal E_{K_m}(y)\right\rangle
-
\left\langle f_j,\mu_0\right\rangle
\right|
\,d\nu(y)\\
&\le
\frac{4\varepsilon_j}{5}.
\end{aligned}
\]
Thus
$
\nu\in\mathcal F^{(4/5)}
\subset\mathcal F^{(1)}.
$
Since $\nu\in\mathcal M(X,G)$, the choice of
$\mathcal F^{(1)}$ yields
\[
\nu\in
\mathcal F^{(1)}\cap\mathcal M(X,G)
\subset
B(\mu_0,\eta_0).
\]
Hence, 
$\mathcal M(\Lambda,G)\subset B(\mu_0,\eta_0).$
\end{proof}
\subsection{Lower bound for entropy}
\label{subsec:entropy-lower}
In this subsection, we establish the entropy lower bound.
For $m\ge m_k$, let
\[
I_{k,m}
:=
\left\{
Sd\in\mathcal T_k:
Sd\subset K_m
\right\}
\]
and
\[
\widetilde K_m
:=
\bigcup_{Sd\in I_{k,m}}Sd.
\]
By the choice of $m_k$ and Lemma~\ref{lem:2.7},
$|\widetilde K_m|
>
(1-\theta)|K_m|
~~
\text{for every }m\ge m_k.$

Define
$
\Gamma(K_m)
:=
\prod_{Sd\in I_{k,m}}
\prod_{i=1}^{t_k}
\prod_{c\in C_{k,S,i}}
\Gamma_{k,S,i,c}.
$
For $\omega\in\Gamma(K_m)$, denote its coordinates by
$\omega_{S,d,i,c}\in\Gamma_{k,S,i,c},
~~
Sd\in I_{k,m}.$

Since every factor $\Gamma_{k,S,i,c}$ is nonempty, fix a
reference configuration $\omega^0\in\Omega_k$. For each
$\omega\in\Gamma(K_m)$, define
$\widehat\omega\in\Omega_k$ by
\[
\widehat{\omega}_{S,d,i,c}
:=
\begin{cases}
\omega_{S,d,i,c},
& Sd\in I_{k,m},\\[2mm]
\omega^0_{S,d,i,c},
& Sd\notin I_{k,m}.
\end{cases}
\]
Since $Z_k^\#(\widehat\omega)\neq\varnothing$, choose
$z_\omega
\in
Z_k^\#(\widehat\omega)
\subset
Z^\#.$
Then, for every $Sd\in I_{k,m}$, $1\le i\le t_k$, and
$c\in C_{k,S,i}$,
\begin{align}\label{3.14}
\rho_{J_{k,S,i,c}}
\bigl(cdz_\omega,\omega_{S,d,i,c}\bigr)
\le\Delta.
\end{align}
Set
\[Z_m^\#:=\left\{z_\omega:\omega\in\Gamma(K_m)
\right\}.\]
\begin{claim}
The set $Z_m^\#$ is $(K_m,\Delta)$-separated.
\end{claim}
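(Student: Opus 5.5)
Take two distinct labels $\omega\neq\omega'$ in $\Gamma(K_m)$ and show that $z_\omega$ and $z_{\omega'}$ are more than $\Delta$ apart in the Bowen metric $\rho_{K_m}$. This also shows $\omega\mapsto z_\omega$ is injective, so $|Z_m^\#|=|\Gamma(K_m)|$.

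Since $\omega\neq\omega'$, some index $(S,d,i,c)$ with $Sd\in I_{k,m}$ has $x:=\omega_{S,d,i,c}$ and $y:=\omega'_{S,d,i,c}$ distinct points of $\Gamma_{k,S,i,c}$. Recall that $\Gamma_{k,S,i,c}\subset\Gamma^*_{k,S,i,c}=I_{k,i}$, and by the Claim this set is $\bigl(J_{k,S,i,c},\tfrac{\delta^*}{2},\varepsilon^*\bigr)$-separated. So there is at least one (in fact a positive proportion of) $u\in J_{k,S,i,c}$ with $\rho(ux,uy)>\varepsilon^*$. Fix such a $u$.

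By \eqref{3.14}, applied to both $z_\omega$ and $z_{\omega'}$,
\[
\rho(ucdz_\omega,ux)\le\Delta,\qquad \rho(ucdz_{\omega'},uy)\le\Delta .
\]
The triangle inequality then gives
\[
\rho(ucdz_\omega,ucdz_{\omega'})\ \ge\ \rho(ux,uy)-2\Delta\ >\ \varepsilon^*-2\Delta\ >\ \Delta,
\]
where the last step uses the choice $3\Delta<\varepsilon^*$.

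It remains to check that $g:=ucd$ lies in $K_m$. We have $u\in J_{k,S,i,c}\subset K_{n_{k,i}}$, so $g\in J_{k,S,i,c}cd$. Since $J_{k,S,i,c}c\subset\widetilde S_{k,S}\subset S$ by property (3), it follows that $g\in Sd$. Finally $Sd\subset K_m$ because $Sd\in I_{k,m}$. Hence $\rho_{K_m}(z_\omega,z_{\omega'})>\Delta$, and $Z_m^\#$ is $(K_m,\Delta)$-separated.

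The only points needing care are the inclusion $J_{k,S,i,c}cd\subset Sd$, which rests on property (3) of the refinement, and the fact that the two labels differ inside a tile $Sd\in I_{k,m}$. The latter holds automatically, because $\Gamma(K_m)$ indexes only complete tiles contained in $K_m$; outside those tiles both $\widehat\omega$ and $\widehat{\omega'}$ agree with the reference configuration $\omega^0$.
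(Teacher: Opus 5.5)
Your proposal is correct and follows the paper's proof essentially line for line. You pick a differing coordinate $(S,d,i,c)$ with $Sd\in I_{k,m}$, take a point $u\in J_{k,S,i,c}$ from the $\bigl(J_{k,S,i,c},\frac{\delta^*}{2},\varepsilon^*\bigr)$-separation, apply \eqref{3.14} and the triangle inequality with $3\Delta<\varepsilon^*$, and use the inclusion $ucd\in J_{k,S,i,c}cd\subset Sd\subset K_m$. Your extra remarks (that subsets of separated sets remain separated, and that the differing index must lie in a complete tile) are accurate and simply make explicit what the paper leaves implicit.
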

\begin{proof}
Let $\omega,\omega'\in\Gamma(K_m)$ be distinct. Then there
exist $Sd\in I_{k,m}$, $1\le i\le t_k$, and
$c\in C_{k,S,i}$ such that
$\omega_{S,d,i,c}\neq\omega'_{S,d,i,c}$.
Since $\Gamma_{k,S,i,c}$ is
$\left(
J_{k,S,i,c},
\frac{\delta^*}{2},
\varepsilon^*
\right)\text{-separated}$,
there exists $u\in J_{k,S,i,c}$ such that
\[
\rho
\bigl(
u\omega_{S,d,i,c},
u\omega'_{S,d,i,c}
\bigr)
>
\varepsilon^*.
\]
By \eqref{3.14} and the triangle inequality,
\[
\rho
\bigl(
ucdz_\omega,
ucdz_{\omega'}
\bigr)
>
\varepsilon^*-2\Delta
>
\Delta.
\]
Moreover,
$ucd\in J_{k,S,i,c}cd\subset Sd\subset
K_m$.
Hence
$\rho_{K_m}(z_\omega,z_{\omega'})>
\Delta$.
Thus the map $\omega\mapsto z_\omega$ is injective, and
$Z_m^\#$ is $(K_m,\Delta)$-separated.
\end{proof}
Therefore,
$|Z_m^\#|=|\Gamma(K_m)|.$
Since
$
Z_m^\#
\subset
Z^\#
\subset
\Lambda,
$
we have
$s(\Lambda,K_m,\Delta)
\ge
|\Gamma(K_m)|$.
Using \eqref{3.4}, we obtain
\[
\begin{aligned}
|\Gamma(K_m)|
&=
\prod_{Sd\in I_{k,m}}
\prod_{i=1}^{t_k}
\prod_{c\in C_{k,S,i}}
|\Gamma_{k,S,i,c}|\\
&\ge
\exp\left(
h_1
\sum_{Sd\in I_{k,m}}
\sum_{i=1}^{t_k}
\sum_{c\in C_{k,S,i}}
|J_{k,S,i,c}|
\right).
\end{aligned}
\]
For every $Sd\in I_{k,m}$, properties~\textup{(2)} and
\textup{(3)} give
\[
\sum_{i=1}^{t_k}
\sum_{c\in C_{k,S,i}}
|J_{k,S,i,c}|
>
(1-4\alpha_k)|S|.
\]
Since $\widetilde K_m$ is the disjoint union of the complete
tiles in $I_{k,m}$,
\[
\begin{aligned}
\sum_{Sd\in I_{k,m}}
\sum_{i=1}^{t_k}
\sum_{c\in C_{k,S,i}}
|J_{k,S,i,c}|
&>
(1-4\alpha_k)|\widetilde K_m|>
(1-\theta)(1-4\alpha_k)|K_m|.
\end{aligned}
\]
Hence
$$
s(\Lambda,K_m,\Delta)
\ge
\exp\left(
h_1(1-\theta)(1-4\alpha_k)|K_m|
\right).
$$
Therefore,
\[
\begin{aligned}
h(\Lambda,G)
&\ge h(\Lambda,G,\Delta)\\
&=
\limsup_{m\to\infty}
\frac{1}{|K_m|}
\log s(\Lambda,K_m,\Delta)\\
&\ge
h_1(1-\theta)(1-4\alpha_k)\\
&>
h_0.
\end{aligned}
\]
\subsection{Upper bound for entropy}\label{upper bound 3.3}
In this subsection, we establish the entropy upper bound. 

Let
$A:=GZ^\#$.
Fix an enumeration
$G=\{\ell_1,\ell_2,\dots\}$.
For $x\in A$, define
\[
j(x):=\min\{j:\ell_j^{-1}x\in Z^\#\},
\qquad
g_x:=\ell_{j(x)},
\qquad
z_x:=g_x^{-1}x.
\]
Then
$x=g_xz_x,
~~
z_x\in Z^\#$.
Define the tiling alphabet
$\mathcal A_{\mathcal T}:=\mathcal S_k\sqcup\{0\}$
and the tiling symbolic point
$x_{\mathcal T_k}\in\mathcal A_{\mathcal T}^G$
by
$$x_{\mathcal T_k}(d)=\begin{cases}S,& \text{if }Sd\in\mathcal T_k,\\0,& \text{otherwise}.\end{cases}$$
Define the phase alphabet
\[
\mathcal A_{\mathrm{ph}}
:=
\left(
\bigcup_{S\in\mathcal S_k}
\bigcup_{i=1}^{t_k}
\bigcup_{c\in C_{k,S,i}}
\bigl(
\{S\}\times\{i\}\times\{c\}\times J_{k,S,i,c}
\bigr)
\right)
\sqcup\{\mathrm{bad}\}.
\]
Since all the involved families are finite,
$\mathcal A_{\mathrm{ph}}$ is a finite alphabet.

Fix $m\ge m_k$ sufficiently large. For $g\in G$, define
$$\operatorname{Ph}_m(g):K_m\to\mathcal A_{\mathrm{ph}}$$
by
$$\operatorname{Ph}_m(g)(s)
=
\begin{cases}
(S,i,c,t),
&
\begin{array}{l}
\text{if }S\in\mathcal S_k,\ 1\le i\le t_k,\
c\in C_{k,S,i},\\
\text{and there exists }d\in G\text{ such that }
Sd\in\mathcal T_k,\ Sd\subset K_mg,\\
sg=tcd,\quad t\in J_{k,S,i,c};
\end{array}\\[4mm]
\mathrm{bad},&\text{otherwise}.
\end{cases}$$
This is well defined. Indeed, each $sg$ belongs to a unique
tile of $\mathcal T_k$, and the injectivity of the shape-center
map determines the corresponding pair $(S,d)$ uniquely.
Within this tile, the good subtiles are pairwise disjoint, so
$(i,c)$ is unique. Finally, for fixed $c$ and $d$, the identity
$sg=tcd$ determines $t$ uniquely.

For $x=g_xz_x\in A$, define
$\operatorname{Ph}_m(x):=\operatorname{Ph}_m(g_x)$.
Choose a finite set $R\subset G$ such that
$R\supset\{e\}\cup\bigcup_{S\in\mathcal S_k}S^{-1}$.
For $g\in G$, define
$$P_m(g):=(g x_{\mathcal T_k})|_{RK_m}\in\mathcal A_{\mathcal T}^{RK_m}.$$
Here we use the right-shift convention:
$$(g x)(u)=x(ug).$$
Thus for $u\in {RK_m}$, we have $ P_m(g)(u)=x_{\mathcal T_k}(ug)$.

Define
\[
\Phi_m:
\mathcal A_{\mathcal T}^{RK_m}
\longrightarrow
\mathcal A_{\mathrm{ph}}^{K_m}
\]
as follows. Let $P\in\mathcal A_{\mathcal T}^{RK_m}$ and
$s\in K_m$. Since
$a^{-1}s\in S^{-1}K_m\subset RK_m$
for every $S\in\mathcal S_k$ and $a\in S$, the value
$P(a^{-1}s)$ is well defined.

If there exist $S,a,i,c,t$ satisfying
\[
\{(S,a)\}
=
\left\{
(S',a'):
S'\in\mathcal S_k,\ 
a'\in S',\
P(a'^{-1}s)=S'
\right\},
\]
\[
Sa^{-1}s\subset K_m,
\]
and
\[
\{(i,c,t)\}
=
\left\{
(i',c',t'):
\begin{array}{l}
1\le i'\le t_k,\quad
c'\in C_{k,S,i'},\\
t'\in J_{k,S,i',c'},\quad
a=t'c'
\end{array}
\right\},
\]
then set
\[
\Phi_m(P)(s):=(S,i,c,t).
\]
In all other cases, set
\[
\Phi_m(P)(s):=\mathrm{bad}.
\]
\begin{lemma}\label{lemma:3.4}
For every $g\in G$,
\[
\operatorname{Ph}_m(g)
=
\Phi_m(P_m(g)).
\]
\end{lemma}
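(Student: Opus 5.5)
The approach is pointwise: fix $g\in G$ and $s\in K_m$, and show that $\operatorname{Ph}_m(g)(s)$ and $\Phi_m(P_m(g))(s)$ coincide. Throughout I use $P:=P_m(g)$, so that $P(u)=x_{\mathcal T_k}(ug)$ for $u\in RK_m$. The first step is to unpack what the uniqueness condition on $(S,a)$ says. For $S'\in\mathcal S_k$ and $a'\in S'$ we have $P(a'^{-1}s)=S'$ exactly when $a'^{-1}sg$ is a center of a tile of shape $S'$, that is, when $S'a'^{-1}sg\in\mathcal T_k$. This tile contains $a'a'^{-1}sg=sg$, because $e\in S'$ forces $a'\in S'$ to be the element taking the center to $sg$. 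Since $\mathcal T_k$ is a partition and the map $(S,c)\mapsto Sc$ is injective, there is exactly one tile $Sd$ containing $sg$. Writing $sg=ad$ with $a\in S$, the pair $(S',a')$ satisfies $P(a'^{-1}s)=S'$ if and only if $(S',a')=(S,a)$. The only subtle point is uniqueness of $a$ given $S$ and $d$, and this holds because $a=sgd^{-1}$. So the set in the first condition is always the singleton $\{(S,a)\}$, where $d=a^{-1}sg$ is the center of the tile containing $sg$.

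Next I translate the remaining conditions. The tile containment condition in $\operatorname{Ph}_m(g)$ is $Sd\subset K_mg$, which is equivalent to $Sdg^{-1}=Sa^{-1}s\subset K_m$; this is exactly the second condition in the definition of $\Phi_m$. For the last condition, $sg=tcd$ with $t\in J_{k,S,i,c}$ and $c\in C_{k,S,i}$ is equivalent to $a=sgd^{-1}=tc$. So the set of triples $(i',c',t')$ with $a=t'c'$ is precisely the set of good-subtile labels of the point $sg$. Because the sets $J_{k,S,i,c}c$ are pairwise disjoint (property (2) of the refinement), this set has at most one element. Moreover, for fixed $c'$ the element $t'=ac'^{-1}$ is determined.

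The verification then splits into cases. If $Sd\subset K_mg$ and $sg$ lies in some good subtile $J_{k,S,i,c}cd$, both maps output $(S,i,c,t)$ with the same data. If $Sd\not\subset K_mg$, both output $\mathrm{bad}$. If $Sd\subset K_mg$ but $sg$ lies in no good subtile, the triple set is empty, so again both output $\mathrm{bad}$.

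The main obstacle is the careful bookkeeping with the right-shift convention. One has to check that $a'^{-1}s\in RK_m$ so that $P$ is defined there, which holds since $S^{-1}\subset R$. One also has to check that the center $d$ recovered from the symbolic data is $a^{-1}sg$ and not some left-translate of it, and to rely on the normalization $e\in S$ for every shape. I expect the rest to be routine once these identifications are written out.
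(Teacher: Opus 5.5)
Your proposal is correct and follows the paper's proof essentially step for step. Both arguments fix the unique tile $Sd\ni sg$ with $a=sgd^{-1}$, use the partition property and injectivity of $(S,c)\mapsto Sc$ to show that the first condition of $\Phi_m$ singles out exactly $(S,a)$, translate $Sd\subset K_mg$ into $Sa^{-1}s\subset K_m$ and $sg=tcd$ into $a=tc$, and conclude with the same case analysis. One small correction: the normalization $e\in S$ plays no role here, since the tile with shape $S'$ and center $a'^{-1}sg$ contains $sg$ simply because $a'\in S'$.
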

\begin{proof}
Fix $g\in G$, and let $s\in K_m$. Since $\mathcal T_k$ is a
tiling, there exists a unique tile $Sd\in\mathcal T_k$
containing $sg$. Since $sg\in Sd$, there exists a unique
$a\in S$ such that
$sg=ad$.
Hence
\[
d=a^{-1}sg=(a^{-1}s)g.
\]
Since $a^{-1}\in S^{-1}\subset R$, we have
$a^{-1}s\in RK_m$, and therefore
\[
P_m(g)(a^{-1}s)
=
x_{\mathcal T_k}((a^{-1}s)g)
=
x_{\mathcal T_k}(d)
=
S.
\]
We next show that $(S,a)$ is the unique pair with this
property. Suppose that
\[
P_m(g)(a'^{-1}s)=S'
\]
for some $S'\in\mathcal S_k$ and $a'\in S'$. Set
\[
d':=(a'^{-1}s)g.
\]
Then $x_{\mathcal T_k}(d')=S'$, so
$S'd'\in\mathcal T_k$. Moreover,
$sg=a'd'\in S'd'$.
Since $Sd$ is the unique tile containing $sg$, we have
\[
S'd'=Sd.
\]
By the injectivity of the shape-center map,
\[
S'=S
\qquad\text{and}\qquad
d'=d.
\]
Since $sg=a'd'=ad$, it follows that $a'=a$. Thus $(S,a)$ is the unique pair satisfying the first condition
in the definition of $\Phi_m(P_m(g))(s)$.

Next, since $d=(a^{-1}s)g$, the condition
$Sd\subset K_mg$
in the definition of $\operatorname{Ph}_m(g)(s)$ is equivalent
to the condition
$Sa^{-1}s\subset K_m$
in the definition of $\Phi_m(P_m(g))(s)$.

Assume that the tile $Sd$ is complete. Since $sg=ad$, we have
$sg\in J_{k,S,i,c}cd$
precisely when
$a\in J_{k,S,i,c}c,$
or equivalently, when
\[
a=tc
\qquad
\text{for some }t\in J_{k,S,i,c}.
\]
The pairwise disjointness of the sets $J_{k,S,i,c}c$ ensures
that $(i,c)$ is unique, and then $t=ac^{-1}$ is also unique.
Hence, whenever $sg$ lies in a good subtile, both maps assign
the same symbol $(S,i,c,t)$ to $s$.

If $Sd$ is not complete, or if $Sd$ is complete but $sg$ lies
outside all its good subtiles, then both maps assign
$\mathrm{bad}$ to $s$.
Therefore,
\[
\operatorname{Ph}_m(g)(s)
=
\Phi_m(P_m(g))(s).
\]
Since this holds for every $s\in K_m$, we conclude that
\[
\operatorname{Ph}_m(g)
=
\Phi_m(P_m(g)).
\]
\end{proof}
Consequently, by Lemma \ref{lemma:3.4}, we have
$$\#\{\operatorname{Ph}_m(g):g\in G\}\le\#\{P_m(g):g\in G\}.$$
Let
$$X_{\mathcal T_k}=\overline{\{g x_{\mathcal T_k}:g\in G\}}\subset \mathcal A_{\mathcal T}^G.$$
For finite $F\subset G$, put
$$
N_{\mathcal T_k}(F)
=
\left|
\{y|_F:y\in X_{\mathcal T_k}\}
\right|.
$$
Since
$$P_m(g)=(g x_{\mathcal T_k})|_{RK_m},$$
we have
$$\#\{P_m(g):g\in G\}\le N_{\mathcal T_k}(RK_m).$$
Since $R$ is finite and $\{K_m\}_{m=1}^\infty$ is a Følner sequence, we have
$\frac{|RK_m|}{|K_m|}\to1$ and
$\{RK_m\}_{m=1}^\infty$ is also a Følner sequence. Hence, by 
$h(\mathcal T_k)=0$, we have
\[
\lim_{m\to\infty}\frac1{|RK_m|}
\log N_{\mathcal T_k}(RK_m)=0.
\]
Together with $\frac{|RK_m|}{|K_m|}\to1$, this implies that, for all sufficiently large $m$,
\[
\begin{aligned}
\#\{\operatorname{Ph}_m(g):g\in G\}
&\le
\#\{P_m(g):g\in G\}\\
&\le
N_{\mathcal T_k}(RK_m)\le
e^{\beta|K_m|}.
\end{aligned}
\] 
\begin{lemma}
For every sufficiently large $m$,
$$
s(A,K_m,\alpha)
\le
e^{(h_1+3\beta)|K_m|}.
$$
\end{lemma}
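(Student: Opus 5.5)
We bound the number of points in a $(K_m,\alpha)$-separated subset $E\subset A$ by constructing a map $x\mapsto \operatorname{code}(x)$ that is injective on $E$, and then counting the codes. The code of $x=g_xz_x\in A$ has three parts: the phase pattern $\operatorname{Ph}_m(x)=\operatorname{Ph}_m(g_x)$; a \emph{label pattern}, namely the labels $a_{k,S,d,i,c}(z_x)\in\Gamma_{k,S,i,c}$ for every complete tile $Sd\subset K_mg_x$ and every good subtile $J_{k,S,i,c}cd$ inside it; and a \emph{residual pattern}, namely for each $s\in K_m$ whose phase symbol is $\mathrm{bad}$, an index $q\in\{1,\dots,r_Q\}$ with $\rho(sx,q)\le\alpha/4$, chosen using the $\frac{\alpha}{4}$-spanning set $Q$.

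First, injectivity on $E$. Suppose $x,y\in E$ have the same code, and fix $s\in K_m$. If the phase symbol at $s$ is $\mathrm{bad}$, then $sx$ and $sy$ are within $\alpha/4$ of the same $q$, so $\rho(sx,sy)\le\alpha/2$. If the phase symbol at $s$ is $(S,i,c,t)$, then $sg_x=tcd$ and $sg_y=tcd'$, so $sx=tcdz_x$ and $sy=tcd'z_y$. By \eqref{3.5}, $\rho(tcdz_x,ta)\le\Delta$ and $\rho(tcd'z_y,ta')\le\Delta$, where $a$ and $a'$ are the labels. Equality of the label patterns must force $a=a'$, and this requires the labels to be indexed relative to $K_m$ via the phase data. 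Concretely, we list the labels of complete tiles in the order determined by the tile positions in $K_m$, so that the indexing is by the phase pattern and not by absolute $d$. Then $a=a'$ and $\rho(sx,sy)\le 2\Delta<\alpha$. Hence $\rho_{K_m}(x,y)\le\alpha$, which contradicts $(K_m,\alpha)$-separation. Note that the separation definition requires a strict inequality $>\alpha$, so this suffices.

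Next, counting. The number of phase patterns is at most $e^{\beta|K_m|}$ by the preceding estimate. Given a phase pattern, the set of complete tiles and their good subtiles is determined, together with the multiset of their sizes $|J_{k,S,i,c}|$, which sum to at most $|K_m|$. By \eqref{3.4}, the number of label patterns is at most
\[
\prod|\Gamma_{k,S,i,c}|<e^{(h_1+\beta)|K_m|}.
\]
The bad set has size at most
\[
|K_m\setminus\widetilde{K_mg}|+4\alpha_k|K_m|<(\theta+4\alpha_k)|K_m|,
\]
using Lemma~\ref{lem:2.7} for the right translate, together with property~(3). So the number of residual patterns is at most
\[
r_Q^{(\theta+4\alpha_k)|K_m|}<e^{\beta|K_m|}
\]
by the choice of $k^\#$. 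Multiplying the three bounds gives
\[
s(A,K_m,\alpha)\le e^{(h_1+3\beta)|K_m|}.
\]

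The main obstacle is the bookkeeping in the injectivity step. The labels of $x$ and $y$ refer to tiles $Sd$ and $Sd'$ at different absolute positions, so the code must be phrased relatively: a tile is identified by the phase coordinates in $K_m$ that it occupies, i.e.\ by the set of $s$ with a given $(S,c)$-phase pattern forming one tile. I would define the label part of the code as a function on the set of complete tiles seen from $K_m$, which is determined by the phase pattern, and check that equal phase patterns give the same tile decomposition of $K_m$. This follows from Lemma~\ref{lemma:3.4}, since the phase symbols $(S,i,c,t)$ at $s$ recover $a=tc$ and hence the relative tile $Sa^{-1}s$.
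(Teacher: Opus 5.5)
Your proposal is correct and follows essentially the same route as the paper. The paper uses the same three-part code: the phase pattern $\operatorname{Ph}_m(g_x)$, the labels $a_{k,S,d,i,c}(z_x)$ indexed by the relative position $r=dg_x^{-1}=c^{-1}t^{-1}s$ (your relative tile $Sa^{-1}s$ with $a=tc$ is exactly $Sr$), and $Q$-codes on the bad positions; this code is injective on $(K_m,\alpha)$-separated sets and takes at most $e^{\beta|K_m|}\cdot e^{(h_1+\beta)|K_m|}\cdot e^{\beta|K_m|}$ values. One minor correction: the phase symbol at $s$ determines the relative index directly from the definition of $\operatorname{Ph}_m$ (since $sg=tcd$ gives $dg^{-1}=c^{-1}t^{-1}s$), not from Lemma~\ref{lemma:3.4}, which is needed only to count the phase patterns.
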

\begin{proof}
Fix a sufficiently large $m$, and let
$\mathcal P_m:=\{\operatorname{Ph}_m(g):g\in G\}$.
Fix $p\in\mathcal P_m$ and choose $g\in G$ such that
$
p=\operatorname{Ph}_m(g)$.
For $s\in K_m$, if 
	$p(s)=(S,i,c,t),$
	then there exists $d\in G$ such that
	$sg=tcd$.
	Let
\[
r:=dg^{-1}.
\]
Then
\[
s=tcr,
\qquad
r=c^{-1}t^{-1}s.
\]
Since a good phase requires $Sd\subset K_mg$ and $J_{k,S,i,c}cd\subset Sd$, we have
	$$
	J_{k,S,i,c}cr\subset K_m.
	$$
Define
\[
\mathscr B(p)
:=
\left\{
(S,i,c,r):
\begin{array}{l}
S\in\mathcal S_k,\quad
1\le i\le t_k,\quad
c\in C_{k,S,i},\\
\exists s\in K_m,\ \exists t\in J_{k,S,i,c}
\text{ such that}\\
p(s)=(S,i,c,t)
\text{ and }
r=c^{-1}t^{-1}s
\end{array}
\right\}.
\]
The corresponding good subtile
instances are pairwise disjoint and contained in $K_m$.
Hence
\[
\sum_{(S,i,c,r)\in\mathscr B(p)}
|J_{k,S,i,c}|
\le
|K_m|.
\]
For $x=g_xz_x\in A$, define
\[
\operatorname{Lab}_m(x)(S,i,c,r)
:=
a_{k,S,rg_x,i,c}(z_x),
\qquad
(S,i,c,r)\in
\mathscr B(\operatorname{Ph}_m(x)).
\]
 The number of good label codes
associated with $p$ is at most
\[
\prod_{(S,i,c,r)\in\mathscr B(p)}
|\Gamma_{k,S,i,c}|.
\]
By \eqref{3.4},
\[
\begin{aligned}
\prod_{(S,i,c,r)\in\mathscr B(p)}
|\Gamma_{k,S,i,c}|
&\le
\exp\left(
(h_1+\beta)
\sum_{(S,i,c,r)\in\mathscr B(p)}
|J_{k,S,i,c}|
\right)\\
&\le
e^{(h_1+\beta)|K_m|}.
\end{aligned}
\]
Define
\[
B(p):=\{s\in K_m:p(s)=\mathrm{bad}\}.
\]
Let $B_1(g)$ consist of those $s\in K_m$ for which the tile
containing $sg$ is not fully contained in $K_mg$, and let $B_2(g)$
consist of those $s\in K_m$ for which the tile containing $sg$ is
fully contained in $K_mg$, but $sg$ lies outside the good subtiles.
Then
\[
B(p)=B_1(g)\sqcup B_2(g).
\]
By the complete-tile covering estimate,
$|B_1(g)|\le\theta|K_m|$.
Since the good subtiles cover more than a proportion
$1-4\alpha_k$ of every complete tile,
$|B_2(g)|\le4\alpha_k|K_m|$.
Consequently,
\[
|B(p)|\le(\theta+4\alpha_k)|K_m|.
\]
By the choices of $\theta$ and $k$,
$(\theta+4\alpha_k)\log r_Q<\beta$.
For $x\in A$ and $s\in B(\operatorname{Ph}_m(x))$, define
	$$
	\ell(s,x)
	:=
	\min\left\{
	1\le \ell\le r_Q:
	\rho(sx,q_\ell)\le\frac{\alpha}{4}
	\right\},
	$$
	and set
	$$
	\operatorname{Bad}_m(x)(s):=q_{\ell(s,x)}.
	$$
The number of possible bad codes associated with $p$ is at most
	$$
	|Q|^{|B(p)|}
	\le
	e^{\beta |K_m|}.
	$$
	Define the total code
	$$
	\Pi_m(x)
	:=
	\left(
	\operatorname{Ph}_m(x),
	\operatorname{Lab}_m(x),
	\operatorname{Bad}_m(x)
	\right),
	$$
	and set
	$$
	\mathcal C_m:=\{\Pi_m(x):x\in A\}.
	$$
Combining the estimates for the phase, good label, and bad codes, we obtain
	$$
	|\mathcal C_m|
	\le
	e^{\beta |K_m|}
	e^{(h_1+\beta)|K_m|}
	e^{\beta |K_m|}
	=
	e^{(h_1+3\beta)|K_m|}.
	$$
	Let $E\subset A$ be any $(K_m,\alpha)$-separated set. We prove that
	$$
	\Pi_m|_E:E\to\mathcal C_m
	$$
	is injective. Suppose $x,y\in E$ and $\Pi_m(x)=\Pi_m(y)$. Write
	$$
	x=g_xz_x,
	\qquad
	y=g_yz_y.
	$$
	Fix $s\in K_m$. If $\operatorname{Ph}_m(x)(s)\ne\mathrm{bad}$, then
	$$
	\operatorname{Ph}_m(x)(s)
	=
	\operatorname{Ph}_m(y)(s)
	=
	(S,i,c,t).
	$$
	Thus there exist $d_x,d_y\in G$ such that
	$$
	sg_x=tcd_x,
	\qquad
	sg_y=tcd_y.
	$$
	Let
	$$
	r_x:=d_xg_x^{-1},
	\qquad
	r_y:=d_yg_y^{-1}.
	$$
	Then
	$$
	r_x=c^{-1}t^{-1}s=r_y.
	$$
Denote their common value by $r$. Then
$d_x=rg_x
~~and~~
d_y=rg_y$.
Since the label codes coincide at the normalized index
$(S,i,c,r)$, we have
\[
\begin{aligned}
a_{k,S,d_x,i,c}(z_x)=
\operatorname{Lab}_m(x)(S,i,c,r)=
\operatorname{Lab}_m(y)(S,i,c,r)=
a_{k,S,d_y,i,c}(z_y).
\end{aligned}
\]
Denote this common label by
$\omega\in\Gamma_{k,S,i,c}$.
Therefore
	$$
	\rho(tcd_xz_x,t\omega)\le\Delta,
	\qquad
	\rho(tcd_yz_y,t\omega)\le\Delta.
	$$
	Since
	$$
	sx=tcd_xz_x,
	\qquad
	sy=tcd_yz_y,
	$$
	we get
	$$
	\rho(sx,sy)\le2\Delta<\alpha.
	$$
If $\operatorname{Ph}_m(x)(s)=\mathrm{bad}$, then also
	$\operatorname{Ph}_m(y)(s)=\mathrm{bad}$.
	Since the bad codes coincide, there exists $q\in Q$ such that
	\[
	\rho(sx,q)\le\frac{\alpha}{4}
	\quad\text{and}\quad
	\rho(sy,q)\le\frac{\alpha}{4}.
	\]
	Therefore
	$\rho(sx,sy)\le\frac{\alpha}{2}<\alpha.$
Thus, for every $s\in K_m$,
	$\rho(sx,sy)<\alpha.$
	Hence
	$\rho_{K_m}(x,y)<\alpha$.
	Since $E$ is $(K_m,\alpha)$-separated, it follows that
$x=y$. Thus $\Pi_m|_E$ is injective.
     Hence
	$$
	|E|\le |\mathcal C_m|.
	$$
	Taking the supremum over all such $E$, we obtain
	$$
	s(A,K_m,\alpha)\le |\mathcal C_m|
	\le e^{(h_1+3\beta)|K_m|}.
	$$
\end{proof}
\begin{proposition}
Set $\gamma:=3\alpha$. Then
\[
\gamma\in(0,\varepsilon_0)
~~\text{and}~~
h(\Lambda,G,\gamma)<h_0+\beta_0.
\]
Consequently, together with the entropy lower bound,
$\Lambda$ satisfies conclusion~\textup{(2)} of
Proposition~\ref{pro:3.1}.
\end{proposition}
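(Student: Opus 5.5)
The plan is to transfer the separated-set bound just established for $A=GZ^\#$ at scale $\alpha$ to the closure $\Lambda=\overline{A}$ at the larger scale $\gamma=3\alpha$, and then invoke the inequality $h_1+3\beta<h_0+\beta_0$ from \eqref{3.1}. The membership $\gamma\in(0,\varepsilon_0)$ is immediate from $0<\alpha<\varepsilon_0/3$.

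\textbf{Step 1 (closure at a larger scale).} Fix $m$ large enough that the preceding lemma applies, and let $E\subset\Lambda$ be $(K_m,\gamma)$-separated. Since $K_m$ is finite and $G$ acts by homeomorphisms, the map $x\mapsto (gx)_{g\in K_m}$ is continuous. Hence for each $x\in E$ we can pick $x'\in A$ with $\rho_{K_m}(x,x')<\alpha$.

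For distinct $x,y\in E$, there is some $g\in K_m$ with $\rho(gx,gy)>3\alpha$. Then
\[
\rho(gx',gy')>3\alpha-2\alpha=\alpha,
\]
so the map $x\mapsto x'$ is injective and its image is $(K_m,\alpha)$-separated in $A$. Therefore
\[
s(\Lambda,K_m,\gamma)\le s(A,K_m,\alpha)\le e^{(h_1+3\beta)|K_m|}
\]
for all sufficiently large $m$.

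\textbf{Step 2 (entropy bound).} Taking $\frac{1}{|K_m|}\log$ and the $\limsup$ gives
\[
h(\Lambda,G,\gamma)\le h_1+3\beta=h_0+13\beta<h_0+\beta_0,
\]
where the last inequality is \eqref{3.1}. Combined with the lower bound $h(\Lambda,G)>h_0$ from Subsection~\ref{subsec:entropy-lower}, and with Proposition~\ref{pro:measure-control} for part~(1), this gives conclusion~(2) of Proposition~\ref{pro:3.1}.

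\textbf{Main obstacle.} The only point needing care is the passage from $A$ to its closure. The coding map $\Pi_m$ is defined only on $A$ and is not continuous, so it cannot be extended to $\Lambda$ directly. This is exactly why the proof approximates points of $\Lambda$ by points of $A$ in the Bowen metric $\rho_{K_m}$ and absorbs the error into the scale, using the $2\alpha$ margin between $\alpha$ and $\gamma=3\alpha$. No further uniformity in $m$ is needed, since the approximation is carried out separately for each fixed $m$.
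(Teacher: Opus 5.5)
Your proposal is correct and follows essentially the same route as the paper. Both arguments approximate each point of a $(K_m,3\alpha)$-separated set $E\subset\Lambda=\overline{A}$ by a point of $A$ in the Bowen metric $\rho_{K_m}$, deduce $s(\Lambda,K_m,3\alpha)\le s(A,K_m,\alpha)\le e^{(h_1+3\beta)|K_m|}$, and conclude using $h_1+3\beta<h_0+\beta_0$ from \eqref{3.1}. The only difference is the tolerance: the paper uses $\alpha/4$, which gives separation $5\alpha/2$, while you use $\alpha$, which still gives the strict separation $>\alpha$ needed.
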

\begin{proof}
Fix a sufficiently large $m$, and let
$E\subset\Lambda$ be a finite
$(K_m,3\alpha)$-separated set. Since
	$\Lambda=\overline A$,
	for each $x\in E$ we can choose $y_x\in A$ such that
	$
	\rho_{K_m}(x,y_x)<\frac{\alpha}{4}.
	$
	If $x\ne x'$, then
	$$
	\rho_{K_m}(x,x')>3\alpha.
	$$
Hence
\[
\begin{aligned}
\rho_{K_m}(y_x,y_{x'})
&\ge
\rho_{K_m}(x,x')
-
\rho_{K_m}(x,y_x)
-
\rho_{K_m}(x',y_{x'})\\
&>
3\alpha-\frac{\alpha}{4}-\frac{\alpha}{4}=
\frac{5\alpha}{2}>\alpha.
\end{aligned}
\]
	Thus $\{y_x:x\in E\}$ is a $(K_m,\alpha)$-separated subset of $A$, and the map $x\mapsto y_x$ is injective. Therefore
	$$
	|E|\le s(A,K_m,\alpha).
	$$
	Since $E$ was arbitrary, we have
	$$
	s(\Lambda,K_m,3\alpha)
	\le
	s(A,K_m,\alpha)
	\le
	e^{(h_1+3\beta)|K_m|}.
	$$
By \eqref{3.1}, we have
\[
\begin{aligned}
h(\Lambda,G,\gamma)
&=
h(\Lambda,G,3\alpha)\\
&=
\limsup_{m\to\infty}
\frac{1}{|K_m|}
\log s(\Lambda,K_m,3\alpha)\\
&\le
h_1+3\beta\\
&<
h_0+\beta_0.
\end{aligned}
\]
\end{proof}
\subsection{Proof of Theorem~\ref{thm:1.3}}
In this subsection, we prove Theorem~\ref{thm:1.3}.
\begin{proof}
Let $\mu\in\mathcal M(X,G)$, and let $U$ be a neighborhood of
$\mu$ in $\mathcal M(X)$.

Suppose first that $h_\mu(X,G)=0$. Choose $\eta_0>0$ such that
$B(\mu,2\eta_0)\subset U$.
Since entropy-denseness implies that $\mathcal M_e(X,G)$ is dense
in $\mathcal M(X,G)$, there exists
$\mu_0\in\mathcal M_e(X,G)$ such that
$D(\mu,\mu_0)<\eta_0$.

If $h_{\mu_0}(X,G)>0$, Proposition~\ref{pro:3.1}, applied with
any $h_0\in(0,h_{\mu_0}(X,G))$, yields a nonempty compact
$G$-invariant set $\Lambda\subset X$ such that
$\mathcal M(\Lambda,G)\subset B(\mu_0,\eta_0)$.

If $h_{\mu_0}(X,G)=0$, the same conclusion follows by applying
Proposition~\ref{pro:3.1-zero}. 
Consequently,
$\mathcal M(\Lambda,G)\subset B(\mu,2\eta_0)\subset U$.
Thus, the result follows in the case $h_\mu(X,G)=0$.

We may therefore assume that $h_\mu(X,G)>0$. Let
$h\in(0,h_\mu(X,G))$ and fix $\varepsilon,\beta>0$. Choose
$\eta_0>0$ such that
\[
B(\mu,2\eta_0)\subset U.
\]
By Proposition~\ref{pro:2.15}, there exists
$\mu_0\in\mathcal M_e(X,G)$ such that
\[
D(\mu,\mu_0)<\eta_0
\quad\text{and}\quad
h_{\mu_0}(X,G)>h.
\]
Applying Proposition~\ref{pro:3.1} to
$\mu_0,h,\eta_0,\beta$, and $\varepsilon$, we obtain
$\gamma\in(0,\varepsilon)$ and a compact $G$-invariant subset
$\Lambda\subset X$ such that
\[
\mathcal M(\Lambda,G)\subset B(\mu_0,\eta_0),\qquad
h(\Lambda,G)>h,
\]
and
\[
h(\Lambda,G,\gamma)<h+\beta.
\]
For every $\nu\in\mathcal M(\Lambda,G)$,
\[
D(\nu,\mu)
\leq D(\nu,\mu_0)+D(\mu_0,\mu)
<2\eta_0.
\]
Hence,
\[
\mathcal M(\Lambda,G)\subset B(\mu,2\eta_0)\subset U.
\]
\end{proof}
\section{Intermediate entropies}
This section is devoted to proving Theorem~\ref{thm:1.4} and Corollary~\ref{cor:1.5}. 
Note that Theorem~\ref{thm:1.4} (\ref{theorem 4:1}) is an easy consequence of Theorem~\ref{thm:1.3} and Proposition~\ref{pro:4.1}.
\begin{proposition}\label{pro:4.1}
	Let $G$ be a countably infinite discrete amenable group and let $(X,G)$ be a compact metric $G$-system. Assume that $(X,G)$ is asymptotically entropy expansive. If $\mu\in \mathcal{M}(X,G)$ is almost entropy-approximable, then $\mu$ is entropy-approximable.
\end{proposition}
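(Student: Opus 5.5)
Given a neighborhood $U$ of $\mu$, a level $h\in(0,h_\mu(X,G))$ and $\beta>0$, the plan is to use asymptotic entropy expansiveness to choose the scale $\varepsilon$. Almost entropy-approximability then controls the entropy of a compact invariant set at a fixed small scale, and the proposition of \cite[Theorem 2]{RS16} (applied to the subsystem) upgrades this to control of the full topological entropy. (If $h_\mu(X,G)=0$ the two conventions coincide and there is nothing to prove.)

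The first step is to choose $\varepsilon>0$ with $h_{\mathrm{loc}}(\varepsilon,G)<\beta/2$; this uses the definition of asymptotic entropy expansiveness with the fixed tempered sequence $\{K_n\}$ chosen so that $h_{\mathrm{loc}}(\varepsilon,G)\to0$. Next, apply almost entropy-approximability with $U$, $h$, this $\varepsilon$, and $\beta/2$ in place of $\beta$. This gives a compact $G$-invariant $\Lambda$ and $\gamma\in(0,\varepsilon)$ with
\[
\mathcal M(\Lambda,G)\subset U,\qquad h(\Lambda,G)>h,\qquad h(\Lambda,G,\gamma)<h+\tfrac{\beta}{2}.
\]
Then apply the entropy estimate of \cite[Theorem 2]{RS16} to the subsystem $(\Lambda,G)$ at scale $\gamma$:
\[
h(\Lambda,G)\le h(\Lambda,G,\gamma)+h^{\Lambda}_{\mathrm{loc}}(\gamma,G)<h+\tfrac{\beta}{2}+h^{\Lambda}_{\mathrm{loc}}(\gamma,G),
\]
where $h^{\Lambda}_{\mathrm{loc}}$ is the local entropy computed inside $\Lambda$. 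Together with $h(\Lambda,G)>h$, this yields $h<h(\Lambda,G)<h+\beta$ as soon as $h^{\Lambda}_{\mathrm{loc}}(\gamma,G)\le\beta/2$.

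The main point to check is this last inequality. The plan is to compare the local entropy of the subsystem with that of $X$. Every $\nu\in\mathcal M(\Lambda,G)$ is also in $\mathcal M(X,G)$. For a compact $A\subset\Lambda$, the quantities $s(K_n,\delta,\varepsilon,A)$ are defined purely from $A$, the metric and the action, so they are the same whether computed in $\Lambda$ or in $X$. For compact $A\subset X$ with $\nu(A)>\sigma$, the set $A\cap\Lambda$ is compact, has the same $\nu$-measure, and satisfies $h(A\cap\Lambda,\gamma)\le h(A,\gamma)$ by monotonicity in $A$. Hence $h^{\Lambda}_{\nu,\mathrm{loc}}(\gamma,G)\le h_{\nu,\mathrm{loc}}(\gamma,G)$. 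Next, $h_{\mathrm{loc}}(\cdot,G)$ is nondecreasing in the scale: a smaller Bowen ball is contained in a larger one, so $s(K_n,\delta,\gamma,A,x)\le s(K_n,\delta,\varepsilon,A,x)$. Using $\gamma<\varepsilon$, these combine to
\[
h^{\Lambda}_{\mathrm{loc}}(\gamma,G)\le h_{\mathrm{loc}}(\gamma,G)\le h_{\mathrm{loc}}(\varepsilon,G)<\tfrac{\beta}{2}.
\]
This gives the required bound and finishes the argument. One should also note that \cite[Theorem 2]{RS16} applies to $(\Lambda,G)$ with the same F{\o}lner sequence $\{K_n\}$, since it holds for any topological dynamical system.
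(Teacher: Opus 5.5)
Your proof is correct and follows the paper's route: choose the scale via asymptotic entropy expansiveness, apply almost entropy-approximability with $\beta/2$, and bound $h(\Lambda,G)\le h(\Lambda,G,\gamma)+h_{\mathrm{loc}}(\gamma,G)$ via \cite[Theorem 2]{RS16}. The differences are minor. You explicitly check that the local entropy of the subsystem $(\Lambda,G)$ is dominated by that of $(X,G)$, a step the paper uses without comment. You also use monotonicity of $h_{\mathrm{loc}}$ in the scale, where the paper instead chooses $\varepsilon$ so that $h_{\mathrm{loc}}(\varepsilon',G)<\beta/2$ for all $\varepsilon'<\varepsilon$.
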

\begin{proof}
 If $h_\mu(X,G)=0$, the conclusion follows immediately from
the zero-entropy convention, since almost entropy-approximability
and entropy-approximability are defined by the same condition in
this case. Hence, we may assume that $h_\mu(X,G)>0$.

 Let $U$ be a neighborhood of $\mu$ in $\mathcal{M}(X)$, $0<h<h_\mu(X,G)~~ \text{and}~\beta>0$.
	
Since $(X,G)$ is asymptotically entropy expansive, we may choose $\varepsilon>0$ so small that
	$$
	h_{\text{loc}}(\varepsilon',G)<\frac{\beta}{2}
	\qquad
	\text{for every }0<\varepsilon'<\varepsilon.$$
By the almost entropy-approximability of $\mu$, there exist a compact $G$-invariant set $\Lambda\subset X$ and a number $\gamma\in(0,\varepsilon)$
such that
$$\mathcal{M}(\Lambda,G)\subset U,~~h(\Lambda,G)>h~~\text{and}~~h(\Lambda,G,\gamma)<h+\frac{\beta}{2}.$$
The entropy-expansiveness estimate gives
$$h(\Lambda,G)\le h(\Lambda,G,\gamma)+h_{\text{loc}}(\gamma,G).$$
Therefore
$$h(\Lambda,G)<h+\frac{\beta}{2}+\frac{\beta}{2}=h+\beta.$$
Thus $$\mathcal{M}(\Lambda,G)\subset U~~\text{and}~~h<h(\Lambda,G)<h+\beta.$$
This proves that $\mu$ is entropy-approximable.
\end{proof}
Proposition~\ref{pro:2.11} states that if the system is asymptotically entropy expansive, then the entropy map is upper semi-continuous. Thus, Theorem~\ref{thm:1.4} (\ref{theorem 4:2}) is obtained by combining Theorem~\ref{thm:1.4} (\ref{theorem 4:1}) with Proposition~\ref{pro:4.2}.
\begin{proposition}\label{pro:4.2}
	Suppose that every measure in $\mathcal M(X,G)$ is
	entropy-approximable and that the entropy map
	$\mu\mapsto h_\mu(X,G)$
	is upper semi-continuous on $\mathcal M(X,G)$. Then $(X,G)$ is
	entropy-generic.
\end{proposition}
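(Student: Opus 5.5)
Fix $\alpha\in[0,h(X,G))$. The plan is to show that $\mathcal M_e(X,G,\alpha)$ is a $G_\delta$ subset of $\mathcal M^\alpha(X,G)$ and that it is dense there. First, $\mathcal M^\alpha(X,G)=\{\mu:h_\mu\ge\alpha\}$ is closed by upper semi-continuity, hence a compact metric space, and thus a Baire space. For $n\ge1$ let
\[
V_n:=\left\{\mu\in\mathcal M^\alpha(X,G): h_\mu(X,G)<\alpha+\tfrac1n\right\},
\]
which is open in $\mathcal M^\alpha(X,G)$ by upper semi-continuity. Then $\mathcal M_e(X,G,\alpha)=\mathcal M_e(X,G)\cap\bigcap_n V_n$. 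Since $\mathcal M_e(X,G)$ is a $G_\delta$ in $\mathcal M(X,G)$, it follows that $\mathcal M_e(X,G,\alpha)$ is a $G_\delta$ in $\mathcal M^\alpha(X,G)$. It therefore remains to prove density of $\mathcal M_e(X,G,\alpha)$ in $\mathcal M^\alpha(X,G)$. By the Baire category theorem, it suffices to show that $\mathcal M_e(X,G)\cap\mathcal M^\alpha$ and each $V_n$ are dense in $\mathcal M^\alpha$; more economically, it suffices that each $W_n:=\mathcal M_e(X,G)\cap V_n$, a $G_\delta$ of $\mathcal M^\alpha$, is dense. This holds because $\bigcap_n W_n=\mathcal M_e(X,G,\alpha)$ and a countable intersection of dense $G_\delta$ sets in a Baire space is dense.

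Density of $W_n$: fix $\mu\in\mathcal M^\alpha$ and a neighborhood $U$ of $\mu$. The goal is an ergodic $\nu\in U$ with $\alpha\le h_\nu<\alpha+1/n$. First arrange $h_\mu>\alpha$ strictly. If $h_\mu=\alpha<h(X,G)$, replace $\mu$ by $\mu'=(1-\lambda)\mu+\lambda\mu_M$ with $\lambda$ small, where $\mu_M$ comes from Proposition~\ref{pro:2.11}. By Proposition~\ref{Pro:2.4} the measure $\mu'$ lies in $U$, and $h_{\mu'}>\alpha$ by affinity of entropy. Shrink $U$ to a neighborhood of $\mu'$ inside $U$.

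Now, with $h_\mu>\alpha$, the case $\alpha>0$ is handled as follows. Apply entropy-approximability with $h=\alpha$ and $\beta=1/n$ to get a compact invariant $\Lambda$ with $\mathcal M(\Lambda,G)\subset U$ and $\alpha<h(\Lambda,G)<\alpha+1/n$. By the variational principle on $\Lambda$, there is an ergodic $\nu\in\mathcal M_e(\Lambda,G)\subset\mathcal M_e(X,G)$ with $h_\nu\in(\alpha,\alpha+1/n)$. This $\nu$ lies in $U\cap W_n$.

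For $\alpha=0$, apply the definition with $h$ small, say $h<\min\{h_\mu,1/(2n)\}$, and $\beta=1/(2n)$. Taking any ergodic $\nu$ on the resulting $\Lambda$, we have $h_\nu\le h(\Lambda,G)<1/n$. If $h_\mu=0$, use the convention that $\Lambda$ exists with $\mathcal M(\Lambda,G)\subset U$, and note $h_\nu\le h_\mu$? No; instead, having already perturbed $\mu$ to $h_{\mu'}>0$ above, this case reduces to the previous one.

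The main subtlety I expect is the case $h_\mu=\alpha$, handled by the convex-combination perturbation toward $\mu_M$. This uses $\alpha<h(X,G)$ and the existence of a measure of maximal entropy, which is why upper semi-continuity is needed there. A secondary point is that ergodic measures of $(\Lambda,G)$ are ergodic for $(X,G)$ and that the variational principle on $\Lambda$ gives an ergodic measure of entropy strictly above $\alpha$ when $h(\Lambda,G)>\alpha$.
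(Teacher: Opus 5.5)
Your proof is correct and follows essentially the same route as the paper: you show that each $\mathcal M_e(X,G)\cap\{\alpha\le h_\mu<\alpha+1/n\}$ is a dense $G_\delta$ in the Baire space $\mathcal M^\alpha(X,G)$. You do this by pushing $\mu$ to entropy strictly above $\alpha$ through a small convex combination, then applying entropy-approximability and the variational principle on $\Lambda$. The differences are cosmetic: the paper perturbs toward any ergodic $\mu_X$ with $h_{\mu_X}>\alpha$ supplied by the variational principle, so no measure of maximal entropy is needed (and Proposition~\ref{pro:2.11} assumes asymptotic entropy expansiveness, so your $\mu_M$ really comes from upper semi-continuity plus compactness). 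The paper also avoids your case split at $\alpha=0$ by applying approximability at a level $a\in\bigl(\alpha,\min\{h_{\mu'}(X,G),\alpha+1/n\}\bigr)$.
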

\begin{proof}
	Fix $0\leq\alpha<h(X,G)$. Since the entropy map is upper
	semi-continuous, $\mathcal M^\alpha(X,G)$ is a compact metric
	subspace of $\mathcal M(X,G)$ and hence is a Baire space.
	For every finite real number $\alpha'>\alpha$,
	denote
	\[
	\mathcal M(\alpha,\alpha')
	:=
	\left\{
	\mu\in\mathcal M(X,G):
	\alpha\leq h_\mu(X,G)<\alpha'
	\right\}.
	\]
	By upper semi-continuity,
	$\mathcal M(0,\alpha')$
	is open in $\mathcal M(X,G)$. Since
	\[
	\mathcal M(\alpha,\alpha')
	=
	\mathcal M(0,\alpha')\cap\mathcal M^\alpha(X,G),
	\]
	the set $\mathcal M(\alpha,\alpha')$ is open in the subspace
	$\mathcal M^\alpha(X,G)$.
Let
	\[
	\mathcal M_e(\alpha,\alpha')
	:=
	\mathcal M(\alpha,\alpha')\cap\mathcal M_e(X,G).
	\]
	Since $\mathcal M_e(X,G)$ is a $G_\delta$ subset of
	$\mathcal M(X,G)$, the set $\mathcal M_e(\alpha,\alpha')$ is a
	$G_\delta$ subset of $\mathcal M^\alpha(X,G)$.
	We next prove that $\mathcal M_e(\alpha,\alpha')$ is dense in
	$\mathcal M^\alpha(X,G)$. Let
	\[
	\mu\in\mathcal M^\alpha(X,G)
	\quad\text{and}\quad
	\eta>0.
	\]
Recall that $D^*$ is the diameter of $\mathcal M(X)$. Shrinking $\eta$ if necessary, we may assume that
$0<\eta<3D^*$.
By the variational principle, we can choose
$\mu_X\in\mathcal M_e(X,G)$ such that
$h_{\mu_X}(X,G)>\alpha$.
 Denote
	\[
	\mu'
	:=
	\left(1-\frac{\eta}{3D^*}\right)\mu
	+
	\frac{\eta}{3D^*}\mu_X.
	\]
	Then
	\[
	D(\mu',\mu)\leq\frac{\eta}{3}
\quad\text{and}\quad
	h_{\mu'}(X,G)
	>
	\alpha.
	\]
Set
\[
M:=\min\{h_{\mu'}(X,G),\alpha'\}.
\]
Then $M>\alpha$. Choose
\[
a\in(\alpha,M)
\quad\text{and}\quad
0<\beta'<M-a.
\]
By the entropy-approximability of $\mu'$, there exists
a compact $G$-invariant set $\Lambda\subset X$ such that
\[
\mathcal M(\Lambda,G)
\subset B\left(\mu',\frac{\eta}{3}\right)
~~
and
~~
a<h(\Lambda,G)<a+\beta'<M.
\]
In particular,
$\alpha<h(\Lambda,G)<
\min\{h_{\mu'}(X,G),\alpha'\}$.

By the variational principle, there exists
$\nu\in\mathcal M_e(\Lambda,G)\subset B(\mu,\eta)$ such that
$h_\nu(X,G)\in[\alpha,\alpha')$.
It follows that
$\nu\in\mathcal M_e(\alpha,\alpha')$
and hence
\[
\mathcal M_e(\alpha,\alpha')\cap B(\mu,\eta)
\neq\varnothing.
\]
This implies that $\mathcal M_e(\alpha,\alpha')$ is dense in
$\mathcal M^\alpha(X,G)$.

Consequently, each $\mathcal M_e(\alpha,\alpha')$ is residual in
$\mathcal M^\alpha(X,G)$. Hence
\[
\mathcal M_e(X,G,\alpha)
=
\bigcap_{k=1}^{\infty}
\mathcal M_e\left(\alpha,\alpha+\frac{1}{k}\right)
\]
is residual in $\mathcal M^\alpha(X,G)$.
\end{proof}
Entropy-genericity yields Corollary~\ref{cor:1.5} as an immediate consequence of Theorem~\ref{thm:1.4} (\ref{theorem 4:2}) and Proposition~\ref{pro:4.3}.
\begin{proposition}\label{pro:4.3}
	Assume that $(X,G)$ is entropy-generic. 
	Then for every $\mu\in \mathcal{M}(X,G)$ and every neighborhood $U$ of $\mu$, we have
	$$
	\begin{cases}
	\mathcal{H}(X,G,U)\supset [0,h_\mu(X,G)],
	& \text{if } h_\mu(X,G)<h(X,G),\\[1mm]
	\mathcal{H}(X,G,U)\supset [0,h_\mu(X,G)),
	& \text{if } h_\mu(X,G)=h(X,G).
	\end{cases}
	$$
\end{proposition}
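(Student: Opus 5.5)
The proof uses entropy-genericity directly, together with the affine structure of the simplex. Fix $\mu\in\mathcal M(X,G)$ and a neighborhood $U$ of $\mu$; shrinking $U$, I assume $U=B(\mu,\eta)$ for some $\eta>0$. Fix a target value $\alpha$: either $\alpha\in[0,h_\mu(X,G)]$ with $h_\mu(X,G)<h(X,G)$, or $\alpha\in[0,h_\mu(X,G))$. In both cases $0\le\alpha<h(X,G)$, so entropy-genericity applies. It says that $\mathcal M_e(X,G,\alpha)$ is residual in the Baire subspace $\mathcal M^\alpha(X,G)$, and in particular dense there. Hence it suffices to find a point of $\mathcal M^\alpha(X,G)$ inside $U$ that lies in the interior of $U\cap\mathcal M^\alpha(X,G)$ relative to $\mathcal M^\alpha(X,G)$. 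The simplest such point is $\mu$ itself. Since $\alpha\le h_\mu(X,G)$, we have $\mu\in\mathcal M^\alpha(X,G)$. The set $U\cap\mathcal M^\alpha(X,G)$ is a nonempty relatively open subset of $\mathcal M^\alpha(X,G)$ containing $\mu$.

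Some care is needed about what "residual" implies. A residual subset of a Baire space is dense, so it meets every nonempty relatively open set. In the proof of Proposition~\ref{pro:4.2}, the fact that $\mathcal M^\alpha(X,G)$ is a Baire space was derived from upper semi-continuity. The definition of entropy-generic alone does not supply that. To stay self-contained, I would avoid relying on the Baire property. Instead I would read "residual" as containing a dense $G_\delta$ set, which again gives density in $\mathcal M^\alpha(X,G)$. Either way, I obtain $\nu\in\mathcal M_e(X,G)\cap U$ with $h_\nu(X,G)=\alpha$, so $\alpha\in\mathcal H(X,G,U)$.

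The case split in the statement is exactly what guarantees $\alpha<h(X,G)$, which is needed because entropy-genericity is only asserted for $\alpha\in[0,h(X,G))$. When $h_\mu(X,G)=h(X,G)$, the endpoint $\alpha=h(X,G)$ is excluded. This is why the second case has the half-open interval.

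The main obstacle I anticipate is the degenerate situation where residual does not force nonemptiness, for instance if $\mathcal M^\alpha(X,G)$ failed to be a Baire space. I would handle it by the density reading above, noting that residual sets in the intended sense are dense. The remaining steps are routine.
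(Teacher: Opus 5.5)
Your argument is correct, and it reaches the closed endpoint more directly than the paper does. The paper first handles $0\le\alpha<h_\mu(X,G)$ exactly as you do: residuality of $\mathcal M_e(X,G,\alpha)$ in $\mathcal M^\alpha(X,G)$ forces it to meet the relatively open set $B(\mu,\eta)\cap\mathcal M^\alpha(X,G)$, which contains $\mu$. Then, when $h_\mu(X,G)<h(X,G)$, it obtains the endpoint separately. It chooses $\mu_0$ with $h_{\mu_0}(X,G)>h_\mu(X,G)$, passes to $\mu'=(1-\frac{\eta}{D^*})\mu+\frac{\eta}{D^*}\mu_0\in U$, uses affinity of the entropy map to get $h_{\mu'}(X,G)>h_\mu(X,G)$, and reruns the first step at $\mu'$.

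You instead note that $\alpha=h_\mu(X,G)$ is already an admissible level, since it is $<h(X,G)$. You also note that $\mu\in\mathcal M^{h_\mu(X,G)}(X,G)$, because the defining condition $h_\nu(X,G)\ge\alpha$ is non-strict. So the endpoint needs no separate step, and neither the auxiliary measure $\mu_0$ nor affinity of entropy is used. The paper's detour buys robustness: it would still work if $\mathcal M^\alpha(X,G)$ were defined by the strict inequality $h_\nu(X,G)>\alpha$. That non-strict inequality is the one place where your argument depends on the exact form of the definition.

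Your caveat about the Baire property is reasonable, but it applies equally to the paper, which passes from ``residual'' to ``meets every nonempty relatively open subset'' without comment. In the only application (Corollary~\ref{cor:1.5}) the entropy map is upper semi-continuous, so $\mathcal M^\alpha(X,G)$ is compact and hence Baire. Your ``dense $G_\delta$'' reading also matches how residuality is actually produced in Proposition~\ref{pro:4.2}.
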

\begin{proof}
	There exists $\eta>0$ such that
	$B(\mu,2\eta)\subset U$.
	
	Suppose that
	\[
	0\leq\alpha<h_\mu(X,G)\leq h(X,G).
	\]
	Then $\mathcal M_e(X,G,\alpha)$ is residual in
	$\mathcal M^\alpha(X,G)$, and hence it has nonempty intersection
	with the open subset
	$B(\mu,\eta)\cap\mathcal M^\alpha(X,G)$
	of $\mathcal M^\alpha(X,G)$. Thus, there exists an ergodic
	measure
	\[
	\nu\in B(\mu,\eta)\cap\mathcal M^\alpha(X,G)\subset U
	\]
	such that
	$h_\nu(X,G)=\alpha$.
	It follows that
	\begin{align}\label{4.1}
	\mathcal H(X,G,U)\supset[0,h_\mu(X,G)).
	\end{align}
	Suppose that
	$h_\mu(X,G)<h(X,G)$.
	By the variational principle, there exists
	$\mu_0\in\mathcal M(X,G)$ such that
	$h_{\mu_0}(X,G)>h_\mu(X,G)$. In particular, $D^*>0$. Shrinking $\eta$ if necessary, we may
assume that
$0<\eta<D^*$. Denote
	\[
	\mu'
	:=
	\left(1-\frac{\eta}{D^*}\right)\mu
	+
	\frac{\eta}{D^*}\mu_0
	\in B(\mu,2\eta)\subset U.
	\]
 We still have
	$h_{\mu'}(X,G)>h_\mu(X,G)$.
Since $\mu'\in U$, applying the argument leading to
\eqref{4.1} with $\mu'$ in place of $\mu$, we obtain
	\[
	\mathcal H(X,G,U)
	\supset
	[0,h_{\mu'}(X,G))
	\supset
	[0,h_\mu(X,G)].
	\]
\end{proof}
\section*{Acknowledgments}
The second author was supported by the
National Natural Science Foundation of China (No.12471184).
The third author was supported by  Qinglan Project of Jiangsu Province of China.

\end{document}